\DeclareMathOperator{\Guill}{Guill}
\DeclareMathOperator{\PatternShapes}{PS}
\DeclareMathOperator{\Mat}{Mat}
\DeclareMathOperator{\boundaryweights}{bw}
\newcommand{\Schur}{S}
\newcommand{\South}{\mathsf{S}}
\newcommand{\North}{\mathsf{N}}
\newcommand{\West}{\mathsf{W}}
\newcommand{\East}{\mathsf{E}}
\newcommand{\ee}{\mathbf{e}}
\newcommand{\toep}{\mathsf{T}}
\newcommand{\hank}{\mathsf{H}}
\newcommand{\Fourier}{\mathcal{F}}
\newcommand{\fold}{\mathsf{F}}
\newcommand{\CPsiInv}{\mathbf{C}}
\newcommand{\MarkovWeight}[1]{\mathsf{#1}}
\newcommand{\patterntype}[1]{\mathrm{\mathbf{#1}}}
\newenvironment{assumptionp}[1]{
	
	\assumptionalt
}{\endassumptionalt}
\tikzstyle{guillpart} = [scale=0.5,baseline={(current bounding box.center)}]
\tikzstyle{guillfill}=[lightgray,opacity=0.5]
\tikzstyle{guillsep} = [ultra thick]
\tikzstyle{centerline} = [baseline={(current bounding box.center)}]
\title[Operads and boundary conditions for Gaussian Markov fields]{Operadic structure of boundary conditions for two-dimensional Markovian Gaussian random fields}
\author{Emilien \textsc{Bodiot} et Damien \textsc{Simon}}
\address{Laboratoire de Probabilité, Statistique et Modélisation (LPSM, UMR8001), Sorbonne Université, 4 place Jussieu, 75005 Paris}
\email{bodiot@lpsm.paris, damien.simon@lpsm.paris}
\date{2024/07/04}
\begin{abstract}
The theory of Markov processes on the square lattice has been given recently a new algebraic description based on operads. This approach allows for a local description of invariant boundary conditions and provide infinite-volume Gibbs measures out of solutions of local constraints. This theory comes with new algebraic definitions which have not been constructed on any non trivial model yet: the present paper realizes all the operadic computations in the case of Gaussian Markov fields on the square lattice. In particular, we write down and solve all the explicit equations satisfied by the boundary eigen-elements in the operadic sense. As much as possible, we insist on the method and not on the Gaussian context so the approach can be adapted to other models. Then, we relate the boundary eigen-elements to other traditional methods such as the transfer matrix through new tools such as folding and square associativity; we also show that these new elements contain more information than the traditional ones.
\end{abstract}
\begin{document}
	\maketitle
	
	\tableofcontents

\section{Introduction}
\subsection{General presentation}
\paragraph*{Motivations and background}
In the sixth chapter of \cite{friedlivelenik} about the construction infinite-volume Gibbs measures in statistical mechanics, Friedli and Velenik explain page~251 "why not simply use Kolmogorov's extension theorem" for these constructions. Their explanation highlights some difficulties but does not completely discard such an approach: in addition to the Hamiltonian of the system, one must provide additional compatible boundary conditions related to the infinite-volume measures. This task is precisely the purpose of the present paper in the framework of Gaussian Markovian processes on the square lattice, for which computations can be made exactly and can serve as a guideline for the study of other more relevant models of statistical mechanics. This task of providing compatible sets of boundary conditions is made possible by using the abstract framework of \cite{Simon} and the present paper is the first non-trivial example of these abstract constructions and is thus a proof of concept that the operadic approach \cite{Simon} provides concrete sets of equations that can be solved exactly in concrete models. 

In general, without Kolmogorov's extension theorem, infinite volume Gibbs measures are obtained in an \emph{analytical} way in the following manner: firstly, finite size systems are prepared using suitable boundary conditions guessed to approximate the asymptotic ones, secondly analytical bounds are needed to ensure the convergence of observables to some limits, thirdly the use of Riesz representation theorem is used to obtain the existence of the infinite volume measures. Although it works beautifully for many simple models for which physical intuition is already present to prepare the finite systems, the convergence of observables can also be painful and may require technical inequalities. Kolmogorov's extension theorem, on the other hand, do not require any analytical bounds but only the identification of consistent contraints between boundary conditions: guessing directly these consistent constraints is obviously even much harder than guessing approximate ones as in the analytical way but, as seen below, this is essentially due to a lack of an algebraic understanding of the boundary conditions and their consistence.

\paragraph*{Content of the paper}

After an introduction on the Gaussian Markov processes and the associated operadic theory, we show in section~\ref{sec:fixedpoints} that the approach introduced in \cite{Simon} produces, in the Gaussian setting on the discrete lattice through theorems~\ref{theo:westfixedpoint} and \ref{theo:corner:charactblocks}, sets of local equations on so-called \emph{boundary eigen-elements} on half-strips and corners. Solving these equations produces immediately the consistent constraints required by Kolmogorov's extension theorem: this is described by our main theorem~\ref{theo:kolmogorovwithboundaryweights}. These sets of local equations are sufficiently explicit to be solved with finite-dimensional matrices and explicit recursions, by hand or on a computer.

Then, in section~\ref{sec:foldingsetc}, we show that the boundary eigen-elements obtained in the previous two theorems admit  alternative representations through theorems~\ref{theo:hs:fixedpoint:formulae} and \ref{theo:corner:fixedpoint:formulae}. These representations show that these eigen-elements can be \emph{partly} recovered from the classical technique of the transfer matrices through a technique named folding (see figure~\ref{fig:fold:fromhalfplanetocorner}). Moreover, theorem~\ref{theo:gluingtodoublyinf} shows that the eigen-elements also allows for the reconstruction of the two transfer matrices (one in each direction). However, all the parts of the eigen-elements related to both dimensions simultaneously can be obtained only by the operadic method.

As illustrated in figure~\ref{fig:admissiblepatterns}, the operadic approach rebuilds the classical extended objects such as the transfer matrix out of small and local objects, as in section~\ref{sec:fixedpoints} (bottom to top arrow in the figure) where the classical transfer matrix can be used (foldings) to extract part of the local objects, as in section~\ref{sec:foldingsetc} (top to bottom incomplete arrow).

A final section~\ref{sec:complementary} contains reminders on Schur complements and the one-dimensional processes used in the proofs, as well as complementary operadic constructions necessary to fully match with \cite{Simon}.

\paragraph*{Presentation of the results}

Gaussian Markov processes are well-known and their infinite-volume Gibbs measures are well characterized \cite{georgii}. The probabilistic content of the paper is to provide other methods, based on operads, of construction of these measures through theorem~\ref{theo:kolmogorovwithboundaryweights}. We largely insist on the fact that the method presented here is much more general than the pure Gaussian settings: therefore, we present the arguments in a form that depends as little as possible on purely Gaussian properties, in order to be reusable in other contexts. Of course, the reader who focuses on the Gaussian computations will recognize at various stages correspondence with other well-known properties (such as Green functions of random walks or harmonic functions on the lattice).  

In particular, we structure the proofs of the theorems along the precise concepts and tools that they use and that we think to be applicable to other models. In particular, a special role is played by the analytic folding of section~\ref{sec:fold} (related to the guillotine geometry) as well as the square associativity which is part of the definition of the guillotine operad. This square associativity is presented as a systematic tool of guessing of remarkable identities for the concrete computations.

\subsection{Description of the model}

We consider the lattice $\setZ^2\subset \setR^2$ seen as a planar graph with vertices $V=\setZ^2$, horizontal edges  $E_h=\{[k,k+1]\times\{l\}; k,l\in\setZ\}$, vertical edges $E_v= \{\{k\}\times[l,l+1]; k,l\in\setZ\}$ and faces $F=\{ [k,k+1]\times[l,l+1]; k,l\in\setZ\}$. A \emph{domain} (resp. finite domain) of $\setZ^2$ is a sub-planar graph of $\setZ^2$ obtained by considering a subset (resp. a finite subset) of the set $F$ of faces and all the edges and vertices contained in these faces. The boundary $\partial D$ of a domain $D$ is defined as the set of edges that belongs to exactly one face. The set of horizontal (resp. vertical) faces of a domain $D$ is written $E_h(D)$ (resp. $E_v(D)$). For a given face $[k,k+1]\times[l,l+1]$, the edge $[k,k+1]\times \{l\}$ (resp. $[k,k+1]\times \{l\}$, $\{k\}\times [l,l+1]$ and $\{k\}\times [l,l+1]$) is called the \emph{South} (resp. \emph{North}, \emph{West}, \emph{East}) edge, abbreviated with $\South$ (resp. $\North$, $\West$, $\East$) allthrough the paper.

We introduce two real or complex finite-dimensional Hilbert spaces $\ca{H}_1$ and $\ca{H}_2$ and attach to each horizontal edge in $E_h$ a vector in $\ca{H}_1$ and to each vertical edge in $E_v$ a vector in $\ca{H}_2$ through fields $X^{(1)} : E_h\to \ca{H}_1$ and $X^{(2)} : E_v \to \ca{H}_2$. These vectors are chosen at random by using a centred normal distribution with a density w.r.t~the Lebesgue measure on the spaces $\ca{H}_i$ (in particular we consider only Gaussian laws with full support in $\ca{H}_i$ such that the covariance matrix is positive definite).We are interested in random fields $X$ on domains $D$ with the Markov property, i.e.~such that, given any partition of $D$ into two domains $D_1$ and $D_2$, the two restrictions of $X$ to each sub-domains are independent \emph{conditionally} to the values of the field on the boundary of the sub-domains. For a Gaussian process on the edges of $D$, this corresponds (see \cite{Simon}) to the following definition.

\begin{defi}\label{def:GaussMarkovProc:via:density}
	Let $\ca{H}_1$ and $\ca{H}_2$ be two finite-dimensional Hilbert spaces. Let $D$ be a finite domain of $\setZ^2$. A $(
	\ca{H}_1,\ca{H}_2)$-valued homogeneous Gaussian Markov on $D$ is a process $(X_e)_{e\in E(D)}$ on some probability space $(\Omega,\ca{F},\Prob)$ such that:
	\begin{enumerate}[(i)]
		\item for any $e\in E_h(D)$, $X_e$ is a $\ca{H}_1$-valued random variable;
		\item for any $e\in E_v(D)$, $X_e$ is a $\ca{H}_2$-valued random variable;
		\item there exists a Hermitian positive definite operator $Q$ on $\ca{H}_1\oplus\ca{H}_1\oplus\ca{H}_2\oplus\ca{H}_2$ such that the conditional law of $(X_e)_{e\in E(D)}$ knowing $(X_e)_{e\in \partial D}$ admits the following density $f_{D,Q}$ with respect to the product Lebesgue measure on the internal edges $e\in E(D)\setminus\partial D$:
		\begin{equation}\label{eq:def:gaussmarkovinv}
			f_{D,A,x_{\partial D}}( x_{E(D)\setminus \partial D} )
			= \frac{1}{Z_D(Q; x_{\partial D})}
			\prod_{f\in F(D) } \exp\left( - \frac{1}{2}x^*_{\partial f} Q x_{\partial f}^{} \right)
		\end{equation}
	\end{enumerate}
\end{defi}
In order to have notations that can be easily interpreted, any variable associated to a horizontal (resp. vertical) edge will be noted $X^{(1)}_e$ and $X^{(2)}_e$. Whenever a coordinate system is needed, the $\ca{H}_1$-valued (resp. $\ca{H}_2$-valued) variable $X_{[k,k+1]\times\{l\}}$ (resp. $X_{\{k\}\times[l,l+1]}$) will be noted $X^{(1)}_{k,l}$ (resp. $X^{(2)}_{k,l}$), where $(k,l)$ corresponds to the left (resp. bottom) vertex of the edge.

The partition function $Z_D(Q; x_{\partial D})$ is obtained as the integral over all the values attached to interior edges of the domain $D$ of the product of exponentials and is thus given by
\begin{equation}\label{eq:partitionfunctionasquadraticform}
	Z_D(Q; x_{\partial D} )= \alpha_{Q,D} \exp\left( -\frac{1}{2}x^*_{\partial D} \ti{Q}_D x_{\partial D}^{}\right)
\end{equation}
where $\ti{Q}_D$ is a quadratic form on the boundary variables $(x_e)_{e\in\partial D}$.

Obtaining the complete law of the process requires to take the expectation w.r.t.~the boundary variables $x_{\partial D}$ whose law is written $\nu_{\partial D}$. For deterministic boundary conditions, $\nu_{\partial D}$ is a product of Dirac measure on each edge and is not absolutely continuous w.r.t.~the Lebesgue measure on the spaces $\ca{H}_i$. If the boundary law $\nu_{\partial D}$ admits a density  w.r.t.~the product Lebesgue measure, then the complete law of the process admits a density 
$ g_D(x_{\partial D})f_{D,A,x_{\partial D}}(x_{E(D)\setminus\partial D})/Z_D^{\boundaryweights}(Q;g_D)$
whose partition function is given by
\[
Z_D^{\boundaryweights}(Q;g_D)=\int g_D(x_{\partial D}) Z_D(Q;x_{\partial D}) dx_{\partial D}.
\]
and satisfy the Markov property.
\begin{prop}[Markov property]
	For any partition of the domain $D$ into subdomains $(D_1,\ldots,D_n)$, the $n$ processes $(X_e)_{e\in E(D_i)}$, $1\leq i\leq n$, are independent conditionally to the r.v. $(X_e)_{e\in \cup_i \partial D_i}$.
\end{prop}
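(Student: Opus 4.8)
The plan is to reduce the statement to the factorization of the joint density in \eqref{eq:def:gaussmarkovinv} and then to invoke the standard factorization criterion for conditional independence, handling a general number $n$ of subdomains at once rather than by induction. The first step is to observe that a partition of $D$ into subdomains $(D_1,\dots,D_n)$ is, by definition, a partition of the face set, $F(D) = \bigsqcup_{i=1}^n F(D_i)$; since the density \eqref{eq:def:gaussmarkovinv} is a product of exactly one factor per face, it splits as
\[
\prod_{f\in F(D)} \exp\left(-\tfrac{1}{2} x_{\partial f}^* Q x_{\partial f}^{}\right) = \prod_{i=1}^n W_i\bigl(x_{E(D_i)}\bigr),
\qquad
W_i\bigl(x_{E(D_i)}\bigr) := \prod_{f\in F(D_i)} \exp\left(-\tfrac{1}{2} x_{\partial f}^* Q x_{\partial f}^{}\right),
\]
where each $W_i$ depends only on the variables carried by the edges of $D_i$.

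Next I would analyze the combinatorics of the edges relative to the conditioning set $\mathcal{B} := (X_e)_{e\in\bigcup_i\partial D_i}$. Each edge of $E(D_i)$ is either internal to $D_i$ (it lies in two faces of $D_i$) or belongs to $\partial D_i$. The key geometric observation is that the variables \emph{not} conditioned upon are exactly those on the edges internal to the subdomains, and that these form a \emph{disjoint} union
\[
E(D)\setminus \bigcup_{i}\partial D_i = \bigsqcup_{i=1}^n \bigl(E(D_i)\setminus\partial D_i\bigr),
\]
because an edge internal to $D_i$ lies in two faces of $D_i$ and hence, by the partition of faces, cannot be an edge of any face of another subdomain, while any edge shared between two subdomains lies on the boundary of each and therefore sits in $\bigcup_i\partial D_i$. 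In particular $\partial D\subseteq\bigcup_i\partial D_i$, so the conditioning on $x_{\partial D}$ in \eqref{eq:def:gaussmarkovinv} (and, for the full law, any boundary weight $g_D(x_{\partial D})$) is a function of the conditioning variables alone.

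Finally I would conclude by the density factorization criterion. Writing $Z_i := (X_e)_{e\in E(D_i)\setminus\partial D_i}$ for the internal variables of $D_i$ and $Y := \mathcal{B}$, the two steps above show that the joint density takes the form $h(y)\prod_{i=1}^n W_i(z_i,y)$, where each factor $W_i$ depends only on the single block $Z_i$ and on $Y$, and where the blocks $Z_i$ are carried by pairwise disjoint edge sets. Since $Q$ is positive definite the Gaussian density is everywhere positive, all conditional laws are genuine densities with respect to Lebesgue measure, and the conditional normalization factorizes over the disjoint integration variables by Fubini; hence the standard criterion applies and forces the $Z_i$ to be mutually independent conditionally on $Y$. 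As $(X_e)_{e\in E(D_i)}$ is the pair consisting of $Z_i$ and the $\sigma(Y)$-measurable boundary part $x_{\partial D_i}$, conditional independence of the $Z_i$ upgrades at once to conditional independence of the processes $(X_e)_{e\in E(D_i)}$. The only genuinely non-formal point, which I expect to be the main thing to justify carefully, is the combinatorial claim of the second step; it rests on the elementary fact that each edge of $\setZ^2$ borders exactly two faces, so that an edge internal to $D_i$ belongs to no other $E(D_j)$, everything else being the routine factorization lemma.
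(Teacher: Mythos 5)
Your proof is correct, and it takes precisely the route the paper intends: the paper gives no written proof at all, merely asserting just before the proposition that "the representation in \eqref{eq:def:gaussmarkovinv} as a product of face weights ensures automatically the Markov property," and your argument — the face-wise factorization, the combinatorial fact that internal edge sets of the $D_i$ are pairwise disjoint with all shared edges lying in $\bigcup_i\partial D_i$, and the standard factorization criterion for conditional independence — is exactly the detail behind that assertion. Your identification of the edge-combinatorics step as the only non-formal point is also apt, and your handling of it (each edge of $\setZ^2$ borders exactly two faces) is correct.
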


The face operator $Q$ has a block decomposition
\begin{equation}\label{eq:block_decompo_face_weight}
	Q = \begin{pmatrix}
		Q_{\South\South} & Q_{\South\North}  & Q_{\South\West} & Q_{\South\East} \\
		Q_{\North\South} & Q_{\North\North}  & Q_{\North\West} & Q_{\North\East} \\
		Q_{\West\South} & Q_{\West\North}  & Q_{\West\West} & Q_{\West\East} \\
		Q_{\East\South} & Q_{\East\North}  & Q_{\East\West} & Q_{\East\East} 
	\end{pmatrix}
\end{equation}
on the face space $\ca{H}_1\oplus\ca{H}_1\oplus\ca{H}_2\oplus\ca{H}_2$ where the first (resp. second, third and fourth) space corresponds to the variable on the South (resp. North, West and East) edge of the face (see figure \ref{fig:Qblockstruct}). The Hermitian property then requires $Q_{ab}=Q_{ba}^*$. We will often need to refer to the following sub-blocks of $Q$ and introduce the following block extraction notation, used all through the paper.
\begin{align}\label{eq:block_decompo_partielles}
	Q_{[ab],[ab]}&=\begin{pmatrix}
		Q_{aa} & Q_{ab}\\
		Q_{ba} & Q_{bb}
	\end{pmatrix} &  Q_{[ab],[c]}&=\begin{pmatrix}
		Q_{ac}\\
		Q_{bc}
	\end{pmatrix}
\end{align}
for $a$, $b$, $c\in\lbrace\South, \North, \West, \East\rbrace$.
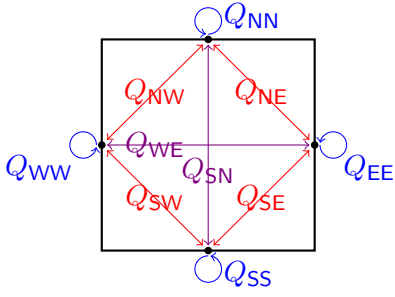
\begin{figure}
	\begin{center}
		\begin{tikzpicture}[scale=0.7]
			\draw[thick] (0,0) rectangle (4,4);
			\node at (2,0) [fill, circle, inner sep=1pt] {};
			\node at (2,4) [fill, circle, inner sep=1pt] {};
			\node at (0,2) [fill, circle, inner sep=1pt] {};
			\node at (4,2) [fill, circle, inner sep=1pt] {};
			
			\draw[->,blue] (-0.1,2) arc (0:350:0.25);
			\node at (-0.35,2) [below left,blue] {$Q_{\West\West}$};
			\draw[->,blue] (4.1,2) arc (-180:170:0.25);
			\node at (4.35,2) [below right,blue] {$Q_{\East\East}$};
			\draw[->,blue] (2,-0.1) arc (90:440:0.25);
			\node at (2.1,-0.45) [right,blue] {$Q_{\South\South}$};
			\draw[->,blue] (2,4.1) arc (-90:260:0.25);
			\node at (2.1,4.45) [right,blue] {$Q_{\North\North}$};
			
			\draw[<->,violet] (2,0.1) --(2,3.9);
			\draw[<->,violet] (0.1,2)--(3.9,2);
			\node at (1,2) [violet] {$Q_{\West\East}$};
			\node at (2,1.5) [violet] {$Q_{\South\North}$};

			\draw[<->,red] (2.1,0.1)-- node [midway] {$Q_{\South\East}$} (3.9,1.9);
			\draw[<->,red] (1.9,0.1)-- node [midway] {$Q_{\South\West}$} (0.1,1.9);
			\draw[<->,red] (2.1,3.9)-- node [midway] {$Q_{\North\East}$} (3.9,2.1);
			\draw[<->,red] (1.9,3.9)-- node [midway] {$Q_{\North\West}$} (0.1,2.1);
		\end{tikzpicture}
	\end{center}
	\caption{\label{fig:Qblockstruct}Block structure of the matrix $Q$.}
\end{figure}

The present paper provides the \emph{explicit computation} from a purely algebraic perspective based on operads of a consistent translation-invariant family of boundary weights $g_{p,q}$ on rectangular domains in the sense of Kolmogorov's extension theorem (see theorem~\ref{theo:kolmogorovwithboundaryweights} below), i.e. such that, for any rectangles $R'$ and $R$ with $R'\subset R$ and sizes $(p',q')$ and $(p,q)$:
\[
\int g^{(\Lambda)}_{p,q} (x_{\partial R}) Z_{R\setminus R'}(Q;x_{\partial R\cup \partial R'}) dx_{R\setminus (R'\cup \partial R)} =\Lambda^{pq-p'q'} g^{(\Lambda)}_{p',q'}\left( x_{\partial R'} \right)
\]
for some $\Lambda\in\setR_+$. This condition is much more general than the case of Gaussian fields but, in the present case, one observes that $g^{(\Lambda)}_{p,q}$ is itself a Gaussian weight 
\begin{equation}\label{eq:boundaryweight:quadraticform}
	g^{(\Lambda)}_{p,q}(x_{\partial R}) = \beta_{p,q} \exp\left(-\frac{1}{2}x^*_{\partial R} Q^{\partial}_{p,q} x_{\partial R}\right)
\end{equation}
for a suitable quadratic form $Q^{\partial}_{p,q}$ to be determined. In this case, we have that for any rectangle $R$ of size $(p,q)$, 
\[
\frac{\log Z_R^{\boundaryweights}(Q;g_R) }{pq} = f = \log \Lambda
\]
where $f$ is the so-called density of free energy. We emphasize that, in the present case, there is no approximation of the boundary condition by a guessed one but rather the obtention followed by the solution of an explicit system of equations that fully determines the weights $g^{(\Lambda)}_{p,q}$.

\subsection{Alternative analytical known results}

Before entering the algebraic setting, we quickly summarize some of the classical analytical approaches to the finite and infinite volume Gibbs measures. From a probabilist point of view, it is well known that the constant $\alpha_{Q,d}$ as well as the quadratic form $\ti{Q}_D$ of \eqref{eq:partitionfunctionasquadraticform} and the quadratic form $Q^{\partial}_{p,q}$ of \eqref{eq:boundaryweight:quadraticform} can be represented as Green functions of random walks and/or restriction of random walks cut when they enter or leave a finite or infinite domain. It is interesting to see that this result will be recovered from a purely operadic way below, which may work also for other types of models.

A first classical analytical way of obtaining the infinite volume Gibbs measure of our Gaussian Markovian model may consider  Dirichlet boundary conditions (or another fixed well-guessed boundary conditions) on an increasing family of finite domains and study the convergence of the local observables \cite{friedlivelenik} to a limit and use afterwards Riesz theorem. This involves limits that are easy for Gaussian models but may be difficult for other processes; moreover fixed deterministic boundary conditions may not be the most useful approximate choice to start a convergence scheme. The analytical part as well as the choice or guessing of suitable approximate boundaries are fully avoided in the present construction.

A second analytical way consists in using the Fourier transform, for example on the finite discrete torus or directly on $\setZ^2$ with white noises, to map the quadratic form on the torus or the plane to a diagonal one and then map the model to independent Gaussian variables. This is almost trivial but the locality of the Markovian field and the Markov property are completely hidden in this framework of Fourier transform, which is a global concept. Our algebraic approach allows to stick to local operations during all the constructions. 

\subsection{Quick sketch of the operadic approach to boundary weights}

The weights $g_{p,q}^{(\Lambda)}$ and their quadratic forms $Q^{\partial}_{p,q}$ have varying dimensions when $p$ and $q$ vary and hence are not easily comparable. This situation is very different from the 1D case in which the boundary of a segment is made of two points regardless of the length of the segments. It is non-trivial in dimension two to write eigenvalue/eigenvector equations directly on $g_{p,q}^{(\Lambda)}$: \cite{Simon} splits this weight into elementary objects which do not depend on the size $(p,q)$ and satisfy such eigenvalue/eigenvectors equations. This ensures the consistency of boundary weights $g_{p,q}^{(\Lambda)}$ and the construction of the Gibbs measure through Kolmogorov's extension theorem in our main theorem~\ref{theo:kolmogorovwithboundaryweights}. The splitting of  \cite{Simon} corresponds to a decomposition
\begin{subequations}\label{eq:boundaryweight:internalstruct}
	\begin{equation}
		g_{p,q}^{(\Lambda)}\left(
		\begin{tikzpicture}[yscale=0.25,xscale=0.4,baseline={(current bounding box.center)}]
			\draw (0,0) rectangle (4,3);
			\node at (0.5,0) [anchor = north] {$x_1$};
			\node at (1.5,0) [anchor = north]{$x_2$};
			\node at (2.5,0) [anchor = north]{$\ldots$};
			\node at (3.5,0) [anchor = north]{$x_{p}$};
			\node at (0.5,3) [anchor = south] {$y_1$};
			\node at (1.5,3) [anchor = south]{$y_2$};
			\node at (2.5,3) [above]{$\ldots$};
			\node at (3.5,3) [above]{$y_{p}$};
			\node at (0,0.5) [anchor = east]{ $w_1$ };
			\node at (0,1.5) [anchor = east]{ $\vdots$ };
			\node at (0,2.5) [anchor = east]{ $w_{q}$ };
			\node at (4,0.5) [anchor = west]{ $z_1$ };
			\node at (4,1.5) [anchor = west]{ $\vdots$ };
			\node at (4,2.5) [anchor = west]{ $z_{q}$ };
		\end{tikzpicture}
		\right)
		= \omega\left( \mathbf{A}_\South(x) U_{\South\East} \mathbf{A}_{\East}(z) U_{\North\East} \mathbf{A}_{\North}(y)U_{\North\West} \mathbf{A}_\West(w) \right)
	\end{equation}
	with the four side elements
	\begin{align}
		\mathbf{A}_\South(x) &= A_\South(x_1)\cdot_\South\ldots \cdot_\South A_\South(x_p) &
		\mathbf{A}_\East(z) &= A_\East(z_1)\cdot_\East\ldots \cdot_\East A_\East(z_q)
	\end{align}
	\begin{align}
		\mathbf{A}_\North(y) &= A_\North(y_p)\cdot_\North\ldots \cdot_\North A_\North(y_1) &
		\mathbf{A}_\West(w) &= A_\West(w_q)\cdot_\West\ldots \cdot_\West A_\West(w_1) 
	\end{align}
\end{subequations}
where the $\mathbf{A}_a(u)$ are operators in some suitable associative structure, the $U_{ab}$ are bimodule-like objects on these structures and $\omega$ is a tracial-like state. Then, the notion of eigen-element up to morphisms can be defined (see \cite{Simon}) for these objects. The following sections present, through theorems~\ref{theo:westfixedpoint}, \ref{theo:corner:charactblocks}, \ref{theo:hs:fixedpoint:formulae} and \ref{theo:corner:fixedpoint:formulae} how these abstract definitions provide computational tools that determine these operators, at least for Gaussian models. Treating all the spaces, products and types of boundary eigen-elements simultaneously require the formalism of operads of \cite{Simon} that we now quickly present.

\section{The operadic approach: lifts, shifts and fixed points}\label{sec:fixedpoints}

\subsection{The guillotine operad for Gaussian Markovian processes on finite rectangles}

\subsubsection{Finite domains}
The theory in \cite{Simon} is based on the guillotine operad $\Guill_2$ which provides an algebraic setting for the computation of partition function and observables when rectangular domains are glued together. The operad is a coloured operad whose colours are given by the shapes $(p,q)$ of the rectangles that are glued along their sides (guillotine cuts). 

We summarize quickly in the following theorem the main construction of \cite{Simon} for finite rectangles and invite the reader to read simultaneously the abstract definitions below and their explicit realizations in the Gaussian case in section~\ref{sec:gaussianlift}.

\begin{theo}[see \cite{Simon}]\label{theo:guillstruct}
	For all $n\in\setN$ and any sequence of colours $(c,c_1,\ldots,c_n)\in \PatternShapes^{n+1}$ with $\PatternShapes=\setN_1^2$ let
	$\Guill_2(c;c_1,\ldots,c_n)$ be the set of equivalence classes of guillotine partitions $(R_1,\ldots,R_n)$ of a rectangle $R$ such that $R$ has size $c$, for all $1\leq i\leq n$, the size of $R_i$ is equal to $c_i$. These sets define a coloured operad $\Guill_2^{(r)}$ over the set of colours $\ov{\PatternShapes}$ with compositions described in \cite{Simon} generated by elementary products in $\Guill_2((p_1+p_2,q);(p_1,q),(p_2,q))$ and in $\Guill_2((p,q_1+q_2);(p,q_1),(p,q_2))$ submitted to three associativity conditions, named horizontal, vertical and square associativities.
\end{theo}

We adopt the graphical notations of \cite{Simon}. For a given $\Guill_2^{(r)}$-algebra $(E_c)_{c\in C}$, the product $m_\rho$ associated to a guillotine partition $\rho =(R_1,R_2,R_3)$ of a rectangle $R$ with shape $c$ evaluated on $(u_1,u_2,u_3)\in E_{c_1}\times E_{c_2} \times E_{c_3}$ produces an element in $E_c$ and is written in one of the three following equivalent ways:
\begin{equation}\label{eq:exampleguillpart}
	m_{\rho}(u_1,u_2,u_3) 
	=
	\begin{tikzpicture}[guillpart,yscale=0.8,xscale=0.8]
		\fill[guillfill] (0,0) rectangle (3,3);
		\draw[guillsep] (0,0) rectangle (3,3);
		\draw[guillsep] (2,0)--(2,3) (0,1)--(2,1);
		\node at (1,0.5) {$1$};
		\node at (2.5,1.5) {$3$};
		\node at (1,2) {$2$};
	\end{tikzpicture}(u_1,u_2,u_3)
	= 
	\begin{tikzpicture}[guillpart,yscale=0.8,xscale=1]
		\fill[guillfill] (0,0) rectangle (3,3);
		\draw[guillsep] (0,0) rectangle (3,3);
		\draw[guillsep] (2,0)--(2,3) (0,1)--(2,1);
		\node at (1,0.5) {$u_1$};
		\node at (2.5,1.5) {$u_3$};
		\node at (1,2) {$u_2$};
	\end{tikzpicture}
\end{equation}
where this drawing corresponds to a rectangle $R$ of size $(3,3)$ with partitions $R_1$ of size $(2,1)$, $R_2$ of size $(2,2)$ and $R_3$ of size $(1,3)$. The elements of the sets $E_c$ are written either as arguments with an order associated to the labelling of the inner rectangles or directly inside the rectangles to shorten the notation.

In the present probabilistic context, we consider the following $\Guill_2^{(r)}$-operad $(L^+_{p,q})_{(p,q)\in\PatternShapes}$ where $(M_1,\ca{M}_1,\mu_1)$ and $(M_2,\ca{M}_2,\mu_2)$ are two $\sigma$-finite measure spaces and, for any $(p,q)\in C$, the set $L^+_{p,q}$ is defined as:
\begin{align*}
	L^+_{p,q} &= \left\{ f: M_1^{2p}\times M_2^{2q} \to \ov{\setR}_+ ; \text{$f$ measurable} \right\}
\end{align*}
Elements in these sets are called quasi-densities on the boundary of a rectangle $(p,q)$ in the present paper. For the two types of elementary guillotine partitions with a vertical and a horizontal cut
\begin{align*}
	\rho_{p_1,p_2|q} &= \left( [0,p_1]\times [0,q], [p_1,p_1+p_2]\times [0,q]\right)
	\\
	\rho_{p|q_1,q_2} &= \left( [0,p]\times [0,q_1], [0,p]\times [q_1,q_1+q_2]\right)
\end{align*}
of the rectangles $[0,p_1+p_2]\times[0,q]$ and $[0,p]\times[0,q_1+q_2]$, we define
\begin{subequations}\label{eq:def:elemguillotprod:dens}
	\begin{align}
		m_{\rho_{p_1,p_2|q}} = \begin{tikzpicture}[guillpart,yscale=1.,xscale=1.]
			\fill[guillfill] (0,0) rectangle (2,1);
			\draw[guillsep] (0,0) rectangle (2,1)  (1,0)--(1,1);
			\node at (0.5,0.5) {$1$};
			\node at (1.5,0.5) {$2$};
		\end{tikzpicture}: L^+_{p_1,q}\times L^+_{p_2,q} & \to  L^+_{p_1+p_2,q} 
		\\
		m_{\rho_{p|q_1,q_2}} = \begin{tikzpicture}[guillpart,yscale=1.,xscale=1.]
			\fill[guillfill] (0,0) rectangle (1,2);
			\draw[guillsep] (0,0) rectangle (1,2) (0,1)--(1,1);
			\node at (0.5,0.5) {$1$};
			\node at (0.5,1.5) {$2$};
		\end{tikzpicture}: L^+_{p,q_1}\times L^+_{p,q_2} & \to  L^+_{p,q_1+p_2} 
	\end{align}
	through the following integrals:
	\begin{align}
		m_{\rho_{p_1,p_2|q}}&(f_1,f_2)(u_\South^{(1)},u_\South^{(2)},u_\North^{(1)},u_\North^{(2)},u_\West,u_\East)
		\\
		&=
		\int_{M_2^q} f_1(u_\South^{(1)},u_\North^{(1)},u_\West,v)f_2(u_\South^{(2)},u_\North^{(2)},v,u_\East)d\mu_2^{\otimes q}(v)	\nonumber
		\\
		m_{\rho_{p|q_1,q_2}}(f_1,f_2)&(u_\South,u_\North,u_\West^{(1)},u_\West^{(2)},u_\East^{(1)},u_\East^{(2)})
		\\
		&= 
		\int_{M_1^p} f_1(u_\South,v,u_\West^{(1)},u_\East^{(1)}) f_2(v,u_\North,u_\West^{(2)},u_\East^{(2)}) d\mu_1^{\otimes p}(v)\nonumber
	\end{align}
\end{subequations}
with $u_a^{(i)}\in M_1^{p_i}$ and $u_a\in M_1^p$ for $a\in \{\South,\North\}$ and $u_b^{(i)}\in M_2^{q_i}$ and $u_b\in M_2^q$ for $b\in \{\West,\East\}$. These two products multiply the quasi-densities and integrate over the variables associated to the common boundary of the two small rectangles and produce a quasi-density on the large rectangle.
\begin{theo}[see \cite{Simon}]\label{theo:L:guillstruct}
	The elementary guillotine products \eqref{eq:def:elemguillotprod:dens} define a $\Guill_2^{(\patterntype{r})}$-structure on the sets $(L^+_{p,q})_{(p,q)\in \PatternShapes}$. 
\end{theo}
This structure theorem is proved in \cite{Simon} and reinterprets various probabilistic computations of marginals and partition functions of Markov processes that we will use massively in the following sections. As an example, we have in the case of \eqref{eq:exampleguillpart}:
\[
Z_{R}(Q;\cdot) = 	\begin{tikzpicture}[guillpart,yscale=1.2,xscale=3]
	\fill[guillfill] (0,0) rectangle (3,3);
	\draw[guillsep] (0,0) rectangle (3,3);
	\draw[guillsep] (2,0)--(2,3) (0,1)--(2,1);
	\node at (1,0.5) {$Z_{R_1}(Q,\cdot)$};
	\node at (2.5,1.5) {$Z_{R_3}(Q;\cdot)$};
	\node at (1,2) {$Z_{R_2}(Q;\cdot)$};
\end{tikzpicture}
\]
Here, the choice of $L^+$ spaces with $\ov{\setR}_+$ as a target space prevents these spaces to have a vector space structure (and thus eigenvalues) but it remains the quickest way to perform the Gaussian integration with arbitrary quadratic forms. These difficulties are irrelevant here for full support Gaussian processes on rectangles using the lifts defined below that are based Schur complements; however they will pop up again for the boundary eigen-elements and will require a careful treatment. 

\subsubsection{Extensions to boundaries}

In order to deal with boundary weights with an internal structure as announced in \eqref{eq:boundaryweight:internalstruct}, \cite{Simon} introduces extended partition functions to deal with arbitrary guillotine partitions of the full discrete lattice $\setZ^2$ and not only finite rectangles. This is done by extending the additive colour palette of the operads to \[
\ov{\PatternShapes}=\left( \setN_1\cup \{\infty_L,\infty_R,\infty_{LR} \} \right)^2
\]
where $\PatternShapes$ stands for "pattern shapes" and with the following conventions for finite $u$:
\begin{align*}
	u-(-\infty) &= \infty_L,   
	&
	(+\infty)-x &= \infty_R, 
	&
	(+\infty)-(-\infty) &= \infty_{LR} 
\end{align*} 
where $L$ and $R$ stands for left and right on the real line. However, to make notations easier to interpret, we will write any element
\begin{align*}
	(a,\infty_L) &= (a,\infty_\South) 
	&
	(a,\infty_R) &= (a,\infty_\North) 
	\\
	(\infty_L, b) &= (\infty_\West,b) 
	&
	(\infty_R, b) &= (\infty_\East,b) 
	\\
	(a,\infty_{LR}) &= (a,\infty_{\South\North})
	&
	(\infty_{LR},b) &= (\infty_{\West\East},b)
\end{align*}
where left and right are replaced by the direction in the plane in the corresponding direction. A synthetic list of all the extended shapes with their colour is presented in figure~\ref{fig:admissiblepatterns}. A rectangular-like shape with a size equal to $\infty_{\West\East}$ or $\infty_{\South\North}$ in one direction contains a line in this direction and thus requires to be pointed, as done in \cite{Simon} to remove redundancy.  It is an easy exercise to check that two translation-equivalent guillotine partitions are made of rectangular shapes with the same sizes. 

\begin{figure}
	\begin{center}
		{\footnotesize
			\begin{tikzpicture}[yscale=0.75, xscale=0.7]
				\draw[->,blue, thick] (-6.5,10)--(-6.5,3);
				\node at (-6.4,3) [anchor=west,blue,rectangle,draw] {section~\ref{sec:foldingsetc}};
				\draw[->,blue, thick] (6.75,0)--(6.75,10);
				\node at (6.6,10) [anchor=east,blue,rectangle,draw] {section~\ref{sec:fixedpoints}};
				\begin{scope}[yshift=9cm]
					\begin{scope}
						\fill[guillfill] (0,0) rectangle (1,1);
						\node at (0.5,0.5) {$\setZ^2$};	
						\node at (0,0.5) [anchor=east] {Full plane};
						\node at (0.5,0.) [anchor = north] {$(\infty_{\West\East},\infty_{\South\North})$};
					\end{scope}
				\end{scope}
				
				\begin{scope}[yshift=7cm]
					\begin{scope}[xshift= 4.cm]
						\fill[guillfill] (0,0) rectangle (1,1);
						\draw[guillsep] (0,1) -- (1,1);
						\node at (0.5,0.) [anchor = north] {$(\infty_{\West\East},\infty_\South)$};
					\end{scope}
					
					\begin{scope}[xshift= 1.5cm]
						\fill[guillfill] (0,0) rectangle (1,1);
						\draw[guillsep] (0,0) -- (1,0);
						\node at (0.5,0.) [anchor = north] {$(\infty_{\West\East},\infty_\North)$};
					\end{scope}
					
					\begin{scope}[xshift= -1.5cm]
						\fill[guillfill] (0,0) rectangle (1,1);
						\draw[guillsep] (1,0) -- (1,1);
						\node at (0.5,0.) [anchor = north] {$(\infty_\West,\infty_{\South\North})$};
					\end{scope}
					
					\begin{scope}[xshift= -4cm]
						\fill[guillfill] (0,0) rectangle (1,1);
						\draw[guillsep] (0,0) -- (0,1);
						\node at (0,0.5) [anchor = east] {Half-planes};
						\node at (0.5,0.) [anchor = north] {$(\infty_\East,\infty_{\South\North})$};
					\end{scope}
				\end{scope}
				
				\begin{scope}[yshift=4.5cm]
					
					\node at (-2.5,1.5) {Corners};
					\draw (-5.5,1.1)-- (-5.5,1.3) -- (1.5,1.3) -- (1.5,1.1);
					
					\node at (5.,1.5) {Strips};
					\draw (3.5,1.1)-- (3.5,1.3) -- (6.5,1.3) -- (6.5,1.1);
					
					\begin{scope}[xshift= 5.5cm]
						\fill[guillfill] (0,0) rectangle (1,1);
						\draw[guillsep] (0,0) -- (0,1);
						\draw[guillsep] (1,1) -- (1,0);
						\node at (0.5,0.) [anchor = north] {$(p,\infty_{\South\North})$};
					\end{scope}

					\begin{scope}[xshift= 3.5cm]
						\fill[guillfill] (0,0) rectangle (1,1);
						\draw[guillsep] (0,0) -- (1,0); 
						\draw[guillsep] (1,1) -- (0,1);
						\node at (0.5,0.) [anchor = north] {$(\infty_{\West\East},q)$};
					\end{scope}
					
					\begin{scope}[xshift= 0.5cm]
						\fill[guillfill] (0,0) rectangle (1,1);
						\draw[guillsep]  (0,1) -- (1,1) -- (1,0);
						\node at (0.5,0.) [anchor = north] {$(\infty_\West,\infty_\South)$};
					\end{scope}
					
					\begin{scope}[xshift= -1.5cm]
						\fill[guillfill] (0,0) rectangle (1,1);
						\draw[guillsep] (1,1) -- (1,0) -- (0,0);
						\node at (0.5,0.) [anchor = north] {$(\infty_\West,\infty_\North)$};
					\end{scope}
					
					\begin{scope}[xshift= -3.5cm]
						\fill[guillfill] (0,0) rectangle (1,1);
						\draw[guillsep] (1,0) -- (0,0) -- (0,1);
						\node at (0.5,0.) [anchor = north] {$(\infty_\East,\infty_\North)$};
					\end{scope}
					
					\begin{scope}[xshift= -5.5cm]
						\fill[guillfill] (0,0) rectangle (1,1);
						\draw[guillsep] (0,0) -- (0,1) -- (1,1);
						\node at (0.5,0.) [anchor = north] {$(\infty_\East,\infty_\South)$};
					\end{scope}
				\end{scope}

				\begin{scope}[yshift=2cm]
					\begin{scope}[xshift= 3cm]
						\fill[guillfill] (0,0) rectangle (1,1);
						\draw[guillsep] (0,0) -- (0,1) -- (1,1) -- (1,0);
						\node at (0.5,0.) [anchor = north] {$(p,\infty_\South)$};
					\end{scope}
					
					\begin{scope}[xshift= -3cm]
						\fill[guillfill] (0,0) rectangle (1,1);
						\draw[guillsep]  (0,1) -- (1,1) -- (1,0)-- (0,0);
						\node at (-0.5,0.5) [anchor = east] {Half-strips};
						\node at (0.5,0.) [anchor = north] {$(\infty_\West,q)$};
					\end{scope}
					
					\begin{scope}[xshift= 1cm]
						\fill[guillfill] (0,0) rectangle (1,1);
						\draw[guillsep] (1,1) -- (1,0) -- (0,0) -- (0,1);
						\node at (0.5,0.) [anchor = north] {$(p,\infty_\North)$};
					\end{scope}
					
					\begin{scope}[xshift= -1cm]
						\fill[guillfill] (0,0) rectangle (1,1);
						\draw[guillsep] (1,0) -- (0,0) -- (0,1)--(1,1);
						\node at (0.5,0.) [anchor = north] {$(\infty_\East,q)$};
					\end{scope}
				\end{scope}
				
				\begin{scope}
					\fill[guillfill] (0,0) rectangle (1,1);
					\draw[guillsep] (0,0) rectangle (1,1);
					\node at (0,0.5) [anchor = east] {Rectangles};
					\node at (0.5,0.) [anchor = north] {$(p,q)$};
				\end{scope}
			\end{tikzpicture}
		}
	\end{center}
	\caption{\label{fig:admissiblepatterns}(extracted from \cite{Simon}) Six types of admissible patterns, with their pattern shape below. Any pattern in a line may appear in the guillotine partitions of some of the patterns above this line. All finite sizes $p$ and $q$ are elements of $\setN_1$. The left top-to-bottom arrow explains how traditional computations based on Fourier transform and transfer matrices, enhanced with foldings in section~\ref{sec:foldingsetc} can be used to split the plane until blocks for corners and halfstrips. The right bottom-to-top arrow explains how the operadic rules described in section~\ref{sec:fixedpoints}, based on elementary local operations and equations, allows one to reach the full plane Gibbs measure in a very different way.}
\end{figure}
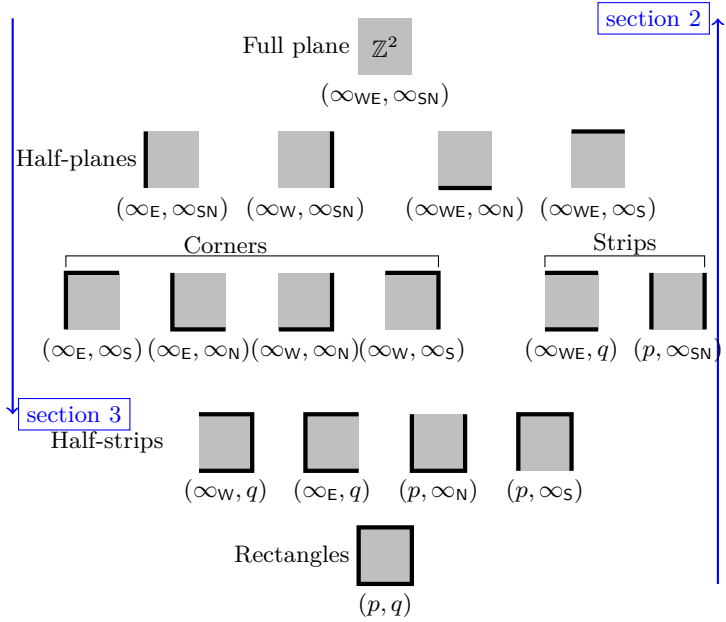

The same theorem as theorem~\ref{theo:guillstruct} remains true when $\PatternShapes$ is replaced by $\ov{\PatternShapes}$ provided guillotine partitions with at least a value $\infty_{LR}$ are pointed. If a rectangular shape is infinite in one direction, the external line is not drawn. For example, the product associated to the elementary guillotine partition 
\[
\rho =(
(-\infty,x-1]\times [y,y+2],
(-\infty,x]\times [y+2,y+3],
[x-1,x]\times [y,y+2]
)
\] applied to $(a,b,c)\in E_{\infty_\West,2}\times E_{\infty_\West,1} \times E_{1,2}$ is represented as
\[
m_\rho(a,b,c) =
\begin{tikzpicture}[guillpart,yscale=0.8,xscale=1]
	\fill[guillfill] (0,0) rectangle (3,3);
	\draw[guillsep] (0,0) --(3,0) --(3,3) --(0,3) (0,2)--(3,2)  (2,0)--(2,2);
	\node at (1,1) {$1$};
	\node at (1.5,2.5) {$2$};
	\node at (2.5,1) {$3$};
\end{tikzpicture}(a,b,c)
= \begin{tikzpicture}[guillpart,yscale=0.8,xscale=1]
	\fill[guillfill] (0,0) rectangle (3,3);
	\draw[guillsep] (0,0) --(3,0) --(3,3) --(0,3) (0,2)--(3,2)  (2,0)--(2,2);
	\node at (1,1) {$a$};
	\node at (1.5,2.5) {$b$};
	\node at (2.5,1) {$c$};
\end{tikzpicture}
\]
For rectangular shapes with a size equal to $\infty_{\West\East}$ or $\infty_{\South\North}$ that require a pointing in this direction, we add a thick point and a dotted line in the drawing. For example, the guillotine partition 
\[
\rho = (\setZ\times [y,y+1], (-\infty,x]\times [y+1,y+3], [x,+\infty)\times [y+1,y+3])
\] of $\setZ\times [y,y+3]$ with a pointing at $x+2$ is represented by
\[
\begin{tikzpicture}[guillpart,yscale=0.8,xscale=1]
	\fill[guillfill] (0,0) rectangle (4,3);
	\draw[guillsep] (0,0)--(4,0) (0,3)--(4,3) (0,1)--(4,1) (1,1)--(1,3); 
	\node at (0.5,2) {$2$};
	\node at (2.5,2) {$3$};
	\node at (2,0.5) {$1$};
	\node (P1) at (3,0) [circle, fill, inner sep=0.5mm] {};
	\node (P2) at (3,3) [circle, fill, inner sep=0.5mm] {};
	\draw[thick, dotted] (P1)--(P2);
\end{tikzpicture}
\]

However, on the probabilistic side, the extension of theorem~\ref{theo:L:guillstruct} for quasi-densities is not trivial on the boundaries. We consider as an example the case of a West half-strip of width $q$ and look for a candidate for the space $L^+_{\infty_\West,q}$. A candidate may be the space of functions $f:M_1^{\setN}\times M_2 \to\ov{\setR}_+$ but is ill-defined if $\mu_1$ is not finite and is subject to analytical subtleties; moreover, as seen in some elementary examples (see below), this may not be the best choice. The advantage of the operadic definitions "up to morphisms" in \cite{Simon} is precisely to allow for a wide variety of possible spaces that are equivalent and the equivalence class of spaces \emph{is part of the unknown quantities} when constructing eigen-elements on the boundary. The purpose of section~\ref{sec:foldingsetc} is indeed to derive algebraically a natural candidate for Gaussian spaces out of the sole operadic definitions.

\subsection{Gaussian weights and lifts}\label{sec:gaussianlift}

The case of Gaussian quasi-densities corresponds to a particular subset of elements $\ee_{Q}$ in $L^+_{p,q}$ with $M_1=\ca{H}_1$ and $M_2=\ca{H}_2$ endowed with the Lebesgue measure. For any $(p,q)\in \PatternShapes=\setN_1\times \setN_1$, we introduce the set of Hermitian positive definite operators on the boundary direct sum of Hilbert spaces:
\begin{equation}\label{eq:def:Qpq:rect}
	\ca{Q}_{p,q} = 
	\left\{ 
	Q \in \End\left(H_1^{2p} \oplus H_2^{2q}\right) ;
	Q^*=Q \text{ and $Q$ positive definite }
	\right\}
\end{equation}
In $H_1^{2p}$ (resp. $H_2^{2q}$), the first $p$ (resp. $q$) vectors correspond to the $p$ (resp. $q$) vectors on a South (resp. West) boundary of a rectangle ordered from left to right (resp. bottom to top) and the last $p$ (resp. $q$) vectors to the $p$ (resp. $q$) vectors on the North (resp. East) boundary with the same ordering. We emphasize on the fact that these sets are not algebras nor vector spaces. For each element $Q\in \ca{Q}_{p,q}$, we define the Gaussian weight $\ee_{Q}\in L^+_{p,q}$ by
\begin{align*}
	\ee_{Q} : H_1^{2p} \oplus H_2^{2q} &\to \setR  
	&
	u &\mapsto  \exp\left( -\frac{1}{2} u^* Q u \right)
\end{align*}
and the map $\ca{Q}_{p,q}\to L^+_{p,q}$ is injective.

We now recall the following fundamental Gaussian integration lemma.
\begin{lemm}[Schur complement]\label{lemm:Schur}
	Let $\ca{W}$ be a finite dimensional Hilbert space and let $\ca{W}_1$ and $\ca{W}_2$ be two subspaces of $\ca{W}$ with dimensions $d_1$ and $d_2$ such that $\ca{W}=\ca{W}_1\oplus\ca{W}_2$. Let $Q$ be a Hermitian positive definite operator on $\ca{W}$ with a decomposition
	\[
	Q = \begin{pmatrix}
		Q_{11} & Q_{12} \\
		Q_{21} & Q_{22} 
	\end{pmatrix}
	\]
	where $Q_{ij}$ is an operator $\ca{W}_j\to\ca{W}_i$. Then,
	\begin{equation}
		\int_{\ca{W}_2} \exp\left( -\frac{1}{2} \begin{pmatrix}
			u_1 \\ u_2 
		\end{pmatrix}^* Q \begin{pmatrix}
			u_1 \\ u_2
		\end{pmatrix}\right)
		du_2 = (2\pi)^{d_2} \det(Q_{22}^{-1}) \exp\left( -\frac{1}{2} u_1^* \ov{Q}_{11} u_1 \right)
	\end{equation}
	where the matrix called the Schur complement $\ov{Q}_{11}=Q_{11}-Q_{12}Q_{22}^{-1}Q_{21}$ is again Hermitian positive definite on $\ca{W}_1$.
\end{lemm}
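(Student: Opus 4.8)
The plan is to reduce the statement to the classical scalar Gaussian integral by completing the square, and to extract positive definiteness of $\ov{Q}_{11}$ directly from the same algebraic identity. First I would record that the block $Q_{22}$ is itself Hermitian positive definite: it is the compression of the positive definite operator $Q$ to the subspace $\ca{W}_2$, since $u_2^*Q_{22}u_2 = \begin{pmatrix} 0 \\ u_2 \end{pmatrix}^* Q \begin{pmatrix} 0 \\ u_2 \end{pmatrix} > 0$ for $u_2\neq 0$. In particular $Q_{22}$ is invertible, so $Q_{22}^{-1}$ is well defined, and the Hermitian property of $Q$ gives $Q_{12}=Q_{21}^*$. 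The key algebraic step is then the completing-the-square identity
\[
\begin{pmatrix} u_1 \\ u_2 \end{pmatrix}^* Q \begin{pmatrix} u_1 \\ u_2 \end{pmatrix}
= u_1^* \ov{Q}_{11} u_1
+ \left(u_2 + Q_{22}^{-1}Q_{21}u_1\right)^* Q_{22} \left(u_2 + Q_{22}^{-1}Q_{21}u_1\right),
\]
which I would verify by expanding the right-hand side and substituting $\ov{Q}_{11}=Q_{11}-Q_{12}Q_{22}^{-1}Q_{21}$ together with $Q_{12}=Q_{21}^*$.

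Given this identity, the factor $\exp(-\tfrac{1}{2}u_1^*\ov{Q}_{11}u_1)$ factors out of the $u_2$-integral, and the translation $v = u_2 + Q_{22}^{-1}Q_{21}u_1$, which has unit Jacobian, reduces the remaining integral to $\int_{\ca{W}_2}\exp(-\tfrac{1}{2}v^*Q_{22}v)\,dv$. I would evaluate this standard Gaussian integral by the spectral theorem: diagonalize $Q_{22}=U\Lambda U^*$ with $U$ unitary and $\Lambda=\Diag(\lambda_1,\ldots,\lambda_{d_2})$, $\lambda_j>0$; the substitution $w=U^*v$ again has unit Jacobian, being an isometry of the underlying real Lebesgue space, and the integral factorizes into a product of scalar Gaussians $\prod_j \int \exp(-\tfrac{1}{2}\lambda_j|w_j|^2)\,dw_j = \prod_j \tfrac{2\pi}{\lambda_j} = (2\pi)^{d_2}\det(Q_{22}^{-1})$, giving the claimed prefactor.

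Finally I would check that $\ov{Q}_{11}$ is Hermitian positive definite. Hermitian-ness is immediate, since $\ov{Q}_{11}^* = Q_{11}^* - Q_{21}^*(Q_{22}^{-1})^*Q_{12}^* = Q_{11} - Q_{12}Q_{22}^{-1}Q_{21} = \ov{Q}_{11}$, using $Q_{11}^*=Q_{11}$, $(Q_{22}^{-1})^*=Q_{22}^{-1}$ and $Q_{12}=Q_{21}^*$. Positive definiteness is the cleanest consequence of the completing-the-square identity: for $u_1\neq 0$, the choice $u_2=-Q_{22}^{-1}Q_{21}u_1$ kills the second term and yields $u_1^*\ov{Q}_{11}u_1=\begin{pmatrix} u_1 \\ u_2 \end{pmatrix}^* Q \begin{pmatrix} u_1 \\ u_2 \end{pmatrix}>0$, the vector $(u_1,u_2)$ being nonzero and $Q$ positive definite.

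The only genuinely delicate point is the bookkeeping of the normalization constant in the complex Hermitian setting: one must keep in mind that $dv$ is the $2d_2$-real-dimensional Lebesgue measure and that $\det$ denotes the complex determinant, so each complex eigenmode contributes $2\pi/\lambda_j$ rather than $\sqrt{2\pi/\lambda_j}$ (in the purely real case the same computation produces $(2\pi)^{d_2/2}\det(Q_{22}^{-1})^{1/2}$ instead). Everything else is routine linear algebra, and the structure of the argument is exactly the one that will be reused to lift the guillotine products to quadratic forms via Schur complements.
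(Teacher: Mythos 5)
Your proof is correct and complete. The paper itself states this lemma without proof, recalling it as a classical Gaussian-integration fact, so there is no internal argument to compare against; your completing-the-square reduction, the unit-Jacobian translation, and the spectral-theorem evaluation of the remaining integral constitute exactly the standard argument the paper implicitly relies on. You also correctly handled the one genuinely delicate point: the normalization $(2\pi)^{d_2}\det(Q_{22}^{-1})$ presupposes the \emph{complex} convention (each complex eigenmode contributing $2\pi/\lambda_j$), which is consistent with the paper's setting of complex finite-dimensional Hilbert spaces and with the cocycle formula $\gamma_{\West\East}(Q_1,Q_2)=(2\pi)^{d_2}\det(Q_{1,\East\East}+Q_{2,\West\West})^{-1}$ appearing later in the proof of proposition~\ref{prop:fromweightstoquadraticforms}.
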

We now use this lemma to lift the $\Guill_2^{(r)}$-structure on $(L^+_{p,q})$ to $(\ca{Q}_{p,q})$.
\begin{prop}\label{prop:fromweightstoquadraticforms}
	Let $(p_1,q)$ and $(p_2,q)$ be two rectangle sizes in $C$. For any $Q_1\in \ca{Q}_{p_1,q}$ and any $Q_2\in \ca{Q}_{p_2,q}$, there is a unique $\gamma_{\West\East}(Q_1,Q_2)\in\setR^*_+$ and a unique element $\Schur_{\West\East}(Q_1,Q_2)\in\ca{Q}_{p_1+p_2,q}$ such that,
	\begin{equation}\label{eq:horizSchur}
		\begin{tikzpicture}[guillpart,yscale=1.15,xscale=2.5]
			\fill[guillfill] (0,0) rectangle (2,1);
			\draw[guillsep] (0,0) rectangle (2,1) (1,0)--(1,1);
			\node at (0.5,0.5) {$\ee_{Q_1}$};
			\node at (1.5,0.5) {$\ee_{Q_2}$};
		\end{tikzpicture}_{L^+}
		=
		\gamma_{\West\East}(Q_1,Q_2) \ee_{\Schur_{\West\East}(Q_1,Q_2)}
	\end{equation}
	Let $(p,q_1)$ and $(p,q_2)$ be two rectangle sizes in $C$. For any $Q_1\in \ca{Q}_{p,q_1}$ and any $Q_2\in \ca{Q}_{p,q_2}$ there is a unique $\gamma_{\South\North}(Q_1,Q_2) \in \setR^*_+$ and a unique element $\Schur_{\South\North}(Q_1,Q_2)\in\ca{Q}_{p,q_1+q_2}$ such that, 
	\begin{equation}\label{eq:vertSchur}
		\begin{tikzpicture}[guillpart,yscale=1.15,xscale=2.5]
			\fill[guillfill] (0,0) rectangle (1,2);
			\draw[guillsep] (0,0) rectangle (1,2) (0,1)--(1,1);
			\node at (0.5,0.5) {$\ee_{Q_1}$};
			\node at (0.5,1.5) {$\ee_{Q_2}$};
		\end{tikzpicture}_{L^+}
		=\gamma_{\South\North}(Q_1,Q_2)\ee_{\Schur_{\South\North}(Q_1,Q_2)}
	\end{equation}
\end{prop}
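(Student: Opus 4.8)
The plan is to turn each elementary guillotine product into a single Gaussian integral over the glued edge variables and to recognise it as an instance of the Schur complement lemma~\ref{lemm:Schur}. I treat the horizontal product \eqref{eq:horizSchur} in detail; the vertical one \eqref{eq:vertSchur} is then obtained by exchanging the roles of $\ca{H}_1$ and $\ca{H}_2$. First I would spell out the integrand. For the partition $\rho_{p_1,p_2|q}$ the shared edges carry the variable $v\in\ca{H}_2^{q}$, which is simultaneously the East boundary of the first rectangle and the West boundary of the second. Writing $u_{\mathrm{ext}}=(u_\South^{(1)},u_\North^{(1)},u_\West,u_\South^{(2)},u_\North^{(2)},u_\East)$ in $\ca{H}_1^{2(p_1+p_2)}\oplus\ca{H}_2^{2q}$ for the surviving boundary variables of the glued rectangle of size $(p_1+p_2,q)$, the product of the two Gaussian weights appearing in \eqref{eq:def:elemguillotprod:dens} is
\[
\alpha_1\alpha_2\exp\Bigl(-\tfrac12 (u_{\mathrm{ext}},v)^*\,\ti{Q}\,(u_{\mathrm{ext}},v)\Bigr),
\]
where $\ti{Q}$ is the Hermitian form on $\ca{W}=\ca{W}_1\oplus\ca{W}_2$, with $\ca{W}_1=\ca{H}_1^{2(p_1+p_2)}\oplus\ca{H}_2^{2q}$ and $\ca{W}_2=\ca{H}_2^{q}$, obtained by summing the two forms $u\mapsto u^*Q_iu$ after embedding the arguments of $\ee_{\alpha_i,Q_i}$ into $\ca{W}$. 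The only coupling between $\ca{W}_1$ and $\ca{W}_2$ comes through the off-diagonal blocks of $Q_1$ and $Q_2$ touching their East and West edges, and the block of $\ti{Q}$ acting on $\ca{W}_2$ is exactly $(Q_1)_{\East\East}+(Q_2)_{\West\West}$.

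The key point to check is that $\ti{Q}$ is positive definite, so that lemma~\ref{lemm:Schur} applies and, in passing, the integral converges (which guarantees that the products of \eqref{eq:def:elemguillotprod:dens} are well defined on $\ca{D}_{p_1,q}\times\ca{D}_{p_2,q}$). This follows because $\ti{Q}$ is a sum of two positive semi-definite forms, one per rectangle; if $(u_{\mathrm{ext}},v)^*\ti{Q}(u_{\mathrm{ext}},v)=0$ then both summands vanish, and since $Q_1,Q_2$ are positive definite this forces all the arguments of $\ee_{\alpha_1,Q_1}$ and $\ee_{\alpha_2,Q_2}$ — hence $u_{\mathrm{ext}}$ and $v$ — to be zero. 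In particular the $\ca{W}_2$-block $(Q_1)_{\East\East}+(Q_2)_{\West\West}$, being a principal submatrix of a positive definite operator, is itself positive definite and invertible.

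It then remains to apply lemma~\ref{lemm:Schur} with $\ca{W}_2$ as the integration space. Integrating out $v$ produces the factor $(2\pi)^{\dim\ca{W}_2}\det(\ti{Q}_{22}^{-1})$ times $\exp(-\tfrac12 u_{\mathrm{ext}}^*\,\ov{\ti{Q}}_{11}\,u_{\mathrm{ext}})$, so the product equals $\ee_{\alpha_1\alpha_2\gamma,\Schur}$ with
\[
\gamma_{\West\East}(Q_1,Q_2)=(2\pi)^{q\,\dim\ca{H}_2}\det\bigl(((Q_1)_{\East\East}+(Q_2)_{\West\West})^{-1}\bigr)>0, \qquad \Schur_{\West\East}(Q_1,Q_2)=\ov{\ti{Q}}_{11},
\]
and the lemma guarantees that $\ov{\ti{Q}}_{11}$ is again Hermitian positive definite, i.e. an element of $\ca{Q}_{p_1+p_2,q}$. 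Uniqueness of the pair $(\gamma_{\West\East},\Schur_{\West\East})$ is then immediate from the fact that $\ee:\setR\times\ca{Q}_{p_1+p_2,q}\to\ca{D}_{p_1+p_2,q}$ is a bijection, together with the positivity $\gamma_{\West\East}>0$.

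The vertical product \eqref{eq:vertSchur} is proved verbatim after integrating over the shared South/North variable $v\in\ca{H}_1^{p}$ and replacing the relevant $\ca{W}_2$-block by $(Q_1)_{\North\North}+(Q_2)_{\South\South}$, yielding $\gamma_{\South\North}$ and $\Schur_{\South\North}$ in the same way. I expect the only delicate part to be the bookkeeping of the block arrangement of $\ti{Q}$ — matching the edges of $R_1$ and $R_2$ to the coordinates of $\ca{W}_1\oplus\ca{W}_2$; once the $\ca{W}_2$-block is identified as a sum of two positive definite principal submatrices, the positive definiteness of $\ti{Q}$ and the entire computation reduce to a single application of lemma~\ref{lemm:Schur}.
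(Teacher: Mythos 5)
Your proposal is correct and follows essentially the same route as the paper's proof: rewrite the guillotine product as a Gaussian integral over the cut variable, assemble the two quadratic forms into a joined form on the full space, and apply the Schur complement lemma~\ref{lemm:Schur} to read off $\gamma_{\West\East}$ and $\Schur_{\West\East}$, with uniqueness from the bijectivity of $\ee$. The only difference is that you explicitly verify the positive definiteness of the joined form (via the sum-of-two-semi-definite-forms argument), a point the paper leaves implicit before invoking the lemma.
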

\begin{proof}
	The proof is a direct consequence of Gaussian integration through lemma~\ref{lemm:Schur} applied to the integrals in equations  \eqref{eq:def:elemguillotprod:dens}.
\end{proof}
A direct consequence is the lift of $\Guill_2$-structure from quasi-densities to quadratic forms.
\begin{coro}[guillotine structure of Gaussian weights]\label{coro:guillstruct:Q}
	The sets $(\ca{Q}_{p,q})_{(p,q)\in C}$ are also endowed with a $\Guill_2^{(r)}$-structure given by the Schur complements $\Schur_{\West\East}$ and $\Schur_{\South\North}$ as generating products:
	\begin{align}\label{eq:SchurforQ}
		\begin{tikzpicture}[guillpart,yscale=1.15,xscale=1.5]
			\fill[guillfill] (0,0) rectangle (2,1);
			\draw[guillsep] (0,0) rectangle (2,1) (1,0)--(1,1);
			\node at (0.5,0.5) {$Q_1$};
			\node at (1.5,0.5) {$Q_2$};
		\end{tikzpicture}_\ca{Q} 
		&= \Schur_{\West\East}(Q_1,Q_2)
		&	
		\begin{tikzpicture}[guillpart,yscale=1.15,xscale=1.5]
			\fill[guillfill] (0,0) rectangle (1,2);
			\draw[guillsep] (0,0) rectangle (1,2) (0,1)--(1,1);
			\node at (0.5,0.5) {$Q_1$};
			\node at (0.5,1.5) {$Q_2$};
		\end{tikzpicture}_\ca{Q}
		&=\Schur_{\South\North}(Q_1,Q_2)
	\end{align}
\end{coro}
We emphasize however that the sets $\ca{Q}_{p,q}$ are not vector spaces and the products are non-linear operations on the matrices $Q_i$. For more details about Schur complements, the reader can jump to section~\ref{sec:complementary:Schur_Complements}. During the lift from $L^+_\bullet$ to $\ca{Q}_\bullet$, the normalization coefficients $\gamma_{\West\East}$ and $\gamma_{\South\North}$ are lost but they can be recovered afterwards through their cocycle property with respect to the products $\Schur_{\West\East}$ and $\Schur_{\South\North}$: this aspect is described in detail in section~\ref{sec:eigenval}.

In order to have easier notation, we finally introduce the notion of "surface power". 
The partition function $Z_R(Q;x_{\partial R})$ of \eqref{eq:def:gaussmarkovinv} on a rectangle of size $(p,q)$ is a function in $L^+_{p,q}$ (using lemma~\ref{lemm:Schur} above) given by
\[
Z_R(Q;\cdot)  =  \begin{tikzpicture}[guillpart,yscale=1.3,xscale=1.5]
	\fill[guillfill] (0,0) rectangle (4,3);
	\draw[guillsep] (0,0) -- (4,0) (0,1)--(4,1) (0,2)--(4,2) (0,3)--(4,3);
	\draw[guillsep] (0,0) -- (0,3) (1,0)--(1,3) (2,0)--(2,3) (3,0)--(3,3) (4,0)--(4,3);
	\node at (0.5,0.5) { $\ee_{Q}$ };
	\node at (1.5,0.5) { $\ee_{Q}$ };
	\node at (2.5,0.5) { $\dots$ };
	\node at (3.5,0.5) { $\ee_{Q}$ };
	\node at (0.5,2.5) { $\ee_{Q}$ };
	\node at (1.5,2.5) { $\ee_{Q}$ };
	\node at (2.5,2.5) { $\dots$ };
	\node at (3.5,2.5) { $\ee_{Q}$ };
	\node at (0.5,1.5) { $\vdots$ };
	\node at (1.5,1.5) { $\vdots$ };
	\node at (2.5,1.5) { $\vdots$ };
	\node at (3.5,1.5) { $\vdots$ };
\end{tikzpicture}
= (\ee_{Q})^{[p,q]} = \alpha_{p,q} \ee_{Q^{[p,q]}}
\]
with $p$ horizontal edges and $q$ vertical edges in the partitions, where $a^{[p,q]}$ is the surface power notation of \cite{Simon} and corresponds to a guillotine partition along a grid with cells of the same sizes and where $Q^{[p,q]}$ is now the surface power in the lifted guillotine operad $(\ca{Q}_{p,q})$ noted
\begin{equation}
	\label{eq:Q:surfacepowers}
	Q^{[p,q]} = \begin{tikzpicture}[guillpart,yscale=1.1,xscale=1.5]
		\fill[guillfill] (0,0) rectangle (4,3);
		\draw[guillsep] (0,0) -- (4,0) (0,1)--(4,1) (0,2)--(4,2) (0,3)--(4,3);
		\draw[guillsep] (0,0) -- (0,3) (1,0)--(1,3) (2,0)--(2,3) (3,0)--(3,3) (4,0)--(4,3);
		\node at (0.5,0.5) { $Q$ };
		\node at (1.5,0.5) { $Q$ };
		\node at (2.5,0.5) { $\dots$ };
		\node at (3.5,0.5) { $Q$ };
		\node at (0.5,2.5) { $Q$ };
		\node at (1.5,2.5) { $Q$ };
		\node at (2.5,2.5) { $\dots$ };
		\node at (3.5,2.5) { $Q$ };
		\node at (0.5,1.5) { $\vdots$ };
		\node at (1.5,1.5) { $\vdots$ };
		\node at (2.5,1.5) { $\vdots$ };
		\node at (3.5,1.5) { $\vdots$ };
	\end{tikzpicture}_\ca{Q}
\end{equation}
where $Q^{[p,q]}$ can be computed either by a Schur complement on \emph{all} the vertical edges or by successive Schur complements on successive guillotine cuts. This computation can be done on a computer but does not give much insight on the large size properties of $Q^{[p,q]}$ and $\alpha_{p,q}$. 

Up to now, the previous computations on finite rectangles are a way to rewrite Gaussian integration formulae in terms of operadic products with suitable associativity properties and it does not add any algebraic content, excepted that it now shares a common formalism with all the Markov processes on the square lattice. The major operadic step relies in the introduction of suitable boundary spaces and products as explained now.

\subsection{Boundary eigen-elements and their lifts.}
\subsubsection{From general definitions to fixed points on quadratic forms}

The question treated in the present section is the obtention of equations to fix the building blocks $A_a(x)$ and $U_{ab}$ of the boundary weights $g^{(\Lambda)}_{p,q}$ announced in \eqref{eq:boundaryweight:internalstruct}. We must first find spaces with a $\Guill_2$-algebra structure in which to find these objects and then write down equations on these elements.

The following sequence of definitions are directly imported from \cite{Simon} and are adapted to the present case of Gaussian Markov processes, for which we have the following set-up (first notations directly imported from \cite{Simon}):
\begin{itemize}
	\item the $\Guill_2$-algebra $\ca{A}_{\PatternShapes}$ is the one of quasi-densities $L^+_{\PatternShapes}$ with the scalar action $(\lambda,f)\mapsto \alpha f$ and products given by integration over the variables on the cuts as seen above;
	\item the fixed Gaussian semi-group $\MarkovWeight{G}_{\bullet,\bullet}$ in $\ca{A}_{\PatternShapes}$ is the one defined by $\MarkovWeight{G}_{1,1}=\ee_{Q}$ where $Q$ is the elementary fixed face operator that defines the model;
	\item the boundary spaces $(\ca{A}_{p,q})_{(p,q)\in\ov{\PatternShapes}\setminus\PatternShapes}$ \emph{still have to be defined} and will contain the elements $A_a(x)$ and $U_{ab}$ and their products.
\end{itemize} 

\begin{rema}The boundary spaces $(\ca{A}_{p,q})_{(p,q)\in\ov{\PatternShapes}\setminus\PatternShapes}$ do not need to be functional spaces such as $L^+_{p,q}$  and it is not necessarily the case since all the definitions below can be formulated up to morphisms. However, it is natural to suppose a \emph{Gaussian Ansatz} with elements $A_a$ behaving as Gaussian densities $\ee_{Q_a}$ in order for the boundary weights $g^{(\Lambda)}_{p,q}$ to be Gaussian. As it will be seen below, the boundary eigen-elements will correspond to \emph{infinite-dimensional} Gaussian processes without any density with respect to a Lebesgue measure. Defining products on hypothetical spaces $(L^+_{p,q})_{(p,q)\in\ov{\PatternShapes}\setminus\PatternShapes}$ cannot be performed as in \eqref{eq:def:elemguillotprod:dens} due to renormalization effects.
\end{rema}

In order to bypass efficiently these difficulties and have purely algebraic objects in a first step, we first lift all the definitions below from \cite{Simon} to the level of quadratic forms with boundary spaces $(\ca{Q}_{p,q})_{(p,q)\in\ov{\PatternShapes}\setminus\PatternShapes}$ of suitable quadratic forms endowed with non-linear Schur products. We will consider afterwards again the question of densities, products and eigenvalues and see how renormalization enters the construction in the last section~\ref{sec:eigenval}.

The easiest boundary Ansatz is to consider one-dimensional eigen-generators (see \cite{Simon} for the spaces $\ca{V}_\bullet$) with $\ca{V}_{1,\infty_\South}=\setR_+ A_\South$ and $\ca{V}_{p}=\emptyset$ for $p\geq 2$ (and similar choices on the three other boundary sides) and similar one-dimensional spaces $\ca{V}_{\infty_\West,\infty_\South}=\setR_+U_{\South\West}$ on each corner. If the system of equations obtained below using this hypothesis may not find any solution, then more complicated choice should be considered but this is not the case here (for example, we know that, in the present case of Gaussian processes with a positive definite face quadratic form $Q$, the uniqueness of the infinite-volume Gibbs measure is ensured (see \cite{georgii} for example). 

We reproduce here the definitions of \cite{Simon} with notational changes to fit to the present Gaussian case.
It may not be obvious for the reader to recognize at first sight what is fixed from the model and what it is unknown in the following definitions, we then choose to add a top bar for each unknown variable that we will look for.

\begin{defi}[eigen-generator up to morphisms of a 2D-semi-group, from \cite{Simon}]\label{def:eigenalgebrauptomorphims}
	An element $\ov{A}_\South\in \ca{A}_{1,\infty_\South}$ is \emph{a South eigen-$\Guill_1$-generator up to morphisms} of the semi-group $\ee_{Q}^{[\bullet,\bullet]}$ with eigenvalue $\lambda\in\setR_+$ if and only if there exists a collection of linear maps $\Phi_p^{\South,q,r} : \ca{A}_{p,\infty_\South} \to \ca{A}_{p,\infty_\South}$, $p\in\setN^*$, $q\in\setN^*$, $r\in\setN$ such that:
	\begin{enumerate}[(i)]
		\item for any $N\in\setN^*$ and any $p\in\setN^*$, 
		\begin{equation}\label{eq:defeigenSouth:noM}
			\Phi_{p}^{\South,q,0}\left(
			\begin{tikzpicture}[guillpart,yscale=1.25,xscale=2]
				\fill[guillfill] (0,0) rectangle (3,2);
				\draw[guillsep] (0,0)--(0,2)--(3,2)--(3,0) (1,0)--(1,1) (2,0)--(2,1) (0,1)--(3,1);
				\node at (0.5,0.5) {$\ov{A}_\South$};
				\node at (1.5,0.5) {$\dots$};
				\node at (2.5,0.5) {$\ov{A}_\South$};
				\node at (1.5,1.5) {$\ee_{Q}^{[p,q]}$};
			\end{tikzpicture}
			\right)
			= \lambda^{pq} 	\begin{tikzpicture}[guillpart,yscale=1.25,xscale=2]
				\fill[guillfill] (0,0) rectangle (3,1);
				\draw[guillsep] (0,0)--(0,1)--(3,1)--(3,0) (1,0)--(1,1) (2,0)--(2,1) ;
				\node at (0.5,0.5) {$\ov{A}_\South$};
				\node at (1.5,0.5) {$\dots$};
				\node at (2.5,0.5) {$\ov{A}_\South$};
			\end{tikzpicture}
		\end{equation}
		\item for any $N\in\setN^*$, for any $r\in\setN^*$, any $p\in\setN^*$, for any $\MarkovWeight{T}\in \ca{A}_{p,r}$,
		\begin{equation}\label{eq:defeigenSouth:withM}
			\Phi_{p}^{\South,q,r}\left(
			\begin{tikzpicture}[guillpart,yscale=1.25,xscale=2]
				\fill[guillfill] (0,0) rectangle (3,3);
				\draw[guillsep] (0,0)--(0,3)--(3,3)--(3,0) (1,0)--(1,1) (2,0)--(2,1) (0,1)--(3,1) (0,2)--(3,2);
				\node at (0.5,0.5) {$\ov{A}_\South$};
				\node at (1.5,0.5) {$\dots$};
				\node at (2.5,0.5) {$\ov{A}_\South$};
				\node at (1.5,1.5) {$\ee_{Q}^{[p,q]}$};
				\node at (1.5,2.5) {$\MarkovWeight{T}$};
			\end{tikzpicture}
			\right)
			= \lambda^{pq} 	\begin{tikzpicture}[guillpart,yscale=1.25,xscale=2]
				\fill[guillfill] (0,0) rectangle (3,2);
				\draw[guillsep] (0,0)--(0,2)--(3,2)--(3,0) (1,0)--(1,1) (2,0)--(2,1) (0,1)--(3,1);
				\node at (0.5,0.5) {$\ov{A}_\South$};
				\node at (1.5,0.5) {$\dots$};
				\node at (2.5,0.5) {$\ov{A}_\South$};
				\node at (1.5,1.5) {$\MarkovWeight{T}$};
			\end{tikzpicture}
		\end{equation}
	\end{enumerate}
\end{defi}
This definition on South spaces can be replicated \emph{mutatis mutandis} on the four sides by rotating the guillotine partitions or use of the dihedral group. In practice, the lift from $\ov{A}_S "=" \ee_{\ov{Q}_S}$ to a quadratic form corresponds to drop first the scalar eigenvalue part in the previous definitions (as in the passage from $\ee_{Q}\in L^+_{p,q}$ to $Q\in \ca{Q}_{p,q}$ in proposition~\ref{prop:fromweightstoquadraticforms} and corollary~\ref{coro:guillstruct:Q}) and then consider only the notion of \emph{fixed point up to morphisms} for the quadratic forms with Schur products after the lift.

Instead of unknown $L^+$-like boundary spaces, we introduce still-unknown spaces $(\ca{Q}_{p,q})_{(p,q)\in\ov{\PatternShapes}\setminus\PatternShapes}$ in which we will search for the wanted parameters and formulate the following definition, which is central for our computations and is only one compatible with a Gaussian element $\ov{A}_\South := \ee_{\ov{Q}^{[1,\infty_\South]}}$ with $\ov{Q}^{[1,\infty_\South]}\in \ca{Q}_{1,\infty_\South}$. The exponent notation in $\ov{Q}^{[1,\infty_\South]}$ is kept to remind the reader to which space the variable belongs and to distinguish with sub-block extraction as in \eqref{eq:block_decompo_face_weight}.

\begin{defi}[fixed-point up to morphisms]\label{def:fixedpointuptomorphims}
	An element $\ov{Q}^{[1,\infty_\South]}\in \ca{Q}_{1,\infty_\South}$ is \emph{a fixed-point-generator up to morphisms} of the surface product semi-group $Q^{[\bullet,\bullet]}$ if and only if there exists a collection of maps $\phi_p^{\South,q,r} : \ca{Q}_{p,\infty_\South} \to \ca{Q}_{p,\infty_\South}$, $p\in\setN^*$, $q\in\setN^*$, $r\in\setN$ such that:
	\begin{enumerate}[(i)]
		\item for any $N\in\setN^*$ and any $p\in\setN^*$, 
		\begin{equation}\label{eq:deffixedSouth:noM}
			\phi_{p}^{\South,q,0}\left(
			\begin{tikzpicture}[guillpart,yscale=1.2,xscale=2.8]
				\fill[guillfill] (0,0) rectangle (3,2);
				\draw[guillsep] (0,0)--(0,2)--(3,2)--(3,0) (1,0)--(1,1) (2,0)--(2,1) (0,1)--(3,1);
				\node at (0.5,0.5) {$\ov{Q}^{[1,\infty_\South]}$};
				\node at (1.5,0.5) {$\dots$};
				\node at (2.5,0.5) {$\ov{Q}^{[1,\infty_\South]}$};
				\node at (1.5,1.5) {$Q^{[p,q]}$};
			\end{tikzpicture}_{\ca{Q}}
			\right)
			=\begin{tikzpicture}[guillpart,yscale=1.2,xscale=2.8]
				\fill[guillfill] (0,0) rectangle (3,1);
				\draw[guillsep] (0,0)--(0,1)--(3,1)--(3,0) (1,0)--(1,1) (2,0)--(2,1) ;
				\node at (0.5,0.5) {$\ov{Q}^{[1,\infty_\South]}$};
				\node at (1.5,0.5) {$\dots$};
				\node at (2.5,0.5) {$\ov{Q}^{[1,\infty_\South]}$};
			\end{tikzpicture}_{\ca{Q}}
		\end{equation}
		\item for any $N\in\setN^*$, for any $r\in\setN^*$, any $p\in\setN^*$, for any $B\in\ca{Q}_{p,r}$,
		\begin{equation}\label{eq:deffixedSouth:withM}
			\phi_{p}^{\South,q,r}\left(
			\begin{tikzpicture}[guillpart,yscale=1.2,xscale=2.8]
				\fill[guillfill] (0,0) rectangle (3,3);
				\draw[guillsep] (0,0)--(0,3)--(3,3)--(3,0) (1,0)--(1,1) (2,0)--(2,1) (0,1)--(3,1) (0,2)--(3,2);
				\node at (0.5,0.5) {$\ov{Q}^{[1,\infty_\South]}$};
				\node at (1.5,0.5) {$\dots$};
				\node at (2.5,0.5) {$\ov{Q}^{[1,\infty_\South]}$};
				\node at (1.5,1.5) {$Q^{[p,q]}$};
				\node at (1.5,2.5) {$B$};
			\end{tikzpicture}_{\ca{Q}}
			\right)
			= 	\begin{tikzpicture}[guillpart,yscale=1.2,xscale=2.8]
				\fill[guillfill] (0,0) rectangle (3,2);
				\draw[guillsep] (0,0)--(0,2)--(3,2)--(3,0) (1,0)--(1,1) (2,0)--(2,1) (0,1)--(3,1);
				\node at (0.5,0.5) {$\ov{Q}^{[1,\infty_\South]}$};
				\node at (1.5,0.5) {$\dots$};
				\node at (2.5,0.5) {$\ov{Q}^{[1,\infty_\South]}$};
				\node at (1.5,1.5) {$B$};
			\end{tikzpicture}_{\ca{Q}}
		\end{equation}
	\end{enumerate}
\end{defi}

Previous definition~\ref{def:eigenalgebrauptomorphims} deals with only one side and one of the objects $\ov{A}_a$ and is similar to the one-dimensional definition of eigenvectors with the addition of the notion of morphisms. A quick correspondence for the reader lost in the operadic notations is as follows: we consider a Gaussian transition kernel on the 1D lattice $\setZ$ with quadratic form $Q$. Definition~\ref{def:eigenalgebrauptomorphims} corresponds to the eigenvalue equation for the kernel
\[
\int_\ca{H} f(u) e^{-Q((u,v),(u,v))/2}  du = \Lambda f(v) 
\]
The lift corresponds to the Gaussian Ansatz $f(u)= e^{-q(u)/2}$ and then induces the eigenvalue-free fixed-point equation 
\[
Q_{22}-Q_{21} (Q_{11}+q)^{-1} Q_{12} = q
\]
where we recognize a Schur complement on the left. Such equations are studied in great details in \cite{Bodiot} and are reminded in section \ref{sec:dimone:reminder}.

The two-dimensional geometry introduces additional morphisms since the transverse directions is endowed with associative products. It also adds the notion of corners, to which we associate the elements $U_{ab}$. The eigen-element property on the corner adds two types of algebraic constraints: the half-strip elements $A_a$ and $A_b$ satisfying both definition~\ref{def:eigenalgebrauptomorphims} on the two sides adjacent to the corner have to be consistent in the sense of "left (or right)-extended system of eigen-generators" and "corner system of eigen-semi-groups" as introduced in \cite{Simon}. We do not reproduce here the full definitions with the eigenvalues of \cite{Simon} but only their lifts as fixed points at the level of quadratic forms. 

\begin{defi}[left-extended fixed point up to morphisms]\label{def:fixedpointuptomorph:leftextended}
	In addition to the semi-group $Q^{[\bullet,\bullet]}$, we also consider a fixed element $R_\West \in \ca{Q}_{\infty_\West,1}$ with Schur powers $R_\West^{[q]}$, $q\in\setN^*$. A couple $(\ov{Q}^{[1,\infty_\South]},\ov{Q}^{[\infty_\West,\infty_\South]})$ in $\ca{Q}_{1,\infty_\South}\times\ca{Q}_{\infty_\West,\infty_\South}$ is a left-extended fixed point up to morphisms of $( Q^{[\bullet,\bullet]}  ,  R_\West^{[\bullet]} )$ if and only if there exists a collection of morphisms $\phi_p^{\South,q,r}: \ca{Q}_{p,\infty_\South}\to\ca{Q}_{p,\infty_\South}$, $p\in\setN^*\cup\{\infty_\West\}$, $q\in\setN^*$, $r\in\setN$ such that:
	\begin{enumerate}[(i)]
		\item $\ov{Q}^{[1,\infty_\South]}$ is an fixed-point-generator up to morphisms of $Q^{[\bullet,\bullet]}$ up to the morphisms $\phi_p^{\South,q,r}$ with finite $p\in\setN^*$.
		
		\item for any $N\in\setN$, for any $r\in\setN^*$, any $p\in\setN$, for any $B_\West\in \ca{Q}_{\infty_\West,r}$,
		\begin{equation}\label{eq:cornermorphism:fixed:def}
			\begin{split}&\phi_{\infty_\West}^{
					\South,q,r}\left(
				\begin{tikzpicture}[guillpart,yscale=1.2,xscale=3.5]
					\fill[guillfill] (0,0) rectangle (4,3);
					\draw[guillsep] (4,0)--(4,3)--(0,3) (0,1)--(4,1) (0,2)--(4,2) (1,0)--(1,2) (2,0)--(2,1) (3,0)--(3,1);
					\node at (0.5,0.5) {$\ov{Q}^{[\infty_\West,\infty_\South]}$};
					\node at (1.5,0.5) {$\ov{Q}^{[1,\infty_\South]}$};
					\node at (2.5,0.5) {$\dots$};
					\node at (3.5,0.5) {$\ov{Q}^{[1,\infty_\South]}$};
					\node at (0.5,1.5) {$R_\West^{[q]}$};
					\node at (2.5,1.5) {$Q^{[p,q]}$};
					\node at (2,2.5) {$B_\West$};
				\end{tikzpicture}_\ca{Q}
				\right)
				\\
				&=
				\begin{tikzpicture}[guillpart,yscale=1.2,xscale=3.5]
					\fill[guillfill] (0,0) rectangle (4,2);
					\draw[guillsep] (4,0)--(4,2)--(0,2) (0,1)--(4,1) (1,0)--(1,1) (2,0)--(2,1) (3,0)--(3,1);
					\node at (0.5,0.5) {$\ov{Q}^{[\infty_\West,\infty_\South]}$};
					\node at (1.5,0.5) {$\ov{Q}^{[1,\infty_\South]}$};
					\node at (2.5,0.5) {$\dots$};
					\node at (3.5,0.5) {$\ov{Q}^{[1,\infty_\South]}$};
					\node at (2,1.5) {$B_\West$};
				\end{tikzpicture}_\ca{Q}
			\end{split}
		\end{equation}
	\end{enumerate}
\end{defi}
Given an element $\ov{Q}^{[1,\infty_\South]}$ (lift of $\ov{A}_S$), we note its $p$-th Schur power $\ov{Q}^{[p,\infty_\South]}$ to make notations easier: the reader must however remember that all these objects are generated by a single element given for $p=1$. In the definition below, $\ov{Q}^{[\infty_\West,1]}$ is the lift of $\ov{A}_\West$ and $\ov{Q}^{[\infty_\West,\infty_\South]}$ is the lift of $U_{\South\West}$.
\begin{defi}[corner system of fixed points]\label{def:fixedpointcorner}
	A South-West corner system of fixed points of a 2D-semi-group $Q^{[\bullet,\bullet]}$ up to morphisms  is a triplet
	$(\ov{Q}^{[\infty_\West,1]},\ov{Q}^{[1,\infty_\South]},\ov{Q}^{[\infty_\West,\infty_\South]})$
	such that:
	\begin{enumerate}[(i)]
		\item $(\ov{Q}^{[1,\infty_\South]},\ov{Q}^{[\infty_\West,\infty_\South]})$ defines a left-extended system of fixed points of $( Q^{[\bullet,\bullet]}  , \ov{Q}^{[\infty_\West,\bullet]} )$ up to morphisms $\phi^{\South,q,r}_p$.
		\item $(\ov{Q}^{[\infty_\West,1]},\ov{Q}^{[\infty_\West,\infty_\South]})$ defines a left-extended system of fixed points of $( Q^{[\bullet,\bullet]}  ,  \ov{Q}^{[\bullet,\infty_\South]} )$  up to morphisms $\phi^{\West,p,r}_q$.
	\end{enumerate}
\end{defi}
The three other corners provide three similar definitions \emph{mutatis mutandis}. The following steps are the construction (up to morphisms) of the spaces $(\ca{Q}_{p,q})_{(p,q)\in\ov{\PatternShapes}\setminus\PatternShapes}$ with the suitable eigen-elements morphisms and the transformations of the last definitions into concrete equations.

\subsubsection{Natural boundary spaces of quadratic forms and their structures}

\paragraph{Definitions}
In order to satisfy definitions~\ref{def:fixedpointuptomorphims}, \ref{def:fixedpointuptomorph:leftextended} and \ref{def:fixedpointcorner}, we must first specify, up to morphisms, the spaces $\ca{Q}_{p,q}$ on the boundaries. In the Gaussian Ansatz after lifts to quadratic forms, the most natural choice is to assume the existence of Hilbert spaces $\ca{W}_{a}$ with $a\in\{\South,\North,\West,\East\}$ associated to infinite half-lines in the corresponding direction $a$ such that $\ca{Q}_{p,q}$ is a quadratic form on the orthogonal sum of the Hilbert spaces associated to its boundary sides in the same way as in \eqref{eq:def:Qpq:rect} when $p$ and $q$ are both finite. For example, an element $Q\in \ca{Q}_{p,\infty_\South}$ is expected to be an operator on $\ca{H}_1^p\oplus\ca{W}_\South^2$ since a North half-strip has one South side of length $p$ and to half-lines in the South direction.

In order to ensure that the operadic guillotine products in definition \ref{def:fixedpointuptomorphims} are defined and expressed by Schur products (see sections~\ref{sec:complementary:Schur_Complements} and \ref{sec:dimone:reminder}), the operators in $\ca{Q}_{p,\infty_\South}$ are required to be self-adjoint and positive definite at least. However, this hypothesis in \eqref{eq:def:Qpq:rect} for finite-dimensional spaces is not sufficient to encompass the case of infinite-dimensional spaces $\ca{W}_{a}$. We therefore directly introduce boundary spaces with sufficient hypothesis to ensure a well-defined operadic structure, for all $a\in\{\West,\East\}$ and $b\in\{\South,\North\}$:
\begin{subequations}
	\label{eq:hs:Qspaces}
	\begin{align}
		\ca{Q}_{\infty_a,q} &= \left\{ Q\in \ca{B}(\ca{W}_a^2\oplus\ca{H}_2^q); Q=Q^*\text{ and }\inf_{x\neq 0} \frac{\scal{x}{Qx}}{\scal{x}{x}} > 0 \right\}
		\\
		\ca{Q}_{p,\infty_b} &= \left\{ Q\in \ca{B}(\ca{H}_1^{p}\oplus\ca{W}_b^2); Q=Q^*\text{ and }\inf_{x\neq 0} \frac{\scal{x}{Qx}}{\scal{x}{x}} > 0 \right\}
	\end{align}	
	and for corners the following spaces:
	\begin{align}	\label{eq:corner:Qspaces}
		\ca{Q}_{\infty_a,\infty_b} &= \left\{ Q\in \ca{B}(\ca{W}_a\oplus\ca{W}_b); Q=Q^*\text{ and }\inf_{x\neq 0} \frac{\scal{x}{Qx}}{\scal{x}{x}} > 0 \right\}
	\end{align}	
\end{subequations}
The existence of products
\[
\begin{tikzpicture}[guillpart,yscale=1,xscale=1]
	\fill[guillfill] (0,0) rectangle (2,2);
	\draw[guillsep] (0,0)--(2,0)--(2,2)--(0,2) (0,1)--(2,1);
	\node at (1,0.5) {$Q_1$};
	\node at (1,1.5) {$Q_2$};
\end{tikzpicture}
\text{\qquad and\qquad}
\begin{tikzpicture}[guillpart,yscale=1,xscale=1]
	\fill[guillfill] (0,0) rectangle (4,1);
	\draw[guillsep] (0,0)--(4,0)--(4,1)--(0,1) (2,0)--(2,1);
	\node at (1,0.5) {$Q_1$};
	\node at (3,0.5) {$Q$};
\end{tikzpicture}
\]
for $Q_i\in \ca{Q}_{\infty_\West,q_i}$ and $Q\in \ca{Q}_{p,q_1}$ with suitable associativity properties on the $\Guill_2$-operad requires these constraints on the the quadratic forms and a second shift requirements on the spaces $\ca{W}_a$ described below.

The fixed points of definitions~\ref{def:fixedpointuptomorphims} and \ref{def:fixedpointcorner} thus have the following structures in the block notations~\eqref{eq:block_decompo_partielles} (we have drawn them in the corresponding shape in order to prepare the products associated to the guillotine partitions):
\begin{subequations}
	\label{eq:fixedpoints:blocknotations}
	\begin{align}
		\begin{tikzpicture}[guillpart,yscale=1.4,xscale=1.4]
			\fill[guillfill] (0,0) rectangle (2,1);
			\draw[guillsep] (0,0)--(0,1)--(2,1)--(2,0);
			\node at (1,0.5) {$\ov{Q}^{[1,\infty_\South]}$};
		\end{tikzpicture}
		&= \begin{pmatrix} 
			\ov{Q}^{[1,\infty_\South]}_{\North,\North} & \ov{Q}^{[1,\infty_\South]}_{\North,[\West\East]}
			\\
			\ov{Q}^{[1,\infty_\South]}_{[\West\East],\North} & \ov{Q}^{[1,\infty_\South]}_{[\West\East],[\West\East]}
		\end{pmatrix}
		&
		\begin{tikzpicture}[guillpart,yscale=1.4,xscale=1.4]
			\fill[guillfill] (0,0) rectangle (2,1);
			\draw[guillsep] (0,0)--(2,0)--(2,1)--(0,1);
			\node at (1,0.5) {$\ov{Q}^{[\infty_\West,1]}$};
		\end{tikzpicture}
		&= \begin{pmatrix}
			\ov{Q}^{[\infty_\West,1]}_{[\South\North],[\South\North]}
			& 
			\ov{Q}^{[\infty_\West,1]}_{[\South\North],\East}
			\\
			\ov{Q}^{[\infty_\West,1]}_{[\East,[\South\North]}
			&
			\ov{Q}^{[\infty_\West,1]}_{\East,\East}
		\end{pmatrix}
		\\
		\begin{tikzpicture}[guillpart,yscale=1.5,xscale=1.5]
			\fill[guillfill] (0,0) rectangle (2,1);
			\draw[guillsep] (0,1)--(2,1)--(2,0);
			\node at (1,0.5) {$\ov{Q}^{[\infty_\West,\infty_\South]}$};
		\end{tikzpicture} &=
		\begin{pmatrix}
			\ov{Q}^{[\infty_\West,\infty_\South]}_{\North,\North}
			&
			\ov{Q}^{[\infty_\West,\infty_\South]}_{\North,\East}
			\\
			\ov{Q}^{[\infty_\West,\infty_\South]}_{\East,\North}
			&
			\ov{Q}^{[\infty_\West,\infty_\South]}_{\East,\East}
		\end{pmatrix}
	\end{align}
\end{subequations}

\paragraph{Operadic structure of boundary spaces}

We detail here the construction of the products and then prove the $\Guill_2$-structure of the boundary spaces. Given a guillotine partition $\rho$ of a pattern type $D$ of $\setZ^2$ with shapes $D_i$, $1\leq i\leq n$. Each shape $D_i$ has $b_i\leq 4$ boundaries $B_{i,k}$, that are segments of half-lines. We now consider all the possible intersections of $B_{i,k}$ (there are either empty or lines or half-lines or segments) and label them $e_j$, $1\leq j\leq m$. We assume that each elementary boundary shape $e\in E_j$ is decorated by a Hilbert space $\ca{V}_e$ and each face is decorated by a linear map $Q_i$ acting on $\oplus_{e\subset \partial D_i} \ca{V}_e$. As an example, we consider
\[
\rho_0 = \begin{tikzpicture}[guillpart,yscale=1.5,xscale=2.5]
	\draw (-0.5,0)--(3,0)--(3,2)--(-.5,2) (2,0)--(2,2) (-0.5,1)--(2,1) (1,0)--(1,1);
	\node at (0.25,0) {$\ca{V}_1$};
	\node at (1.5,0) {$\ca{V}_2$};
	\node at (2.5,0) {$\ca{V}_3$};
	\node at (0.25,1) {$\ca{V}_4$};
	\node at (1.5,1) {$\ca{V}_5$};
	\node at (1.25,2) {$\ca{V}_6$};
	\node at (2.5,2) {$\ca{V}_7$};
	\node at (1,0.5) {$\ca{V}_8$};
	\node at (2,1.5) {$\ca{V}_{10}$};
	\node at (2,0.5) {$\ca{V}_9$};
	\node at (3,1) {$\ca{V}_{11}$};
	\node at (0.25,0.5) {$Q_1$};
	\node at (1.5,0.5) {$Q_2$};
	\node at (1.25,1.5) {$Q_3$};
	\node at (2.5,1) {$Q_4$};
\end{tikzpicture}
\]
We introduce the linear map 
\begin{align*}
	j_\rho : \oplus_{i} \End\left(
	\oplus_{e\subset \partial F_i} \ca{V}_e
	\right) &\to \End\left(
	\oplus_{1\leq j\leq m} \ca{V}_{e_j}
	\right)
	&
	\begin{pmatrix}
		Q_1 \\ \vdots \\ Q_{|F_\rho|}
	\end{pmatrix} & \mapsto J_\rho\begin{pmatrix}
		Q_1 \\ \vdots \\ Q_{|F_\rho|} \end{pmatrix}
\end{align*}
where $J_\rho$ is the rectangular matrix of size $m \times (\sum_{j} b_j)$ with identities $\id_{\ca{H}_e}$ whenever an edge $e$ belongs to $\partial F$ and zeroes elsewhere. We now consider the subspace associated to internal edges $\ca{V}^\text{in} =\oplus_{e\text{ inner }} \ca{V}_e$, i.e. edges that belong to exactly two faces. In the previous example, we have
\[
J_{\rho_0} = \begin{pmatrix}
	1 & 0 & 0 & 1 & 0 & 0 & 0 &1 & 0 & 0 &0 \\
	0 & 1 & 0 & 0 & 1 & 0 & 0 &1 & 1 & 0 &0 \\ 
	0 & 0 & 0 & 1&1&1&0&0&0&1&0\\
	0&0&1&0&0&0&1&0&1&1&1
\end{pmatrix}
\]
Whenever the Schur product is well-defined, we finally define the pre-product associated to the guillotine partition:
\begin{equation}\label{eq:preproduct}
	m'_\rho( Q_1,\ldots, Q_{|F_\rho|} ) = \mathrm{Schur}_{\ca{V}^\text{in}}\circ j_\rho(Q_1,\ldots, Q_{|F_\rho|})
\end{equation} 
which is now on operator on $\ca{V}^{\text{out}}=\oplus_{e\text{ outer }} \ca{V}_e$. In the previous example $\rho_0$ the inner edges are $e_j$ with $j\in\{4,5,8,9,10\}$ and the outer edges are the $e_j$ with $j\in\{1,2,3,6,7,11\}$. 

The pre-product $m'_\rho$ are \emph{not yet} the operadic products for an interesting reason which is related to the colours of the operads and plays a role for boundary spaces. 

The spaces $\ca{Q}_{p,q}$ act on spaces associated to each side of the boundary of a rectangular shape and does not see the full guillotine partition and hence to not distinguish all the edge spaces $\ca{V}_e$. In the example $\rho_0$ above, $Q_4 \in \ca{Q}_{p_4,q_4}$ acts on $\ca{V}_3=\ca{V}_7=\ca{H}_1^{p_4}$ and $\ca{V}_{11}=\ca{H}_2^{q_4}$ and a space $\ca{V}'=\ca{H}_2^{q_4}$ on the West, which replaces the two spaces $\ca{V}_{9}$ and $\ca{V}_{10}$ inherited from the global structure of $\rho_0$ (and not only $D_4$). Whenever the lengths are finite, there is a canonical isomorphism $ \ca{H}^{P}\simeq \ca{H}^{p_1}\oplus\ldots\oplus\ca{H}^{p_k}$ with $P=p_1+\ldots+p_k$ so that any $Q\in\ca{Q}_{p,q}$ is extended to the suitable spaces associated to the edges $E_j$ on each side. This corresponds to the trivial identification already made implicitly in the Gaussian computations of property~\ref{prop:fromweightstoquadraticforms} and corollary~\ref{coro:guillstruct:Q}.

However, whenever a size is infinite, such a simple isomorphism is absent \emph{a priori} and has to be added. In the example $\rho_0$, an element $Q_3\in\ca{Q}_{\infty_\West,q}$ acts on $\ca{W}_\West^2\oplus\ca{H}_2^q$ but the first space $\ca{W}_\West$ has to be identified with $\ca{V}_4\oplus\ca{V}_5$ with $\ca{V}_4=\ca{W}_\West$ and $\ca{V}_{5}=\ca{H}_1^{p_2}$. Considering any guillotine  partitions and associativity requirements (see below) and the hierarchical structure of guillotine partitions shows that the half-line spaces $\ca{W}_a$ need to satisfy the following minimal definition.

\begin{defi}[shift property]\label{def:shiftprop}
	Let $\ca{H}$ be a Hilbert space. A Hilbert space $\ca{W}$ has a left (resp. right) $\ca{H}$-shift property if there exists a collection of bijective isometries $(D_p)_{p\in\setN}$ with $D_p : \ca{W}\oplus\ca{H}^p \to \ca{W}$ (resp. $D_p : \ca{H}^p\oplus \ca{W} \to \ca{W}$) such that, for all $p_1,p_2\geq 0$,
	\[
	D_{p_2}\circ (D_{p_1}\oplus\id_{\ca{H}^{p_2}}) = D_{p_1+p_2}
	\]
	(resp. $D_{p_2}\circ (\id_{\ca{H}^{p_2}}\oplus D_{p_1}) = D_{p_1+p_2}$).
\end{defi}
These operators $D_p$ are introduced so that identifications
\[
\begin{tikzpicture}[scale=1,baseline=0.cm]
	\draw[-o] (0,0)--(1,0) ;
	\draw[-o] (1,0)--(2,0) ;
	\draw[-o] (2,0)--(3,0) ;
	\node at (0.5,0) {$\ca{W}_{\West}$} ;
	\node at (1.5,0) {$\ca{H}_{1}^p$} ;
	\node at (2.5,0) {$\ca{H}_{1}^{p'}$} ;
\end{tikzpicture}
\qquad \to\qquad
\begin{tikzpicture}[scale=1,baseline=0.cm]
	\draw[-o] (0,0)--(1,0) ;
	\draw[-o] (1,0)--(2,0) ;
	\node at (0.5,0) {$\ca{W}_{\West}$} ;
	\node at (1.5,0) {$\ca{H}_{1}^p$} ;
\end{tikzpicture}
\qquad \to\qquad 
\begin{tikzpicture}[scale=1,baseline=0.cm]
	\draw[-o] (0,0)--(1,0) ;
	\node at (0.5,0) {$\ca{W}_{\West}$} ;
\end{tikzpicture}
\] 
can be performed sequentially in guillotine partitions such as $\rho_0$ above and will be useful for fixed point equations.

In order to be fully rigorous and compatible with \cite{Simon}, we also need to introduce line spaces $\ca{W}_{\West\East}$ and $\ca{W}_{\South\North}$ in order to deal with guillotine partitions with lengths $\infty_{LR}$ and pointings. More details are given in section~\ref{sec:moreonguill}: the reader interested only in the fixed point equations can skip them in a first time.

We also introduce the associated spaces of quadratic forms, for any $p,q\in\setN_1$, $a\in\{\infty_\West,\infty_\East\}$ and $b\in\{\infty_\South,\infty_\North\}$.
\begin{subequations}
	\label{eq:strip:Qspaces}
	\begin{align}
		\ca{Q}_{\infty_{\West\East},q} &= \left\{ Q\in \ca{B}(\ca{W}_{\West\East}^2); Q=Q^*\text{ and }\inf_{x\neq 0} \frac{\scal{x}{Qx}}{\scal{x}{x}} > 0 \right\}
		\\
		\ca{Q}_{\infty_{\West\East},\infty_b} &= \left\{ Q\in \ca{B}(\ca{W}_{\West\East}); Q=Q^*\text{ and }\inf_{x\neq 0} \frac{\scal{x}{Qx}}{\scal{x}{x}} > 0 \right\}
	\end{align}
	\begin{align}
		\ca{Q}_{p,\infty_{\South\North}} &= \left\{ Q\in \ca{B}(\ca{W}_{\South\North}^2); Q=Q^*\text{ and }\inf_{x\neq 0} \frac{\scal{x}{Qx}}{\scal{x}{x}} > 0 \right\}
		\\	
		\ca{Q}_{\infty_a,\infty_{\South\North}} &= \left\{ Q\in \ca{B}(\ca{W}_{\South\North}); Q=Q^*\text{ and }\inf_{x\neq 0} \frac{\scal{x}{Qx}}{\scal{x}{x}} > 0 \right\}
	\end{align}
\end{subequations} 

\paragraph{Extended guillotine structure}

\begin{theo}
	Let $\ca{W}_{\South}$, $\ca{W}_\North$, $\ca{W}_{\West}$ and $\ca{W}_\East$ be spaces endowed respectively with left $\ca{H}_2$-shifts, right $\ca{H}_2$-shifts, left $\ca{H}_1$-shifts and right $\ca{H}_1$-shifts and let $\ca{W}_{\South\North}$ and $\ca{W}_{\West\East}$ be endowed with the corresponding shift-with-pairing property. 
	
	For any given guillotine partition $\rho$ with external shape and internal shapes $(p_i,q_i)_{1\leq i\leq n}$ and any sequence of elements $Q_i\in\ca{Q}_{p_i,q_i}$, we note $Q'_i$ the element $Q_i$ applied on the elementary intervals on the boundaries identified to the whole boundary side using the shifts $D^a_\bullet$, $a\in\{\South,\North,\West,\East\}$ and and the pairings. The product $m_\rho(Q_1,\ldots,Q_n)$ is defined by applying $m'_\rho$ defined in \eqref{eq:preproduct} to the elements $Q_i'$.
	
	The spaces $(\ca{Q}_{p,q})_{(p,q)\in\ov{\PatternShapes}}$ endowed with the previous Schur products form a $\Guill_2$-algebra.
\end{theo}

The most interesting part in this theorem is that the operadic structure on the boundary \emph{requires} the shift structure $D_\bullet$ on the boundaries in order to be well-defined. We now exploit this structure to write down a fixed point equations for the elements 	\eqref{eq:fixedpoints:blocknotations} and their homologues on the other sides and corners. 

The proof is postponed to section~\ref{sec:proofguillot} for the reader interested in the operadic details.

In order to make things clearer and keep track of the structures that are required, all the products that require the use of the shift operators will be written with an additional exponent $D$, as in 
\begin{equation}	
	\begin{tikzpicture}[guillpart,yscale=1.5,xscale=1.5]
		\fill[guillfill] (0,0) rectangle (2,1);
		\draw[guillsep] (0,0)--(2,0)--(2,1)--(0,1) (1,0)--(1,1);
		\node at (0.5,0.5) {$Q_1$};
		\node at (1.5,0.5) {$Q_2$};
	\end{tikzpicture}_{\ca{Q}}^{D} 
	=
	(D^\West_{p}\oplus D^\West_{p}\oplus\id_{\ca{H}_2^q})
	\mathrm{Schur}_{\ca{H}_2^q}\left(
	j_{\begin{tikzpicture}[guillpart,yscale=0.45,xscale=0.5]
			\fill[guillfill] (0,0) rectangle (2,1);
			\draw[guillsep] (0,0)--(2,0)--(2,1)--(0,1) (1,0)--(1,1);
	\end{tikzpicture}}(Q_1,Q_2)
	\right)
	(D^L_{p}\oplus D^L_{p}\oplus\id_{\ca{H}_2^q})^{-1}
\end{equation}

\paragraph{Canonical realization of the half-line spaces}

Among all the spaces with the shift property from definition~\ref{def:shiftprop} and the pairing of definition~\ref{def:shiftpairingprop}, there are canonical examples associated to any Hilbert space $\ca{H}$ given by 
\begin{subequations}
	\label{eq:canonicalhalflinespaces}
	\begin{align}
		\ca{W}_{L}(\ca{H})&=l^2(\setZ_{<0};\ca{H})
		&
		\ca{W}_R(\ca{H})&=l^2(\setZ_{\geq 0},\ca{H})
		\\
		\ca{W}_{LR}(\ca{H})&=l^2(\setZ,\ca{H})
	\end{align}
\end{subequations}
with the following definitions of shifts and pairings, for any $w_L\in\ca{W}_L(\ca{H})$, $w_R\in\ca{W}_R(\ca{H})$ and $h\in\ca{H}^p$,
\begin{align*}
	\left(D_p^L(w_L,h)\right)(k) &= \begin{cases}
		h_{k+1+p} & \text{for $-p\leq k<0$} \\
		w_L(k+p) & \text{for $k<-p$}
	\end{cases}
	\\
	\left(D^R_p(h,w_R)\right)(k) &= \begin{cases}
		h_{k+1} & \text{for $0\leq k < p$} \\
		w_R(k-p) & \text{for $k\geq p$}
	\end{cases}
	\\
	\left(h(w_L,w_R)\right)(k)&= \begin{cases}
		w_L(k) & \text{for $k<0$} \\
		w_R(k) & \text{for $k\geq 0$}
	\end{cases}
\end{align*}
\begin{lemm}
	The spaces $\ca{W}_L(\ca{H})$, $\ca{W}_R(\ca{H})$ and $\ca{W}_{LR}(\ca{H})$ endowed with $D^L_\bullet$, $D^R_\bullet$ and $h$ satisfy definitions~\ref{def:shiftprop} and \ref{def:shiftpairingprop}.
\end{lemm}
Again this is only a possibility among many others, which provides suitable solutions for an infinite volume Gibbs measures. All the definitions are up to isomorphisms so there are many alternative candidates equivalent to this one, but for which the definition of the shifts may more obscure. An interesting question would be to classify all of them, at least all the ones that provides well-defined fixed point equations as given below with a non-trivial solution. The spaces above do not keep track of any information at infinity; however, for models with phase transitions or long-range order, such candidates are probably not valid.

\subsubsection{Fixed point equations on half-strips}\label{sec:fixedpointeqs}

We now consider the four spaces
\begin{subequations}
	\label{eq:halflinechoices}
	\begin{align}
		\ca{W}_\South &= \ca{W}_L(\ca{H}_2) 
		&
		\ca{W}_\North &= \ca{W}_R(\ca{H}_2) 	
		\\
		\ca{W}_\West &= \ca{W}_L(\ca{H}_1) 
		&
		\ca{W}_\East &= \ca{W}_R(\ca{H}_1) 	
		\\
		\ca{W}_{\South\North} &= \ca{W}_{LR}(\ca{H}_2)
		&
		\ca{W}_{\West\East} &= \ca{W}_{LR}(\ca{H}_1)
	\end{align}
\end{subequations}
with the space of quadratic forms \eqref{eq:hs:Qspaces} and \eqref{eq:corner:Qspaces} and see how these explicitly constructed spaces from the Gaussian Ansatz turn definitions~\ref{def:fixedpointuptomorphims} and \ref{def:fixedpointcorner} into concrete equations. 

\paragraph{Equations with trivial morphisms on the West side.}
\begin{theo}\label{theo:westfixedpoint}
	Let $\ov{Q}^{[\infty_\West,1]}$ be an element in $\ca{Q}_{\infty_\West,1}$ with short block notations
	\begin{align}
		\label{eq:hsfixed:blocknotations}
		\ov{Q}^{[\infty_\West,1]} &= \begin{pmatrix}
			B^\West_{[\South\North],[\South\North]} & B^\West_{[\South\North],\East} 
			\\
			B^\West_{\East,[\South\North]} &
			B^\West_{\East,\East}
		\end{pmatrix}
	\end{align}
	The quadratic form $\ov{Q}^{[\infty_\West,1]}$ is the West fixed point of $Q$ (with \emph{identity morphisms} $\phi_p^{\West,q,r}=\id$)
	if and only if, using $K=B^\West_{\East,\East}+Q_{\West,\West}$, we have for any $a,b\in\{\South,\North\}$
	\begin{subequations}
		\label{eq:Westfixedpoint:blocks}
		\begin{align}
			B^\West_{\East,\East} &= Q_{\East\East} - Q_{\East\West} K^{-1} Q_{\West\East}
			\label{eq:Westfixedpoint:1DEast}
			\\
			B^\West_{a,\East} &= D^{L}_{1} \begin{pmatrix}
				-B^\West_{a,\East} K^{-1} Q_{\West\East}
				\\
				Q_{a\East} - Q_{a\West}K^{-1} Q_{\West\East}
			\end{pmatrix}
			\label{eq:Westfixedpoint:transversal}
			\\
			B^\West_{a,b} &=
			D^L_{1}\left[
			\begin{pmatrix}
				B^\West_{a,b} & 0
				\\
				0 & Q_{ab}
			\end{pmatrix}
			-\begin{pmatrix}
				B^\West_{a,\East} \\ Q_{a\West}
			\end{pmatrix}
			K^{-1}
			\begin{pmatrix}
				B^\West_{\East b}& Q_{\West b}
			\end{pmatrix}
			\right](D^L_{1})^* 
			\label{eq:Westfixedpoint:SNtofold}
		\end{align}
	\end{subequations}
\end{theo}
The proof is a direct consequence of the shift structure. 

An important point is the triviality of the morphisms $\phi$ here in the definition. Firstly, considering identity morphisms is allowed by the definition but it may not necessarily lead to the existence of solutions to equations~\eqref{eq:Westfixedpoint:blocks}. Secondly, there are somewhat hidden behind the choice~\eqref{eq:halflinechoices} of the space $\ca{W}_\West$ with its shift maps $D^L_p$. As explained in \cite{Simon}, constructing suitable boundary spaces is not trivial excepted through disjoint unions, which do not lead to any closed system of equations: there is always a need for morphisms and/or renormalization choices in order to close the system of equations: this is done here by the simple $l^2$ Ansatz \eqref{eq:canonicalhalflinespaces} and, despite its apparent simplicity due to the triviality of the phase diagram of the Gaussian model, most of the work is done here. We however expect such simple Ans\"atze to be insufficient for models with non-trivial phases with long range order. Thirdly, the spaces $\ca{W}_a$ are defined only up to morphisms and, in other representations than \eqref{eq:halflinechoices}, the morphisms are less trivial (even if the Fourier space described below). It would very interesting to have a general algebraic theory to classify all the possible spaces with their morphisms.

The same type of proposition holds trivially on the East side in the second South-North direction with the suitable indices and shifts.

\paragraph*{Solving the equations \eqref{eq:Westfixedpoint:blocks}.}

We now see how the algebraic equations can be solved explicitly in the correct order.

First, the non-linear equation \eqref{eq:Westfixedpoint:1DEast} involves only the block $B^\West_{\East,\East}$ (on the l.h.s. and through $K$ in the r.h.s.), which is a finite-dimensional matrix. This equation is a Schur fixed point for a Gaussian Markov field on the one-dimensional lattice $\setZ$ (see below) and has been studied in details in \cite{Bodiot}.

The second step corresponds to the solution of \eqref{eq:Westfixedpoint:transversal}, which corresponds to a recursion. The block operator $B^\West_{a,\East}$ is a map $\ca{H}_2\to \ca{W}_\West$ and we define $(\gamma^{(a)}_k)_{k<0}$ as the unique sequence of operators $\ca{H}_2\to\ca{H}_1$ such that, for any $k<0$, $u_1\in\ca{H}_1$ and $u_2\in\ca{H}_2$, 
\[
\scal{u_1}{\gamma^{(a)}_k u_2}_{\ca{H}_1} = \scal{ u_1\indic{k} }{ B^\West_{a,\East} u_2 }_{\ca{W}_\West}
\]
\begin{lemm}\label{lemm:hs:recursion1}
	Given $B^\West_{\East,\East}$, the block $B^\West_{a,\East}$ for $a\in\{\South,\North\}$ is a solution of \eqref{eq:Westfixedpoint:transversal} if and only if the sequence $(\gamma^{(a)}_k)_{k<0}$ satisfies, with $K=B_{\East,\East}^{\West} + Q_{\West,\West}$,
	\begin{align*}
		\gamma^{(a)}_{-1} &=  Q_{a\East} - Q_{a\West} K^{-1} Q_{\West\East}
		&
		\gamma^{(a)}_{k-1} &= - \gamma^{(a)}_k K^{-1} Q_{\West\East}
	\end{align*}
	for $k<-1$, whose solution is a geometric sequence.
\end{lemm}

The other blocks of $\ov{Q}^{[\infty_\West,1]}$ are infinite-dimensional but can be indexed by $\setZ_{<0}$. We have a trival isomorphism $\ca{W}_\West^2 \simeq l^2(\setZ_{<0})\otimes \ca{H}_1^2$ through $(u \indic{k},0)\simeq \indic{k} \otimes (u,0)$ and $(0,u \indic{k})\simeq \indic{k} \otimes (0,u)$. The operator  $B^\West_{[\South\North],[\South\North]}\in \End(\ca{W}_\West^2)$ is then characterized by the unique sequence of operators $(\beta_{k,l})_{k,l<0}$ in $\End(\ca{H}_1^2)$ such that, for all $u,v\in \ca{H}_1^2$ and $k,l<0$,
\begin{equation}\label{eq:def:blockindicnotations}
	\scal{\begin{pmatrix}u_1\\u_2\end{pmatrix}}{\beta_{k,l}\begin{pmatrix}v_1\\v_2\end{pmatrix}}_{\ca{H}_1^2}
	=
	\scal{ (u_1 \indic{k},u_2\indic{k}) }{ B^\West_{[\South\North],[\South\North]} (v_1 \indic{l},v_2\indic{l} ) }_{\ca{W}_\West^2}
\end{equation}
\begin{lemm}\label{lemm:hs:recursion2}
	Given $B^\West_{\East,\East}$ and $B^\West_{a,\East}$ for $a\in\{\South,\North\}$, the block $B^\West_{[\South\North],[\South\North]}$ is a solution of \eqref{eq:Westfixedpoint:SNtofold} if and only if the sequence $(\beta_{k,l})_{k,l<0}$ satisfies, with $K=B_{\East,\East}^{\West} + Q_{\West,\West}$,
	\begin{equation}
		\beta_{k-1,l-1} = \beta_{k,l} - \begin{pmatrix}
			\gamma^{(\South)}_{k} \\ \gamma^{(\North)}_{k}
		\end{pmatrix} K^{-1} \begin{pmatrix}
			\gamma^{(\South)}_{l} & \gamma^{(\North)}_{l}	\end{pmatrix}  
	\end{equation}
	with boundary conditions
	\begin{align*}
		\beta_{-1,-1} &= Q_{[\South\North],[\South\North]}-Q_{[SN],\West} K^{-1} Q_{\West,[\South\North]}  &  
	\end{align*}
	\begin{align*}
		\beta_{-1,l-1} &= -Q_{[\South\North],\West} K^{-1} \begin{pmatrix}
			\gamma^{(\South)}_{l} & \gamma^{(\North)}_{l}
		\end{pmatrix}  
		&
		\beta_{k-1,-1} &= - \begin{pmatrix}
			\gamma^{(\South)}_{k} \\\gamma^{(\North)}_{k}
		\end{pmatrix} K^{-1} Q_{\West,[\South\North]}  
	\end{align*}
\end{lemm}
In this case the boundary terms $\beta_{k,-1}$ and $\beta_{-1,l}$ are also geometric and the generic terms $\beta_{k,l}$ are sums of (matricial) geometric terms and all of them can be written using $Q$ and $B^{\West}_{\East,\East}$, which is a finite-dimensional matrix satisfying a non-linear equation studied in \cite{Bodiot}.

These lemma are important since they show how the Gaussian Ansatz of quadratic forms in the boundary structure with the additional hypothesis \eqref{eq:halflinechoices}, which realizes the shift operators without any information coming from infinity, translate the definition up to fixed points into concrete and solvable equations.

\paragraph{Other Ans\"atze as exercises}
We left to the reader the following interesting exercise. Instead of \eqref{eq:halflinechoices}, one may have thought of finite dimensional spaces $\ca{W}_a$. For example, one may think of $\ca{W}_\West = \setC^d$ for some fixed dimension $d$. The maps $D^L_\bullet$ are all generated by $D^L_1 (w,h) = Rw+Sh$ with $R\in\Mat_{d,d}(\setC)$ and $S\in\Mat_{d,d_1}$ are finite matrices to be found. Such a choice also translates equations~\eqref{eq:Westfixedpoint:blocks} to be written and solved and one then observes that generically no solution exists (but it may be the case for some special points).

\subsubsection{The corner fixed points}
\paragraph{From operadic fixed points to matricial equations}
We now focus on the corner elements, which are new elements without any one-dimensional analogue. The following proposition presents the fixed point equations for the South-West corner element $Q^{[\infty_\West,\infty_\South]}$ once the West and South half-strips fixed points are known. At first sight, the following equations are of the same type as the one in theorem~\ref{theo:westfixedpoint} but the interesting points is that the system of equations on the corner blocks is \emph{overdetermined}: the consequence is that it will add further constraints between the West and the South half-strip fixed points, hence validating or invalidating the choice of spaces \ref{eq:halflinechoices} and the Gaussian Ansatz.

\begin{theo}\label{theo:corner:charactblocks}
	A triplet of elements $(\ov{Q}^{[\infty_\West,1]}, \ov{Q}^{[1,\infty_\South]},\ov{Q}^{[\infty_\West,\infty_\South]})$ in $\ca{Q}_{\infty_\West,1}\times \ca{Q}_{1,\infty_\South}\times \ca{Q}_{\infty_\West,\infty_\South}$ forms a South-West corner of fixed points of the semi-group generated by $Q$ (with identity morphisms $\phi_{\infty_a}^{b,\bullet,\bullet}=\id$) if and only if $\ov{Q}^{[\infty_\West,1]}$ satisfies theorem~\ref{theo:westfixedpoint}, $\ov{Q}^{[1,\infty_\South]}$ satisfies theorem~\ref{theo:westfixedpoint} up to a index change to match the South direction and the corner element $\ov{Q}^{[\infty_\West,\infty_\South]}$ acting on $\ca{W}_\West\times \ca{W}_{\South}$ with block notation ($C$ as "corner") acting on $\ca{W}_\West\times \ca{W}_{\South}$
	\begin{equation}
		\label{eq:corner:blocknames}
		\ov{Q}^{[\infty_\West,\infty_\South]}
		=\begin{pmatrix}
			C^{\South\West}_{\East,\East} & C^{\South\West}_{\East,\North} \\
			C^{\South\West}_{\North,\East} & C^{\South\West}_{\North,\North} 
		\end{pmatrix}
	\end{equation}
	satisfies the two fixed point equations
	\begin{align*}
		\ov{Q}^{[\infty_\West,\infty_\South]}&=
		\begin{tikzpicture}[guillpart,yscale=1.15,xscale=2.5]
			\fill[guillfill] (0.5,0) rectangle (3,1);
			\draw[guillsep] (0.5,1)--(3,1)--(3,0) (2,0)--(2,1);
			\node at (1.25,0.5) {$\ov{Q}^{[\infty_\West,\infty_\South]}$};
			\node at (2.5,0.5) {$\ov{Q}^{[1,\infty_\South]}$};
		\end{tikzpicture}_{\ca{Q}}^{D^L} 
		&
		\ov{Q}^{[\infty_\West,\infty_\South]}&=
		\begin{tikzpicture}[guillpart,yscale=1.15,xscale=3.25]
			\fill[guillfill] (0,1) rectangle (1,3);
			\draw[guillsep] (1,1)--(1,3)--(0,3) (0,2)--(1,2);
			\node at (0.5,1.5) {$\ov{Q}^{[\infty_\West,\infty_\South]}$};
			\node at (0.5,2.5) {$\ov{Q}^{[\infty_\West,1]}$};
		\end{tikzpicture}_{\ca{Q}}^{D^L} 
	\end{align*}
	which can be rewritten, using the invertible elements $L_2 = C^{\South\West}_{\East,\East}+B^{\South}_{\West,\West} $ and $L_1 = C^{\South\West}_{\North,\North}+B^{\West}_{\South,\South}$, as
	\begin{subequations}
		\label{eq:cornerfixedpoint:blocks}
		\begin{align}
			\label{eq:corner:fixed:EE:1D}
			C^{\South\West}_{\East,\East} &= B^{\South}_{\East,\East} - B^{\South}_{\East,\West} L_2 ^{-1}B^{\South}_{\West,\East}
			\\
			\label{eq:corner:fixed:nondiagonal:NE}
			C^{\South\West}_{\North,\East} &= (C^{\South\West}_{\East,\North})^*  = D_1^L 
			\begin{pmatrix}
				-C^{\South\West}_{\North,\East}L_2^{-1} B^{\South}_{\West,\East} \\
				B^{\South}_{\North,\East}- B^{\South}_{\North,\West}L_2^{-1} B^{\South}_{\West,\East}
			\end{pmatrix}
			\\
			\label{eq:corner:fixed:NN:shiftinv}
			C^{\South\West}_{\North,\North} &= D_1^L\left[ 
			\begin{pmatrix}
				C^{\South\West}_{\North,\North} & 0 \\
				0 & B^{\South}_{\North,\North}
			\end{pmatrix}-
			\begin{pmatrix}
				C^{\South\West}_{\North,\East} \\
				B^{\South}_{\North,\West}
			\end{pmatrix}
			L_2^{-1}
			\begin{pmatrix}
				C^{\South\West}_{\East,\North}, 
				B^{\South}_{\West,\North}
			\end{pmatrix}
			\right](D_1^L)^*
		\end{align}
		for the first one and as
		\begin{align}
			\label{eq:corner:fixed:NN:1D}
			C^{\South\West}_{\North,\North} &= B^{\West}_{\North,\North}- B^{\West}_{\North,\South} L_1^{-1} B^{\West}_{\South,\North}
			\\
			\label{eq:corner:fixed:nondiagonal:EN}
			C^{\South\West}_{\East,\North} &= (C^{\South\West}_{\North,\East})^*
			=D_1^L \begin{pmatrix}
				-C^{\South\West}_{\East,\North}L_1^{-1} B^{\West}_{\South,\North}
				\\
				B^{\West}_{\East,\North}-B^{\West}_{\East,\South}L_1^{-1}B^{\West}_{\South,\North}
			\end{pmatrix}
			\\
			\label{eq:corner:fixed:EE:shiftinv}
			C^{\South\West}_{\East,\East} &= D_1^L\left[
			\begin{pmatrix}
				C^{\South\West}_{\East,\East} & 0 \\
				0 & B^{\West}_{\East,\East}
			\end{pmatrix}
			-
			\begin{pmatrix}
				C^{\South\West}_{\East,\North}
				\\
				B^{\West}_{\East,\South}
			\end{pmatrix}
			L_1^{-1}
			\begin{pmatrix}
				C^{\South\West}_{\North,\East}
				,
				B^{\West}_{\South,\East}
			\end{pmatrix}
			\right] (D_1^L)^*
		\end{align}
		for the second one.
	\end{subequations}
\end{theo}
\begin{proof}
	This is a direct rewritting of definitions~\ref{def:fixedpointuptomorph:leftextended} and \ref{def:fixedpointcorner} adapted to the present case of quadratic forms with Schur complements.
\end{proof}
Each block appears twic $C^{\South\West}_{aa}$ appears twice: once as a one-dimensional fixed point equation with \eqref{eq:corner:fixed:EE:1D} or \eqref{eq:corner:fixed:NN:1D} (see \cite{Bodiot}) under the 1D Gaussian dynamics induced by the half-strip diagonal blocks, once as shift-invariant elements with \eqref{eq:corner:fixed:EE:1D} or \eqref{eq:corner:fixed:NN:1D} which uses the non-diagonal blocks $C^{\South\West}_{ab}$, $a\neq b$, which are intimately related to the two-dimensional structure and the square associativity as explained in the next section.

\paragraph{Solving equations~\eqref{eq:cornerfixedpoint:blocks}}

Equations \eqref{eq:corner:fixed:EE:1D} and \eqref{eq:corner:fixed:NN:1D} are fixed points under Schur complements and be studied directly through \cite{Bodiot}. We will also see in section~\ref{sec:foldingsetc} that, in the generic case, there exists a unique solution with alternative explicit representations inherited from the Fourier transform.

The four other equations in \eqref{eq:cornerfixedpoint:blocks} can be described by recursions with lemmata similar to lemmata~\ref{lemm:hs:recursion1} and~\ref{lemm:hs:recursion2} using the shift structure $D^L_\bullet$ of the spaces $\ca{W}_a$. We do not reproduce them here in order to gain some place but there is no difficulty in it.

The most important and difficult point to check is that these explicit solutions obtained by recursion are consistent with each other since there are six equations for three unknown blocks. There are two points of view.
\begin{itemize}
	\item from a concrete numerical perspective: for a given model with a specified matrix $Q$, it is a quick task to solve, at least on a computer, the previous corner equations by the various recursions and check that the results coincide for a large subset of indices. It allows at least to quickly invalidate the Ansatz \eqref{eq:halflinechoices} or push forward the computations.
	\item from a abstract rigorous perspective, we can solve the various recursions explicitly (this is feasible formally) and use remarkable identities to switch from one of the representations to another. This can be done but is not enlightening at all. However, all the remarkable identities can be proved using two new tools introduced in the following section in the generic case (the missing cases are just much more technical but presenting them does not add any new concept nor method): folding and square associativity. Therefore, we prefer stop here the study of the recursions for themselves and come back to a wider operadic picture.
\end{itemize}

\subsubsection{Infinite-volume Gibbs measures out of fixed points}

Given solutions of the previous fixed point equations on the four halpf-strips and the four corners, it is then easy to obtain formally the boundary weights $g^{(\Lambda)}_{p,q}$ and hence the infinite volume Gibbs measure. We detail quickly here the computations.

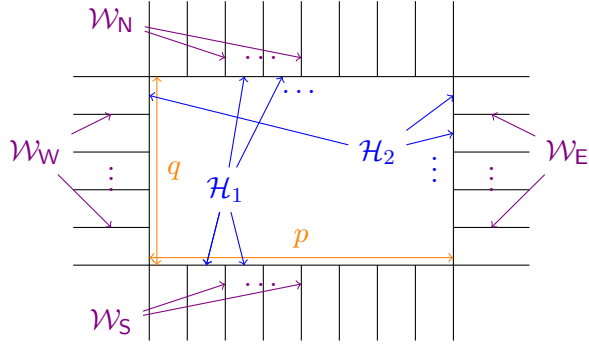
\begin{figure}
	\begin{center}
		\begin{tikzpicture}[scale=0.5]
			\draw (0,0) rectangle (8,5);
			\foreach \x in {0,1,...,8} {
				\draw (\x,0)--(\x,-2);
				\draw (\x,5)--(\x,7);
			}
			\foreach \y in {0,1,...,5} {
				\draw (-2,\y)--(0,\y);
				\draw (8,\y)--(10,\y);	
			}
			\draw[<->, orange] (0,0.2)-- node [above] {$p$} (8,0.2);
			\draw[<->,orange ] (0.2,0)-- node [right] {$q$} (0.2,5);
			\node (H2) at (6,3) [blue] {$\ca{H}_2$};
			\draw[blue,->] (H2) -- (8,4.5);
			\draw[blue,->] (H2) -- (8,3.5);
			\draw[blue,->] (H2) -- (0,4.5);
			\node at (7.5,2.75) [blue] {$\vdots$};
			\node (H1) at (2,2) [blue] {$\ca{H}_1$};
			\draw[blue,->] (H1)--(1.5,0);
			\draw[blue,->] (H1)--(2.5,0);
			\draw[blue,->] (H1)--(2.5,5);
			\draw[blue,->] (H1)--(3.5,5);
			\node at (4,4.6) [blue] {$\dots$};
			\draw[blue,->] (H1)--(1.5,0);
			\node (WE) at (11,3) [violet] {$\ca{W}_\East$};
			\draw[violet,->] (WE) -- (9,4);
			\draw[violet,->] (WE) -- (9,1);
			\node at (9,2.5) [violet] {$\vdots$};
			\node (WW) at (-3,3) [violet] {$\ca{W}_\West$};
			\draw[violet,->] (WW) -- (-1,4);
			\draw[violet,->] (WW) -- (-1,1);
			\node at (-1,2.5) [violet] {$\vdots$};
			\node (WS) at (-1,-1.5) [violet] {$\ca{W}_\South$};
			\draw[violet,->] (WS) -- (2,-0.5);
			\draw[violet,->] (WS) -- (4,-0.5);
			\node at (3,-0.5) [violet] {$\dots$};
			\node (WN) at (-1,6.5) [violet] {$\ca{W}_\North$};
			\draw[violet,->] (WN) -- (2,5.5);
			\draw[violet,->] (WN) -- (4,5.5);
			\node at (3,5.5) [violet] {$\dots$};
		\end{tikzpicture}
	\end{center}
	\caption{\label{fig:peigne}Geometry for the reconstruction of the boundary weights from the generalized eigen-elements on half-strips and corners. Suitable Hilbert spaces are attached to each segment and each half-lines, depending on their orientation.}
\end{figure}

Given a rectangle with size $(p,q)$, we consider the geometry of figure~\ref{fig:peigne} with $p+1$ half-line to the North and to the South and $q+1$ half-lines to the West and to the East. For each $(p,q)\in \setN_1^2$, we introduce a Hilbert space as the orthogonal sum:
\[
\ca{V}_{p,q} = \left(\ca{W}_\South^{p+1}\oplus\ca{W}_{\North}^{p+1}\oplus \ca{W}_\West^{q+1}\oplus\ca{W}_\East^{q+1}\right) \oplus \left(\ca{H}_1^{2p} \oplus\ca{H}_2^{2q}\right)
=\ca{W}^{(out)}_{p,q} \oplus \ca{H}^{(b)}_{p,q}
\]
where each $\ca{W}_a$ corresponds to a suitable half-line and each $\ca{H}_i$ to a segment with length one. We then define a new quadratic form on this space by using the fixed point of theorems~\ref{theo:hs:fixedpoint:formulae} and \ref{theo:corner:fixedpoint:formulae}:
\begin{equation}
	\begin{split}
		Q^{out}_{p,q} =& \sum_{i=1} \left(\iota^{\South}_i( B^\South) + \iota^{\North}_i( B^\North) \right) + \sum_{j=q} \left(\iota^{\West}_j( B^\West) + \iota^{\East}_j( B^\East) \right)
		\\
		&+ \iota^{\South\West}(C^{\South\West}) 
		+\iota^{\South\East}(C^{\South\East})
		+\iota^{\North\West}(C^{\North\West})
		+\iota^{\North\East}(C^{\North\East})
	\end{split} 
\end{equation}
where $\iota^a_k(B)$ is a quadratic form on $\ca{V}_{p,q}$ defined as the action of $B$ on the three subspaces that corresponds to the three boundaries of the $i$-half-strip in direction $a$ (two half-lines and a segment) and $\iota^{ab}(C)$ acts only the two subspaces corresponding to the boundary of the corresponding corner. The quadratic form $Q^{out}_{p,q}$ on $\ca{V}_{p,q}$ is positive definite (and bounded away from zero) and define a centered Gaussian process on $\ca{V}_{p,q}$. Taking the Schur complement $Q^{out}_{p,q}$ w.r.t. 
$\ca{W}^{(out)}_{p,q}$ can be done sequentially over the half-lines and corresponds to the product of the boundary elements within $\ca{Q}_{\bullet}$: it then produces a quadratic form
\begin{equation}
	\label{eq:quadraticformfromROPE}
	Q^{\partial}_{p,q} = \mathrm{Schur}_{\ca{W}^{(out)}_{p,q}}( Q^{out}_{p,q} )
\end{equation}
that can be used to define directly the boundary weights $g^{(\Lambda)}_{p,q}$ through a Gaussian density. We thus have one of the main result of the paper, which solves the first question raised in the introduction and which is the first case of construction of a Gibbs measure directly issued from the Markov property and from concrete exact solvable fixed point equations in dimension strictly larger than $1$.

\begin{theo}\label{theo:kolmogorovwithboundaryweights}
	The collection of boundary weights $g^{(\Lambda)}_{p,q}$ for any rectangle with non-degenerate sizes $(p,q)$ defined in \eqref{eq:boundaryweight:quadraticform} from the quadratic forms $Q^{\partial}_{p,q}$  obtained in \eqref{eq:quadraticformfromROPE} from the fixed-points described in theorems~\ref{theo:hs:fixedpoint:formulae} and \ref{theo:corner:fixedpoint:formulae}, when a solution exists (see below) is well-defined generically and forms a projective limit (under restriction of probability laws from a rectangle to an included smaller one) and defines, by Kolmogorov's extension theorem, a infinite-volume Gibbs measure.
\end{theo}
\begin{proof}
	We first assume that the solutions given by theorems~\ref{theo:westfixedpoint} and \ref{theo:corner:charactblocks} are well-defined and belong to the spaces $\ca{Q}_\bullet$: these are consequences of theorems~\ref{theo:hs:fixedpoint:formulae} and \ref{theo:corner:fixedpoint:formulae} below. Then, the consistency of the boundary weights for growing rectangles are a direct consequence of the fixed point property under the Schur complement (the scalar coefficient in front of the density are indeed irrelevant and can absorbed into normalizations; they can however be given a meaning as seen in section~\ref{sec:eigenval} below). The infinite-volume Gibbs measure is then a direct consequence of \cite{Simon}.
	
	Showing that the solutions of the fixed point equations are well-defined and belong to the spaces $\ca{Q}_\bullet$ requires to prove suitable analytic bounds (bounded operator and the additional condition in $\ca{Q}_{\infty}$, which induces that the inverse exists  and is also bounded) and are also consequences of theorems~\ref{theo:hs:fixedpoint:formulae} and \ref{theo:corner:fixedpoint:formulae} below. This can be done under suitable generic hypotheses on the coupling matrix $Q$ directly using the recursive formulae but it is lengthy and quite obscure: we prefer present in the next section much more intuitive and direct results of existence of solutions by relating the fixed points inherited from the operadic structure to other intuitive objects that are well-known in other approaches in the literature.
\end{proof}

\section{From fixed points up to morphisms to analytic solutions and back}\label{sec:foldingsetc}

We have seen in theorems~\ref{theo:westfixedpoint} and \ref{theo:corner:charactblocks} and theorem~\ref{theo:kolmogorovwithboundaryweights} how solving simple recursions provide consistent boundary weights and an infinite-volume Gibbs measure. 

In the present case under generic hypothesis, we know for a long time from \cite{georgii} that, there is a unique translation-invariant infinite-volume Gibbs measure and thus the boundary weights obtained in the previous section shall coincide with the one inherited from \cite{georgii} (which is obtained through Fourier transform in a non-local way). Nonetheless these boundary weights are not written explicitly in \cite{georgii} and the correpondence is not straightforward. The present section makes these relations explicit.

The first  interesting points is that the various blocks $B^{c}_{ab}$ and $C^{c,d}_{a,b}$ that emerge from the operadic fixed points acquire an interesting structure related to various other properties of the model.

The second interesting point is the major roles played by two fundamental operations: folding and square associativity. Both of them have a geometrical nature deeply related to the guillotine operad and provide additional algebraic tools. The main question raised by the computations presented below with these two concepts is whether they can be adapted to other non-trivial models.

\subsection{A simplifying hypothesis of symmetry}

The purpose of the present paper is to introduce a general method and not to focus on specific features of singular models. Thus, we introduce additional generic hypothesis on the face coupling matrix $Q$ in order to have simplified notations. We however insist that these hypotheses can be avoided by using more involved variants of the present method.

The first assumption is the one of dihedral invariance. The square lattice $\setZ^2$ is invariant under the dihedral group, i.e. the group generated by the rotation of angle $\pi/2$ and the orthogonal symmetry with respect to the first diagonal. The definitions above of half-strip fixed points and corner fixed points are presented only for the South and West directions but similar definitions and properties can be formulated in the other directions and corners: the assumption of dihedral invariance for $Q$ avoids the need to write specific computations for each direction and each corner with heavy index notations. Moreover, it will also simplify the definition of foldings below.

\begin{lemm}
	A face operator $Q$ is invariant under the dihedral group if and only if both Hilbert spaces $\ca{H}_1$ and $\ca{H}_2$ are equal to some Hilbert space $\ca{H}$ and there exist three self-adjoint operators $T$, $A$ and $U$ acting on $\ca{H}$ such that (see figure~\ref{fig:Qblockstruct})
	\begin{equation}\label{eq:faceoperator:dihedralinvariant}
		Q= \begin{pmatrix}
			T & A & U & U \\
			A & T & U & U \\
			U & U & T & A \\
			U & U & A & T 
		\end{pmatrix}.
	\end{equation}
\end{lemm}
Every time a result of the present paper requires dihedral invariance, we will refer to it through a reference to the following assumption

\begin{assumptionp}{(Dihedral invariance)}\label{assump:dihedral}
	The face operator $Q$ is invariant under the dihedral group and thus admits the representation \eqref{eq:faceoperator:dihedralinvariant} with three self-adjoint operators $T$, $A$ and $U$ acting on $\ca{H}=\ca{H}_1=\ca{H}_2$.
\end{assumptionp}

\subsection{The standard approach by Fourier transform}\label{sec:fourier}
\paragraph*{Fourier transforms}
We quickly summarize the approach to the infinite-volume Gibbs measure by the Fourier transform and introduce all useful notations. For any function $f: \setZ \to \setC$, we define the Fourier transform $\ha{f}: S^1 \to \setC$ by 
\[
\ha{f}(e^{i\theta}) = \sum_{k\in\setZ} f(k) e^{i \theta k}
\]
with inverse Fourier transform given by $f(k)=(2\pi)^{-1} \int_0^{2\pi} f(e^{i\theta})e^{-i\theta k}d\theta$. Using the linear structure of Gaussian processes and the translation invariance of the face weight on the square lattice, the random field $(X_e)$ can be written in the Fourier space and it is easy to show that Fourier modes are independent. In order to describe the covariance structure, we introduce the function:
\begin{equation}\label{eq:psifunction:hv}
	\begin{split}
		\Psi_Q  :  \setC^*\times\setC^* &\to \End(\ca{H}_1\oplus \ca{H}_2 )\\
		(z,w) &\mapsto 
		\begin{pmatrix}
			(\Psi_Q(z,w))_{1,1}	 & (\Psi_Q(z,w))_{1,2}\\
			(\Psi_Q(z,w))_{2,1} &  (\Psi_Q(z,w))_{2,2}
		\end{pmatrix}
	\end{split}
\end{equation}
where the four blocks are given by:
\begin{subequations}
	\label{eq:def:1Dphi}
	\begin{align}
		(\Psi_Q(z,w))_{1,1} &=	\phi^{\South\North}_Q(w) = Q_{\South\South} +  Q_{\North\North} + w Q_{\South\North}  + w^{-1} Q_{\North\South}   
		\\
		(\Psi_Q(z,w))_{2,2} &= \phi^{\West\East}_{Q}(z) = Q_{\West\West}+Q_{\East\East} + z Q_{\West\East} +z^{-1}Q_{\East\West} 
		\\ 
		(\Psi_Q(z,w))_{1,2} &= Q_{\South\West}+z Q_{\South\East} + w^{-1} Q_{\North\West}+ w^{-1} z Q_{\North\East}
		\\
		(\Psi_Q(z,w))_{2,1}&= Q_{\West\South}+z^{-1} Q_{\East\South} + w Q_{\West\North} + w z^{-1} Q_{\East\North}
	\end{align}
\end{subequations}
The two diagonal blocks are related to the meromorphic function $\Phi_A$ of \cite{Bodiot} and related to the one-dimensional processes in $\ca{H}_1$ and $\ca{H}_2$ living on columns and rows of the two-dimensional lattice $\setZ^2$ with coupling matrices given by $Q_{[\South\North],[\South\North]}$ and $Q_{[\West\East],[\West\East]}$ respectively. 

We introduce for the strip and half-plane geometry below the partial Fourier coefficients of order $k\in\setZ$ as
\begin{align*}
	\Fourier^{h}_{k,\bullet}(\ha{a})& = \frac{1}{2\pi}\int_0^{2\pi} \ha{a}(e^{i\theta},\cdot)e^{-ik\theta}d\theta & 	\Fourier^{v}_{\bullet, k}(\ha{a})& = \frac{1}{2\pi}\int_0^{2\pi} \ha{a}(\cdot,e^{i\theta})e^{-ik\theta}d\theta.
\end{align*}
which are functions on the circle $S^1$ and we will often write
$\Fourier^h_{k,\bullet}(\ha{a})(e^{i\varphi}) = \Fourier^h_{k,\varphi}(\ha{a})$. We define also the full Fourier transform
\[
\Fourier_{k,l}(\ha{a}) = \frac{1}{(2\pi)^2}\int_{S^1\times S^1} \ha{a}(e^{i\theta},e^{i\phi}) e^{-ik\theta-il\phi} d\theta\,d\phi
\]
We finally introduce, for any $k,l\in\setZ$ and any $i,j\in\{1,2\}$, the coefficients
\begin{align}\label{eq:Fourier_coeff_Psi}
	\CPsiInv_{k,l}&= \Fourier_{k,l}(\Psi_Q^{-1})
	&
	\CPsiInv_{k,l}^{i,j}&= \Fourier_{k,l}((\Psi_Q^{-1})_{i,j})
\end{align}
and the following short notations for the partial Fourier coefficients of this function, for all $1\leq i,j\leq 2$ and $k$, $l\in\setZ$.
\begin{subequations}
	\label{eq:Partial_Fourier_coeff_Psi}
	\begin{align}
		\CPsiInv_{k,\bullet} &= \Fourier^h_{k,\bullet}(\Psi_Q^{-1}) 
		&	
		\CPsiInv_{\bullet,k} &= \Fourier^v_{\bullet, k}(\Psi_Q^{-1}) 
		\\	
		\CPsiInv_{k,\bullet}^{i,j} &= \Fourier^h_{k,\bullet}((\Psi_Q^{-1})_{i,j}) 
		& 
		\CPsiInv_{\bullet,k}^{i,j}&= \Fourier^v_{\bullet,k}((\Psi_Q^{-1})_{i,j}).
	\end{align}
\end{subequations}

\paragraph*{Known results about the infinite volume Gibbs measure}
We recall here the main construction from \cite{georgii}.
\begin{theo}[(existence, unicity and covariance, direct consequence of chapter~13 \cite{georgii})]\label{theo:georgii}
	Let $Q$ be a positive definite Hermitian matrix such that the function $\Psi_Q : S^1\times S^1 \to \End(\ca{H}_1\oplus\ca{H}_2)$ defined in \eqref{eq:psifunction:hv} satisfies $\Psi_Q(e^{i\theta},e^{i\phi})$ is positive definite for all $\theta,\phi\in[0,2\pi]$.
	There exists a unique translation-invariant infinite Gibbs measure for the  Gaussian Markov random field defined by the face coupling matrix $Q$ in definition~\ref{def:GaussMarkovProc:via:density}. 
	
	In the Fourier space, it is given by the isometry $\ha{\ca{V}} \to L^2(\Omega,\ca{G},\Prob)$ where $\ca{V}$ is the Hilbert space of functions $S^1\times S^1\to \ca{H}_1\oplus\ca{H}_2$ with the inner product
	\[
	\scal{\ha{f}}{\ha{f}}_{\ha{\ca{V}}} = \frac{1}{4\pi^2} \int_{[0,2\pi]^2} \scal{\ha{f}(\theta,\phi)}{\Psi_Q^{-1}(\theta,\phi)\ha{f}(\theta,\phi)}_{\ca{H}_1\oplus\ca{H}_2} 
	d\theta d\phi
	\]
	On the lattice $\setZ^2$, it is given by the isometry $\ca{V} \to L^2(\Omega,\ca{G},\Prob)$ where $\ca{V}$ is the Hilbert space of functions $\setZ^2\to \ca{H}_1\oplus\ca{H}_2$ with the inner product
	\[
	\scal{f}{f}_{\ca{V}} = \sum_{\substack{(k,l)\in\setZ^2 \\ (k',l')\in\setZ^2}} \scal{f(k,l)}{ \gr{C}_{k-k',l-l'} f(k',l')} 
	\]
	where the matrices $\gr{C}_{k,l}$ are defined above in \eqref{eq:Fourier_coeff_Psi}
\end{theo}
In particular, one observes that the free energy density $f=\log\Lambda$ is given here by
\begin{equation}\label{eq:eigenval:expression}
	\Lambda = (2\pi)^{d_1+d_2} \exp\left( -\frac{1}{(2\pi)^2}\int_{[0,2\pi]^2}\log\det\Psi_Q(e^{i\theta_1},e^{i\theta_2})d\theta_1d\theta_2 \right)
\end{equation}
that we recover by an operadic renormalization approach in section~\ref{sec:eigenval}.

Such a construction is intimately related to the full plane geometry (top of figure~\ref{fig:admissiblepatterns}) or the torus one (with restriction of the Fourier modes to the roots of unity) since the Fourier transform is a global transformation not suited for the study of boundaries and local gluing operations.

Theorem~\ref{theo:georgii} is valid without any further hypothesis. However, in order to hide some feaible but lengthy computations due to degeneracies in specific models, we will work in most of the section under a further genericity assumption inherited of \cite{Bodiot}.
\begin{assumptionp}{(2Dsimple)}\label{assump:2Deasy}
	The face operator $Q$ satisfy:
	\begin{enumerate}[(i)]
		\item the zeroes of $\det \phi_{Q}^{\South\North}(w)$ have multiplicity one and, for each of them, $\dim \ker \phi_{Q}^{\South\North}(w) = 1$;
		
		\item the zeroes of $\det \phi_{Q}^{\West\East}(z)$ have multiplicity one and, for each of them, $\dim \ker \phi_{Q}^{\West\East}(z) = 1$;
		
		\item the zeroes $(z,w)$ of  $\det \Psi_Q(z,w)$ have multiplicity one, lie outside $S^1\times S^1$ and, for each of them,
		$\dim\ker \Psi_Q(z,w)=1$.
	\end{enumerate}
\end{assumptionp} 

\subsection{Transfer matrix: from cylinders to strips and half-planes}
\subsubsection{Strip elements in the Fourier basis} \label{sec:cylindertostrip}
The standard approach to many models of 2D statistical mechanics is the transfer matrix on cylinders: it corresponds to the computation of the surface power
\[
T_{\West\East,p}=\begin{tikzpicture}[guillpart,yscale=1.15,xscale=1.75]
	\fill[guillfill] (0,0) rectangle (4,1);
	\draw[guillsep] (4,0)--(0,0)--(0,1)--(4,1)--cycle ;
	\draw[guillsep]	(1,0)--(1,1) (2,0)--(2,1) (3,0)--(3,1);
	\node at (0.,0.5)  {$\bullet$};
	\node at (0.5,0.5) {$\ee_{1,Q}$};
	\node at (1.5,0.5) {$\ee_{1,Q}$};
	\node at (2.5,0.5) {$\dots$};
	\node at (3.5,0.5) {$\ee_{1,Q}$};
	\node at (4,0.5)  {$\bullet$};
\end{tikzpicture}
\]
with the identification of the two opposite boundary vertical edges and integration over the corresponding variable. As before, this can be lifted at the level of quadratic form and we define
\[
Q^{\patterntype{cyl}_{\West\East}}_{p}=\begin{tikzpicture}[guillpart,yscale=1.15,xscale=1.65]
	\fill[guillfill] (0,0) rectangle (4,1);
	\draw[guillsep] (4,0)--(0,0)--(0,1)--(4,1)--cycle ;
	\draw[guillsep]	(1,0)--(1,1) (2,0)--(2,1) (3,0)--(3,1);
	\node at (0.,0.5)  {$\bullet$};
	\node at (0.5,0.5) {$Q$};
	\node at (1.5,0.5) {$Q$};
	\node at (2.5,0.5) {$\dots$};
	\node at (3.5,0.5) {$Q$};
	\node at (4,0.5)  {$\bullet$};
\end{tikzpicture}_{\ca{Q}}
\]
as an operator on $\ca{H}_1^{2p}$. From the operadic point of view, it corresponds to the pointed pattern shape $\patterntype{cyl}_{\West\East}^*$ of \cite{Simon} (section 3.5.6.2) with colours $(p,1,h)$ (with a base point put on the vertical cut used to glue the opposite boundaries). Diagonalizing the operator $T_p$ is easy through discrete Fourier transform: modes are independent and are described by a vertical 1D dynamics (see \cite{Bodiot} for details) and the eigenvalue $\lambda_p$ is similar to \eqref{eq:eigenval:expression} with the integral replaced by a Riemann sum over the $p$-th roots of unity. 

After Fourier transform, it is easy to see that the quadratic form $Q^{\patterntype{cyl}_{\West\East}}_{p}$ act on $l^2(\mathbb{U}_p;\ca{H}_1^2)$ pointwise through:
\[
(Q^{\patterntype{cyl}_{\West\East}}_{p}u)(e^{2i\pi k/p})= \ca{S}^Q_{\West\East}(e^{2i\pi k/p}) u(e^{2i\pi k/p}) 
\]
with operators $\ca{S}^Q_{\West\East}: S^1 \to \End(\ca{H}_1^2)$ ($\ca{S}$ for "strip") given by
\begin{align*}
	\ca{S}^Q_{\West\East}(z) &= {\small Q_{[\South\North],[\South\North]} -(Q_{[\South\North],[\West]}+Q_{[\South\North],[\East]}z)\phi^{\West\East}_{Q}(z)^{-1}(Q_{[\West],[\South\North]}+Q_{[\East],[\South\North]}z^{-1})}
	\\
	&= \mathrm{Schur}_{\ca{H}_2} 
	\begin{pmatrix}
		Q_{\South\South} & Q_{\South\North} & Q_{\South\West}+z Q_{\South\East}  \\
		Q_{\North\South} & Q_{\North\North} & Q_{\North\West}+z Q_{\North\East}  \\
		Q_{\West\South}+z^{-1} Q_{\East\South} & 
		Q_{\West\North} + z^{-1} Q_{\East\North} & \phi^{\West\East}_{Q}(z)
	\end{pmatrix}
\end{align*}
where the Schur complement corresponds to integration w.r.t. the suitable Fourier mode of the r.v. on the vertical edges in the cylinder.

The passage from finite $p$ to the infinite strip can be done in various ways and is straightforward in the Fourier space: the space $l^2(\mathbb{U}_p;\ca{H}_1)$ is then replaced by $L^2(S^1; \ca{H}_1)$ by normalizing all the scalar products to obtain Riemann sums and the mode-wise multiplication operator $\ca{S}^Q_{\West\East}(e^{i\theta})$ defined above. This is easy at the level of quadratic forms but more subtle at the level of densities $\ee_{Q}$ since the spaces become infinite-dimensional when $p\to\infty$ and require either renormalization or to drop densities. After inverse Fourier transform, the space $L^2(S_1;\ca{H}_1)$ corresponds to the lattice space $l^2(\setZ;\ca{H}_1)$ which is precisely $\ca{W}_{\West\East}$ defined in \eqref{eq:halflinechoices} from gluing of opposite half-strips. 

The same classical approach by vertical transfer matrices in the vertical direction provides an asymptotic space $L^2(S^1;\ca{H}_2)$ on which acts pointwise the map $\ca{S}^Q_{\South\North}: S^1 \to \End(\ca{H}_2^2)$
\begin{align*}
	\ca{S}^Q_{\South\North}(w) &= Q_{[\West\East],[\West\East]}-(Q_{[\West\East],[\South]}+Q_{[\West\East],[\North]}w)\phi^{\South\North}_{Q}(w)^{-1}(Q_{[\South],[\West\East]}+Q_{[\North],[\West\East]}w^{-1})
	\\
	&= \mathrm{Schur}_{\ca{H}_1} 					\begin{pmatrix}
		\phi^{\South\North}_Q(w) & Q_{\South\West} + w^{-1}Q_{\North\West}& Q_{\South\East}+w^{-1}Q_{\North \East} \\
		Q_{\West\South} + w Q_{\West\North} & Q_{\West\West} & Q_{\West\East} \\
		Q_{\East\South} + w Q_{\East\North} & Q_{\East\West} & Q_{\East\East}
	\end{pmatrix}
\end{align*}
where the Schur complement corresponds to integration on the suitable Fourier mode on the internal horizontal edges of the vertical strip.

The two functions $\ca{S}^Q_{\West\East}$ and $\ca{S}^Q_{\South\North}$ are classical objects and are obtained from traditional transfert matrices approaches with Fourier transform. However, they hide "by construction" one of the dimension, which is integrated out, and hence break the dihedral symmetry. Their one-dimensional nature will be used below to extract useful but restricted information about the half-strip and corner fixed points which are purely two-dimensional. We will see afterwards how to complete the missing information.

\subsubsection{Basic operadic structure of the strip elements}

\begin{prop}[pointwise 1D structure of the strip elements]\label{prop:transfer1D:fourier}
	For any face operators $Q$ and $Q'$ acting on $\ca{H}_1^2\oplus\ca{H}_2^2$, for any $u\in S^1$, it holds
	\[
	\ca{S}^{\begin{tikzpicture}[guillpart,yscale=0.75,xscale=1]
			\fill[guillfill] (0,0) rectangle (1,2);
			\draw[guillsep] (0,0)--(1,0)--(1,2)--(0,2)--(0,0) (0,1)--(1,1);
			\node at (0.5,0.5) {$Q$};
			\node at (0.5,1.5) {$Q'$};
	\end{tikzpicture}}_{\West\East}(u) = \Schur_{1D}\left( \ca{S}_{\West\East}^Q(u) ,  \ca{S}_{\West\East}^{Q'}(u) \right)
	\]
	where the Schur complement on the left is taken w.r.t. $\ca{H}_2^2$ instead of $\ca{H}_2$ and the product $\Schur_{1D}$ on the right is the associative product inherited from the vertical lattice $\setZ$ as introduced in~\ref{prop:def:Schur1D} below (see \cite{Bodiot}).
	
	Moreover, for any $u\in S^1$, the self-adjoint positive definite quadratic form $\ca{S}_{\West\East}^Q(u)$ on $\ca{H}_1$ defines a $\ca{H}_1$-valued one-dimensional process as in \eqref{eq:def:1Dproc} and, under assumption~\ref{assump:2Deasy}, admits a Gibbs measure on $\setZ$ with left and right invariant boundary quadratic forms on $\ca{H}_1$ given by $G^*_L={G}^*_{\infty_\South}(u)$ and $G^*_R= G^*_{\infty_\North}(u)$ given by lemma~\ref{lemm:onedim:invariantfromW} with the substitution $Q=\ca{S}_{\West\East}^Q(u)$.
	
	\emph{Mutatis mutandis}, the quadratic forms $\ca{S}_{\South\North}^Q(u)$ provides a $\ca{H}_2$-valued one-dimensional process, which admits, under assumption~\ref{assump:2Deasy}, a Gibbs measure on $\setZ$ with left and right invariant boundary quadratic forms on $\ca{H}_2$ given by $G^*_L={G}^*_{\infty_{\West}}(u)$ and $G^*_R= G^*_{\infty_{\East}}(u)$.
\end{prop}
\begin{proof}
	The first associative property is a direct consequence of the associativity of Schur complements \eqref{eq:Schurcomp:assoc}. It is then easy to see that assumption~\ref{assump:2Deasy} implies, for each $u \in S^1$, assumption~\ref{assump:1D} for the corresponding operator $\ca{S}_{\West\East}^Q(u)$. A direct application of \cite{Bodiot} then provides the result.
\end{proof}

In the horizontal dimension we now have three elements $\ca{S}_{\West\East}^Q$, ${G}^*_{\infty_\South}$ and ${G}^*_{\infty_\North}$ that are constructed from $Q$ and provides functions from $S^1$ to $\End(\ca{H}_1^2)$, $\End(\ca{H}_1)$ and $\End(\ca{H}_1)$ respectively with suitable fixed point property under the vertical 1D product $\Schur_{1D}$. These elements are continuous and bounded in $u$: hence they act on $L^2(S^1;\ca{H}_1^2)$, $L^2(S^1;\ca{H}_1)$ and $L^2(S^1;\ca{H}_1)$ respectively. Using the Fourier transform $\ca{F}$ from $S^1$ to $\setZ$, they also act on 
$f\in\ca{W}_{\West\East}^2$ and  $g\in\ca{W}_{\West\East}$ as introduced in \eqref{eq:halflinechoices} respectively through:
\begin{subequations}\label{eq:def:stripandhalfplanefixed}
	\begin{align}
		\ov{Q}^{\infty_{\West\East}} f  &\coloneqq  \ca{F} \ca{S}_{\West\East}^Q \ca{F}^{-1} f
		\\
		\ov{Q}^{\infty_{\West\East},\infty_{\South}} g  &\coloneqq \ca{F} {G}^*_{\infty_{\South}}\ca{F}^{-1} g
		\\
		\ov{Q}^{\infty_{\West\East},\infty_{\North}} g  &\coloneqq \ca{F} {G}^*_{\infty_{\North}} \ca{F}^{-1} g
	\end{align}
\end{subequations}
In particular, from the definition of the Schur complements, the elements $\ov{Q}^{\infty_{\West\East},\infty_{\South}}$ and $\ov{Q}^{\infty_{\West\East},\infty_{\North}}$ are invariant under gluing with $\ov{Q}^{\infty_{\West\East}}$.

We can now state the first relation between these classical objects and the new ones obtained in the operadic approach in section~\ref{sec:fixedpoints} through fixed point equations. 

\begin{theo}\label{theo:gluingtodoublyinf}
	Under assumptions \ref{assump:2Deasy}
	The three elements $\ov{Q}^{\infty_{\West\East}}$, $\ov{Q}^{\infty_{\West\East},\infty_{\South}}$ and $\ov{Q}^{\infty_{\West\East},\infty_{\North}}$ defined in \eqref{eq:def:stripandhalfplanefixed} belong respectively to the spaces $\ca{Q}_{\infty_{\West\East},1}$, $\ca{Q}_{\infty_{\West\East},\infty_\South}$ and $\ca{Q}_{\infty_{\West\East},\infty_\South}$ introduced in \eqref{eq:strip:Qspaces} with spaces given in \eqref{eq:halflinechoices}.
	
	Moreover, the following three identities are valid under assumptions \ref{assump:2Deasy} and \ref{assump:dihedral}:
	\begin{subequations}
		\label{eq:gluing:hstostrip:ctohp}
		\begin{align}
			\label{eq:gluing:hstostrip:first}
			\ov{Q}^{\infty_{\West\East}} &= 
			\begin{tikzpicture}[guillpart,yscale=1.5,xscale=3]
				\fill[guillfill] (0,0) rectangle (2,1);
				\draw[guillsep] (0,0)--(2,0) (0,1)--(2,1) (1,0)--(1,1);
				\node at (0.5,0.5) {$\ov{Q}^{[\infty_\West,1]}$};
				\node at (1.5,0.5) {$\ov{Q}^{[\infty_\East,1]}$};
			\end{tikzpicture}_\ca{Q}
			\\
			\ov{Q}^{\infty_{\West\East},\infty_\South} &= 
			\begin{tikzpicture}[guillpart,yscale=1.5,xscale=3]
				\fill[guillfill] (0,0) rectangle (2,1);
				\draw[guillsep]  (0,1)--(2,1) (1,0)--(1,1);
				\node at (0.5,0.5) {$ \ov{Q}^{[\infty_\West,\infty_\South]}$};
				\node at (1.5,0.5) {$ \ov{Q}^{[\infty_\East,\infty_\South]}$};
			\end{tikzpicture}_\ca{Q}
			\\
			\ov{Q}^{\infty_{\West\East},\infty_\North} &= 
			\begin{tikzpicture}[guillpart,yscale=1.5,xscale=3]
				\fill[guillfill] (0,0) rectangle (2,1);
				\draw[guillsep] (0,0)--(2,0)  (1,0)--(1,1);
				\node at (0.5,0.5) {$ \ov{Q}^{[\infty_\West,\infty_\North]}$};
				\node at (1.5,0.5) {$ \ov{Q}^{[\infty_\East,\infty_\North]}$};
			\end{tikzpicture}_{\ca{Q}}	\end{align}
	\end{subequations} (base point placed on the cut) where the elements $\ov{Q}^{[\infty_a,1]}$ and $\ov{Q}^{[\infty_a,\infty_b]}$, $a\in\{\West,\East\}$ and $b\in\{\South,\North\}$ are the fixed points described in theorems~\ref{theo:westfixedpoint} and \ref{theo:corner:charactblocks}.
	Similar identities hold in the second direction with vertical gluings.
\end{theo}
\begin{proof}
	It is see from their definition or their characterization in lemma~\ref{lemm:onedim:invariantfromW} that the three functions $\ca{S}_{\West\East}^Q$, ${G}^*_{\infty_\South}$ and ${G}^*_{\infty_\North}$ are continuous function in $u\in S^1$ and are thus bounded. Moreover, using assumption~\ref{assump:2Deasy}, for any $u\in S^1$, the three operators $\ca{S}_{\West\East}^Q(u)$, ${G}^*_{\infty_\South}(u)$ and ${G}^*_{\infty_\North}(u)$ are invertible and hence, by continuity, are bounded away from $0$. Thus, they belong to the spaces $\ca{Q}_{\infty_{\West\East},1}$, $\ca{Q}_{\infty_{\West\East},\infty_\South}$ and $\ca{Q}_{\infty_{\West\East},\infty_\South}$.
	
	The gluing property requires a little bit more work that we delay to section~\ref{sec:proof:foldingtheorems} where all the operators will acquire interesting analytic interpretations. 
\end{proof}
We now provide three important remarks to illustrate how the various approaches meet in this theorem.
\begin{rema}\label{rema:fouriervsoperad}
	This theorem is expected since the infinite-volume Gibbs measure is unique in the present case. However, it is interesting to note that both sides of \eqref{eq:gluing:hstostrip:ctohp} have very different natures. The l.h.s. is obtained from the infinite-volume Gibbs measure constructed by analytical mean from the Gaussian Fourier space without using specifically the Markov property of the model but its other properties. On the other hand, the r.h.s. is obtained as solution of explicitly fixed point equations related to the Markov property encoded in the guillotine operadic structure; we thus expect it to be more easily subject to generalization for other models.
\end{rema}
\begin{rema}
	It is a standard mystery in the transfer matrix formalism that all computations can be performed along a choosen dimension but changing dimensions in the middle of a computation is in general impossible. Only the knowledge of the full infinite-volume Gibbs measure allows to identify results obtained by a choice or the other of the "time" dimension. Cutting strips into half-strips and half-planes into corners solves this question by introducing more fundamental objects: for example, the four corners allows to obtain either the horizontal or the vertical half-planes by gluing in the transverse direction. From this perspective, the fixed points of the previous operadic section carry more information.
\end{rema}
\begin{rema}
	The half-strip and corner fixed points are more fundamental objects than the strip and half-plane elements since the l.h.s. of \eqref{eq:gluing:hstostrip:ctohp} can be obtained by gluing from the objects in the r.h.s. However, the converse is \emph{not} true: we will see below that the knowledge of the l.h.s. provides only part of the elements in the r.h.s., namely the diagonal blocks, which have a one-dimensional nature. We will see below that the previous equations \eqref{eq:gluing:hstostrip:ctohp} are unable to provide the interactions between segments and half-lines along different dimensions.
\end{rema}
In order to prove theorems~\ref{theo:kolmogorovwithboundaryweights} (existence of explicit solutions to the fixed point equations) and \ref{theo:gluingtodoublyinf} (gluing of half-strips to strips), we now dive into the internal structure of the various elements $\ov{Q}^{[a,b]}$ and they can be related to each other through suitable analytical tools. In order to lighten as much as possible notations, we now always work under the assumption \ref{assump:dihedral}.

\subsubsection{Foldings}\label{sec:fold}

We have seen how \eqref{eq:gluing:hstostrip:ctohp} provides the strip and half-plane elements from the half-strip and corner ones; we now investigate how these formulae can be \emph{partially} reversed with the notation of foldings of strip elements to blocks of half-strip elements. 

Starting with the spaces introduced in \eqref{eq:canonicalhalflinespaces}, we have orthogonal decomposition of line spaces on the half-line spaces $\ca{W}_{LR}(\ca{H})=\ca{W}_{L}(\ca{H})\oplus\ca{W}_{R}(\ca{H})$ and we introduce the associated orthogonal projectors $P^{b} : \ca{W}_{LR}(\ca{H}) \to \ca{W}_b(\ca{H})$ for $b\in\{L,R\}$ as well as a canonical self-adjoint involution $J$ on $\ca{W}_{LR}(\ca{H})$ that maps a sequence $(f_k)_{k\in\setZ})$ to the sequence $(f_{-1-k})_{k\in\setZ})$ and hence maps $\ca{W}_{L}(\ca{H})$ to $\ca{W}_{R}(\ca{H})$ and vice versa.

Given any bounded operator $A$ on $\ca{W}_{LR}(\ca{H})$, we define the following four Toeplitz-like and Hankel-like operators, 
\begin{align*}
	\toep^{b}(A) &= P^\West A P^\West
	&
	\hank^{b}(A) &= P^\West A J P^\West
\end{align*}
If the operator $A$ on $l^2(\setZ;\ca{H})$ is defined as $\ca{F}\ha{a}\ca{F}$ where $\ha{a}$ is an operator which acts by pointwise multiplication on $L^2(S^1;\ca{H})$ with $(\ha{a}\ha{f})(u) = \ha{a}(u) \ha{f}(u)$ for $u\in S^1$ then we have the following sequences of elements in $\ca{B}(\ca{H})$:
\begin{align*}
	\scal{\indic{k}}{\toep^\West(\ha{a}) \indic{l}}&= \Fourier_{k-l}(\ha{a}) \indic{k< 0}\indic{l< 0}
	&
	\scal{\indic{k}}{\toep^\East(\ha{a}) \indic{l}} &=  \Fourier_{k-l}(\ha{a}) \indic{k\geq 0}\indic{l\geq 0}
	\\
	\scal{\indic{k}}{\hank^\West(\ha{a}) \indic{l}} &= \Fourier_{k+l+1}(\ha{a}) \indic{k< 0}\indic{l< 0}
	&
	\scal{\indic{k}}{\hank^\East(\ha{a}) \indic{l}} &= \Fourier_{k+l+1}(\ha{a}) \indic{\geq 0}\indic{l\geq0}
\end{align*}
using the same correspondence as in \eqref{eq:def:blockindicnotations}.

\begin{defi}[folding]\label{def:fold}
	For any bounded operator $A$ on $\ca{W}_{LR}(\ca{H})$, we define the two folded operators $\fold^{b}(A)$, $b\in\{L,R\}$, by
	\[
	\fold^{b}(A) = \toep^{b}(A) - \hank^{b}(A)
	\]
	acting respectively on $\ca{W}_L(\ca{H})$ and $\ca{W}_R(\ca{H})$.
\end{defi}
If $\ha{a}$ is a continuous function $S^1\to\End(\ca{H})$, we write $\fold^{b}(\ha{a})$ the folded operator associated to the Fourier transform of the multiplication operator associated to $\ha{a}$. The folded operators satisfy the following properties.
\begin{prop}
	Let $\ha{a}_1,\ha{a}_2 : S^1 \to \End(\ca{H})$ be two continuous and \emph{even} functions (i.e. $\ha{a}_i(e^{i\theta)})=\ha{a}_i(e^{-i\theta})$ for all $\theta\in[0,2\pi)$). 
	\begin{enumerate}[(i)]
		\item if, for all $u\in S^1$, $\ha{a}_i(u)$ is self-adjoint and positive definite, then $\fold^{b}(\ha{a}_i)$ is also self-adjoint and positive definite.
		\item folding is a multiplicative map:
		\begin{equation}\label{eq:fold:morphism}
			\fold^{b}(\ha{a}_1\ha{a}_2) = \fold^{b}(\ha{a}_1)\fold^b(\ha{a}_2)
		\end{equation}
	\end{enumerate}
\end{prop}
\begin{proof}
	For even operators $\ha{a}_i$, we have $J\ha{a}_i =\ha{a}_i J$, hence the self-adjoint property. The positive definite property is a consequence of the projector structure. The morphism property is obtained by first proving directly from the definitions of Toeplitz and Hankel operators that
	\begin{align*}
		\toep^b(\ha{a}_1\ha{a}_2)&=\toep^b(\ha{a}_1)\toep^\West(\ha{a}_2)+\hank^b(\ha{a}_1)\hank^\West(J\ha{a}_2J)
		\\
		\hank^b(\ha{a}_1\ha{a}_2)&=\toep^b(\ha{a}_1)\hank^\West(\ha{a}_2)+\hank^b(\ha{a}_1)\toep^\West(J\ha{a}_2J)
	\end{align*}
\end{proof}
We can also introduce additive foldings by $\fold^{b}_+(A) = \toep^b(A)+\hank^b(A)$ and show that they satisfy the same properties.

In order to have notations easier to interpret, we will write $\fold^{\South}$ (resp. $\fold^{\North}$, $\fold^{\West}$ and  $\fold^{\East}$) the folding operators for $\fold^{L}$ (resp. $\fold^{R}$, $\fold^{L}$ and $\fold^{R}$) on $\ca{W}_{\South}$ (resp. $\ca{W}_{\North}$, $\ca{W}_{\West}$ and $\ca{W}_{\East}$) and use the same convention for Toeplitz and Hankel parts.

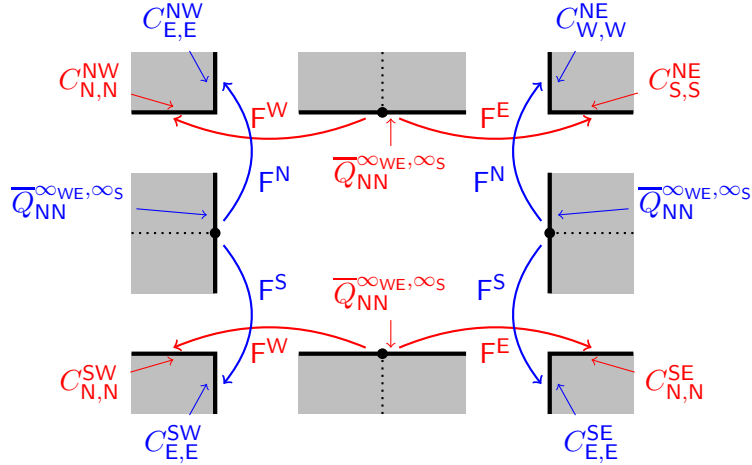
\begin{figure}
	\begin{center}
		\begin{tikzpicture}[yscale=0.8,xscale=1.1]
			\fill[guillfill] (0,0) rectangle (1,1);
			\draw[guillsep] (1,0)--(1,1)--(0,1);
			\node (CCSW_NN) at (-0.5,0.5) [red] {$C^{\South\West}_{\North,\North}$};
			\node (CCSW_EE) at (0.5,-0.5) [blue] {$C^{\South\West}_{\East,\East}$};
			\draw[->,red] (CCSW_NN) -- (0.5,0.9);
			\draw[->,blue] (CCSW_EE) -- (0.9,0.5);
			\fill[guillfill] (5,0) rectangle (6,1);
			\draw[guillsep] (5,0)--(5,1)--(6,1);
			\node (CCSE_NN) at (6.5,0.5) [red] {$C^{\South\East}_{\North,\North}$};
			\node (CCSE_WW) at (5.5,-0.5) [blue] {$C^{\South\East}_{\East,\East}$};
			\draw[->,red] (CCSE_NN) -- (5.5,0.9);
			\draw[->,blue] (CCSE_WW) -- (5.1,0.5);
			\fill[guillfill] (0,5) rectangle (1,6);
			\draw[guillsep] (0,5)--(1,5)--(1,6);
			\node (CCNW_SS) at (-0.5,5.5) [red] {$C^{\North\West}_{\North,\North}$};
			\node (CCNW_EE) at (0.5,6.5) [blue] {$C^{\North\West}_{\East,\East}$};
			\draw[->,red] (CCNW_SS) -- (0.5,5.1);
			\draw[->,blue] (CCNW_EE) -- (0.9,5.5);
			\fill[guillfill] (5,5) rectangle (6,6);
			\draw[guillsep] (5,6)--(5,5)--(6,5);
			\node (CCNE_SS) at (6.5,5.5) [red] {$C^{\North\East}_{\South,\South}$};
			\node (CCNE_WW) at (5.5,6.5) [blue] {$C^{\North\East}_{\West,\West}$};
			\draw[->,red] (CCNE_SS) -- (5.5,5.1);
			\draw[->,blue] (CCNE_WW) -- (5.1,5.5);
			\fill[guillfill] (2,0) rectangle (4,1);
			\draw[guillsep] (2,1)--(4,1);
			\coordinate (P1) at (3,0) ;
			\node (P2) at (3,1) [circle, fill, inner sep=0.5mm] {};
			\draw[thick, dotted] (P1)--(P2);
			\node (GS_NN) at (3.1,2) [red] {$\ov{Q}^{\infty_{\West\East},\infty_{\South}}_{\North\North}$};
			\draw[->,red] (GS_NN) -- (3.1,1.1);
			\fill[guillfill] (5,2) rectangle (6,4);
			\draw[guillsep] (5,2)--(5,4);
			\coordinate (P1) at (6,3) ;
			\node (P2) at (5,3) [circle, fill, inner sep=0.5mm] {};
			\draw[thick, dotted] (P1)--(P2);
			\node (GE_WW) at (6.75,3.5) [blue] {$\ov{Q}^{\infty_{\West\East},\infty_{\South}}_{\North\North}$};
			\draw[->,blue] (GE_WW) -- (5.1,3.3);
			\fill[guillfill] (0,2) rectangle (1,4);
			\draw[guillsep] (1,2)--(1,4);
			\coordinate (P1) at (0,3) ;
			\node (P2) at (1,3) [circle, fill, inner sep=0.5mm] {};
			\draw[thick, dotted] (P1)--(P2);
			\node (GW_EE) at (-0.75,3.5) [blue] {$\ov{Q}^{\infty_{\West\East},\infty_{\South}}_{\North\North}$};
			\draw[->,blue] (GW_EE) -- (0.9,3.3);
			\fill[guillfill] (2,5) rectangle (4,6);
			\draw[guillsep] (2,5)--(4,5);
			\coordinate (P1) at (3,6) ;
			\node (P2) at (3,5) [circle, fill, inner sep=0.5mm] {};
			\draw[thick, dotted] (P1)--(P2);
			\node (GN_SS) at (3.1,4) [red] {$\ov{Q}^{\infty_{\West\East},\infty_{\South}}_{\North\North}$};
			\draw[->,red] (GN_SS) -- (3.1,4.9);
			\draw[->,thick,red] (2.8,1.1) to [bend right=30] node [midway,below] {$\fold^\West$} (0.5,1.1);
			\draw[->,thick,red] (3.2,1.1) to [bend left=30] node [midway,below] {$\fold^\East$} (5.5,1.1);
			\draw[->,thick,red] (2.8,4.9) to [bend left=30] node [midway,above] {$\fold^\West$} (0.5,4.9);
			\draw[->,thick,red] (3.2,4.9) to [bend right=30] node [midway,above] {$\fold^\East$} (5.5,4.9);
			\draw[->,thick,blue] (1.1,2.8) to [bend left=30] node [pos=0.3,right] {$\fold^\South$} (1.1,0.5);
			\draw[->,thick,blue] (1.1,3.2) to [bend right=30] node [pos=0.3,right] {$\fold^\North$} (1.1,5.5);
			\draw[->,thick,blue] (4.9,2.8) to [bend right=30] node [pos=0.3,left] {$\fold^\South$} (4.9,0.5);
			\draw[->,thick,blue] (4.9,3.2) to [bend left=30] node [pos=0.3,left] {$\fold^\North$} (4.9,5.5);
		\end{tikzpicture}
	\end{center}
	\caption{\label{fig:fold:fromhalfplanetocorner}Diagonal blocks of corner fixed points from blocks of half-plane fixed points through foldings}
\end{figure}

We can now formulate the two main theorems that relate the operadic fixed point to the more classical approach by Fourier transform through folding. Their content, with their equivalent formulation on the other half-strips and corners, is illustrated in figure~\ref{fig:fold:fromhalfplanetocorner}.

\begin{theo}\label{theo:hs:fixedpoint:formulae}
	Under assumptions \ref{assump:dihedral} and \ref{assump:2Deasy}, for any $b\in\{\West,\East\}$, the diagonal blocks $B^{b}_{[\South\North],[\South\North]}$ of the half-strip fixed points $\ov{Q}^{[\infty_b,1]}$ of theorem~\ref{theo:westfixedpoint} are equal to
	\begin{align}	\label{eq:hs:blockfold}
		B^{\West}_{[\South\North],[\South\North]} &=\fold^{\West}( \ov{Q}^{\infty_{\West\East}} )
		&
		B^{\East}_{[\South\North],[\South\North]} &=\fold^{\East}( \ov{Q}^{\infty_{\West\East}} )
	\end{align}
\end{theo}

\begin{theo}[corner fixed point from foldings]\label{theo:corner:fixedpoint:formulae}
	Under assumptions~\ref{assump:dihedral} and \ref{assump:2Deasy}, the corner fixed point $\ov{Q}^{\infty_{\West},\infty_{\South}}$ of theorem~\ref{theo:corner:charactblocks} has blocks equal to
	\begin{subequations}
		\label{eq:corner:fixed:blocks}
		\begin{align}
			\label{eq:corner:fixed:blocks:fold}
			C^{\South\West}_{\North,\North}
			&=\fold^{\South}( \ov{Q}^{\infty_{\West},\infty_{\South\North}} )=
			\fold^\West(   G^*_{\infty_\South} )
			&
			C^{\South\West}_{\East,\East}
			&=\fold^{\West}( \ov{Q}^{\infty_{\West\East},\infty_{\South}} )=
			\fold^\South( G^*_{\infty_\West})
			\\
			\label{eq:corner:fixed:blocks:transverse}
			C^{\South\West}_{\East,\North}
			&= V^{\South\West}_{\searrow}
			&
			C^{\South\West}_{\North,\East}
			&= V^{\South\West}_{\nwarrow}
		\end{align}
	\end{subequations}
	where the operators $V^{\South\West}_{\searrow} : \ca{W}_{\West}\to\ca{W}_{\South}$ and $V^{\South\West}_{\nwarrow} : \ca{W}_{\South}\to\ca{W}_{\West}$ are defined by the elements
	\begin{subequations}
		\label{eq:def:cornerV}
		\begin{align}
			(V^{\South\West}_\searrow h)(-k-1) &= (U_Q^\West)^* (\id_{\ca{W}_{\West}}+ \fold^\West(\ha{W}_\South))\fold^\West(\ha{W}_\South)^k h \in \ca{H}_2
			\\
			(V^{\South\West}_\nwarrow g)(-k-1) &= (U_Q^\South)^* (\id_{\ca{W}_{\South}}+ \fold^\South(\ha{W}_\West))\fold^\South(\ha{W}_\West)^k g \in\ca{H}_1
		\end{align}
	\end{subequations}
	for all $k\geq 0$ and are adjoint to each others. All the other operators are described below in section~\ref{sec:cornerproof}.
\end{theo}
\begin{rema}
	These two theorems are partial reciprocal to theorem~\ref{theo:gluingtodoublyinf} but an important point is that they cannot be pushed further: the other blocks of the half-strip and corner fixed points are not accessible to a direct transfer matrix approach and can only be obtained from the fixed point equations of theorems~\ref{theo:westfixedpoint} and \ref{theo:corner:charactblocks}.
\end{rema}

\subsection{Proofs of the folding theorems}\label{sec:proof:foldingtheorems}
\subsubsection{Generalities}
The scheme of proof is simple: we need to show that the folded operators appearing in the theorems \ref{theo:hs:fixedpoint:formulae} and \ref{theo:corner:fixedpoint:formulae} satisfy the fixed point equations of theorems~\ref{theo:westfixedpoint} and \ref{theo:corner:charactblocks}.

The main difficulty of the proofs of these theorems is that the diagonal blocks in \eqref{eq:hs:blockfold} and \eqref{eq:corner:fixed:blocks} have to satisfy recursion that are not autonomous and involves the other blocks of $\ov{Q}^{[\infty_\West,1]}$ and $\ov{Q}^{[\infty_\West,\infty_\South]}$, which, as already said, can not be obtain from the transfer matrix. This difficulty is a consequence of the fact that, as illustrated in figure~\ref{fig:admissiblepatterns}, the approaches of sections~\ref{sec:fixedpoints} and \ref{sec:fourier} are reversed and meet only through these theorems. 

Therefore, the first step is to rewrite all the operators involved in terms of suitable underlying objects. In the present case, the idea is to use as most as possible the operators \eqref{eq:dimone:def:W} and their representations as Fourier coefficients \eqref{eq:1D:link:WwithFourier} inherited from \cite{Bodiot}.

The half-strip case is easy since the "small" block and the non-diagonal blocks have a very simple autonomous finite-dimensional structure. Proving theorem~\ref{theo:hs:fixedpoint:formulae} is then a simple rewriting of all the objects in terms of Fourier coefficients.

The corner case is much less easy since it does not involve the matrix $Q$ but rather the half-strip fixed points on the West and on the South. We must then generalize all the identities of \cite{Bodiot} used for theorem~\ref{theo:hs:fixedpoint:formulae} at a higher level for all $u\in S^1$. Moreover, all the blocks have to satisfy two independent equations (one fixed point and one recursion) as already discussed, which increases the number of remarkable identities to use.

\subsubsection{The half-strip case: proof of theorem~\ref{theo:hs:fixedpoint:formulae}.}
\begin{prop}\label{prop:hsfixed:rewritingsmallblocks}
	Under assumptions~\ref{assump:2Deasy} and \ref{assump:dihedral}, the East diagonal block of the fixed point is given by $\ov{Q}^{[\infty_\West,1]}$ 
	\[
	B^\West_{\East,\East} = T + A W^L_{Q,\West\East}
	\]
	where the operators $W^{L|R}_{Q,\West\East}$ are the shortcut notations of the operators $W^{L|R}_{Q_{[\West\East],[\West\East]}}$ defined in \eqref{eq:dimone:def:W} for the one-dimensional dynamics on $\ca{H}_2$ induced by $Q_{[\West\East],[\West\East]}$. The non-diagonal block is given by
	\[
	B^{\West}_{[\South\North],\East} = (B^{\West}_{\East,[\South\North]} )^*=\begin{pmatrix}
		U^\West_Q \\
		U^\West_Q 
	\end{pmatrix}
	\]
	where the operator $U_Q^\West : \ca{H}_2\to\ca{W}_\West$ is defined, for all $k\geq 0$, by
	\begin{align}\label{eq:hs:transversecoupling}
		(U^{\West}_Q f)(-k-1) 
		= U (\id_{\ca{H}_2} + W_{Q,\West\East}^L) (W_{Q,\West\East}^L)^k f
	\end{align}
\end{prop}
\begin{proof}
	This is a direct consequence of \cite{Bodiot}: equation~\eqref{eq:Westfixedpoint:1DEast} is the fixed point of a one-dimensional Gaussian Markov dynamics on $\setZ$ with coupling matrix $Q_{[\West\East],[\West\East]}$. Assumption~\ref{assump:2Deasy} implies assumption~\ref{assump:1D} and we obtain the result from lemma~\ref{lemm:onedim:invariantfromW}.
	
	The recursion of lemma~\ref{lemm:hs:recursion1} becomes under assumption~\ref{assump:dihedral}:
	$\gamma^{(a)}_{-1} = U(\id-K^{-1} A)$
	and $\gamma^{(a)}_{k-1} = \gamma_k^{(a)} (-K^{-1} A)$ for $k<-1$.
	A consequence of the representation the $W$ operators in terms of Fourier coefficients and the recursion between these coefficients provide $2T + AW^{L}_{Q,\West\East} + A(W^{L}_{Q,\West\East})^{-1}=0$, hence $K^{-1}A=-W^{L}_{Q,\West\East}$ and the result. The non-diagonal were announced to be geometric sequences and the operators $W^{L}_{Q,\West\East}$ provides the ratio.
\end{proof}

We first prove \eqref{eq:gluing:hstostrip:first} before theorem~\ref{theo:hs:fixedpoint:formulae} as a warm-up.

\begin{proof}[Proof of \eqref{eq:gluing:hstostrip:first}]
	In the block decomposition $\ca{W}_{\West\East}=\ca{W}_\West\oplus\ca{W}_\East$, both sides of \eqref{eq:gluing:hstostrip:first} are given by 
	\begin{align*}
		\mathrm{lhs}\eqref{eq:gluing:hstostrip:first}	&= \begin{pmatrix}
			P^\West\ov{Q}^{\infty_{\West\East}}P^\West
			&
			P^\West\ov{Q}^{\infty_{\West\East}} P^\East
			\\
			P^\East\ov{Q}^{\infty_{\West\East}}	P^\West
			&
			P^\East\ov{Q}^{\infty_{\West\East}}	P^\East
		\end{pmatrix}
		\\
		\mathrm{rhs}\eqref{eq:gluing:hstostrip:first}
		&= 	\begin{pmatrix}
			\fold^{\West}( \ov{Q}^{\infty_{\West\East}} ) -   U_Q^{\West}\ti{K}^{-1}(U_Q^{\West})^*
			&
			- U_Q^{\West} \ti{K}^{-1} (U_{Q}^{\East})^*
			\\
			-U_{Q}^{\East}\ti{K}^{-1}(U_Q^{\West})^*
			&
			\fold^{\East}( \ov{Q}^{\infty_{\West\East}} )  -U_{Q}^{\East}\ti{K}^{-1}(U_{Q}^{\East})^*
		\end{pmatrix}
	\end{align*}
	where the r.h.s. is obtained as the Schur complement w.r.t.~the space $\ca{H}_2$ on the vertical edge with $\ti{K}= 2T+AW^L_{Q,\West\East}+AW^R_{Q,\West\East}$ under the two assumptions. The East elements are obtained in a similar way as in proposition~\ref{prop:hsfixed:rewritingsmallblocks} with reversed indexed $k\to 1-k$ and right operators instead of left ones.
	
	Using $JP^\West J = P^{\East}$ and $JU_Q^\West = U^{\East}_Q$ under dihedral symmetry, the four block equalities above are all consequences of the single identity, that we now prove:
	\begin{align}\label{eq:gluingidtoshow}
		P^\West \ov{Q}^{\infty_{\West\East}}  P^\East &=- U^\West_Q \ti{K}^{-1} (U_Q^\East)^*
	\end{align}
	Using \eqref{eq:1D_somme_G_L_G_R}, we obtain $\ti{K}^{-1}=\ca{F}_0(\phi_Q^{\West\East})$ (Fourier coefficient). On the other hand, for any $k,l\geq 0$ and any $a,b\in\{\South,\North\}$, the definition of $\ov{Q}^{\infty_{\West\East}}$ rewritten under dihedral symmetry provide:
	\[
	\scal{ \indic{-l-1} }{ \ov{Q}^{\infty_{\West\East}}_{ab} \indic{k} } 
	= \Fourier_{-k-l-1}\left(   -U(1+\bullet)\phi_Q^{\West\East}(\bullet)^{-1} (1+\bullet^{-1})U \right) 
	\]
	Indeed, the block $Q_{[\South\North],[\South\North]}$ participates only the $0$-th Fourier coefficient. On the other hand,
	$\scal{ \indic{-l-1} }{	\mathrm{rhs}\eqref{eq:gluing:hstostrip:first}_{ab} \indic{k}}$ can be obtained by replacing the powers $(W^{L|R}_{Q,\West\East})^j$ by the Fourier coefficients of $\phi_W^{\West\East}$ using \eqref{eq:1D:link:WwithFourier}. The products of Fourier coefficients are simplified with \eqref{eq:1D:FourierPhiInv:remarkable} and give the expected identity \eqref{eq:gluingidtoshow}.
\end{proof}

Using the same type of computation, we can now complete the proof of theorem~\ref{theo:hs:fixedpoint:formulae}.

\begin{proof}[Proof of theorem~\ref{theo:hs:fixedpoint:formulae}]
	We first observe that, by definition, of the folded operator,
	\[
	\scal{\indic{k}}{\fold^{\West}(\ov{Q}^{\infty_{\West\East}})\indic{l}}
	=
	\Fourier_{k-l}(  \ca{S}^Q_{\West\East} ) - \Fourier_{k+l+1}(\ca{S}^Q_{\West\East})
	\]
	Using $f_n= \Fourier_n((\phi_Q^{\West\East})^{-1})$, we have,
	\[
	\Fourier_{n}(  \ca{S}^Q_{\West\East} ) = \delta_{n,0} Q_{[\South\North],[\South\North]} - U(2f_n+f_{n-1}+f_{n+1})U
	\]
	On the other hand, the coefficients of the recursion of lemma~\ref{lemm:hs:recursion2} can be rewritten in terms of the powers of the operators $W^{L}_{Q,\West\East}$ and replaced as before by the Fourier coefficients $f_n$. The recursion of lemma~\ref{lemm:hs:recursion2} is then equivalent to \eqref{eq:phiK:recursion:FourierCoeffs} and the theorem is proved.\end{proof}

A second interesting path of proof developed in \cite{BodiotThesis} relies on the proof of more algebraic intertwining properties relating the various operators. We present here only the formulation of these identities and refer to \cite{BodiotThesis} for the complete proof that relies again on the properties of the Fourier coefficients of $\phi_Q^{-1}$.
\begin{lemm}\label{lemm:FourierCoeffstoOperators}
	The following intertwining relations are valid for $a\in\{\West,\East\}$, using $f_0 = \Fourier_{0}((\phi_Q^{\West\East})^{-1})$:
	\begin{subequations}
		\begin{align}
			U_Q^W W_{Q,\West\East}^L &= \toep^\West(e^{i\bullet}) U_Q^W	\label{eq:WtoToep:throughU}
			\\
			- U_Q^a f_0 (U_Q^a)^* &= \hank^a( \ha{Q}^{\infty_{\West\East}} )
			\\
			- U_Q^\West f_0 (U_Q^\East)^* &= P^\West \ha{Q}^{\infty_{\West\East}} P^\East
			\\
			W_{Q,\West\East}^L  f_0 &=  f_0 (W_{Q,\West\East}^{L})^*
			\\
			W_{Q,\West\East}^L A^{-1} &= \left( (W_{Q,\West\East}^L)^2-\id\right) f_0
			\\
			\label{eq:shiftmapasToeplitz}
			D_1^L(f,u) &= \toep^\West(e^{-i\theta})f + u \indic{-1}
		\end{align}
	\end{subequations}
\end{lemm}

We now present a analytical consequence of theorem~\ref{theo:hs:fixedpoint:formulae} that was missing to verify that the solution $\ov{Q}^{[\infty_\West,1]}$ is admissible.

\begin{coro}\label{coro:hsfixed:belongstospace}
	The solution $\ov{Q}^{[\infty_\West,1]}$ of the fixed point equation of theorem~\ref{theo:westfixedpoint} belongs to $\ca{Q}_{\infty_\West,1}$.
\end{coro}
\begin{proof}
	We need to show that the operator is bounded and satisfies the infimum bound in \eqref{eq:hs:Qspaces}. From proposition~\ref{prop:hsfixed:rewritingsmallblocks} and theorem~\ref{theo:hs:fixedpoint:formulae}, all the blocks of $\ov{Q}^{[\infty_\West,1]}$ are bounded operators and hence this operator is bounded.
	
	For the infimum bound, we first write:
	\[
	\ov{Q}^{[\infty_a,1]} =
	\begin{pmatrix}
		I & L^* \\
		0 & I 
	\end{pmatrix}
	\begin{pmatrix}
		\mathrm{Schur}_{\ca{H}_2}(\ov{Q}^{[\infty_a,1]}) & 0 \\
		0 & B^\West_{\East,\East} 
	\end{pmatrix}
	\begin{pmatrix}
		I & 0 \\
		L & I 
	\end{pmatrix}	
	\]
	where $L$ is bounded operator that depends on the non-diagonal blocks $U_Q^\West$ and $B_{\East,\East}$. A quick computation using dihedral symmetry shows that 	\[
	\mathrm{Schur}_{\ca{H}_2}(\ov{Q}^{[\infty_a,1]}) = \toep^{\West}(\ov{Q}^{\infty_{\West\East}}) + \hank^{\West}(\ov{Q}^{\infty_{\West\East}})
	=  \fold_+^{\West}(\ov{Q}^{\infty_{\West\East}})
	\]
	Hence, $\mathrm{Schur}_{\ca{H}_2}(\ov{Q}^{[\infty_a,1]})$ is invertible and its inverse is bounded from the morphism property of folding and the invertibility and boundedness of $\ov{Q}^{\infty_{\West\East}}$. The same is true for $B^\West_{\East,\East}$. From the triangular structure above, it is then easy to show that $\ov{Q}^{[\infty_a,1]}$ is also invertible with a bounded inverse and the infinimum bound is thus satisfied.
\end{proof}

\subsubsection{The corner case: proof of theorem~\ref{theo:corner:fixedpoint:formulae}.}\label{sec:cornerproof}

\paragraph*{Preliminary results}

Theorem~\ref{theo:corner:charactblocks} provides six equations \emph{redundancy} that can be solved explicitly to obtain the corner elements. The redundancy adds constraints on the half-strip elements. Everytime a shift $D_1^L$ appears, we obtain recursions on the blocks of the corner quadratic forms, which can be solved explicitly. However, this is a bit more complicated than for half-strips since the initialization through elementary objects \eqref{eq:corner:fixed:EE:1D} and \eqref{eq:corner:fixed:NN:1D} is related to a infinite-dimensional operator $B^{\West}_{\South,\South}$ and $B^{\South}_{\West,\West}$ computed through theorem~\ref{theo:westfixedpoint}. 

Although this is perfectly feasible, we present here a second approach which provides a representation and an interpretation in terms of classical objects such as transfer matrices with suitable tools such as folding, which ensures nice proofs of existence (although extrinsic).

In order to formulate the folding representation, we introduce additional operators $W^{L|R}_\bullet$ for half-planes.
\begin{prop}\label{prop:halfplanetoW}
	The operators $G^*_{\infty_a}$ introduced in proposition~\ref{prop:transfer1D:fourier} admits, under assumptions~\ref{assump:2Deasy} and \ref{assump:dihedral} the following representations, for any $u\in S^1$
	\begin{align}
		G^*_{\infty_\West}(u) = \ca{S}_{\South\North}^Q(u)_{\East,\East} + \ca{S}_{\South\North}^Q(u)_{\East,\West} \ha{W}_\West(u)
		\\
		G^*_{\infty_\East}(u) = \ca{S}_{\South\North}^Q(u)_{\West,\West} +\ca{S}_{\South\North}^Q(u)_{\West,\East}\ha{W}_\East(u)
		\\
		G^*_{\infty_\South}(u) = \ca{S}_{\West\East}^Q(u)_{\North,\North} + \ca{S}_{\West\East}^Q(u)_{\North,\South}\ha{W}_\South(u)
		\\
		G^*_{\infty_\North}(u) = \ca{S}_{\West\East}^Q(u)_{\South,\South} + \ca{S}_{\West\East}^Q(u)_{\South,\North}\ha{W}_\North(u)
	\end{align}
	where the operators $\ha{W}_\South$ and $\ha{W}_\North$ (resp.~$\ha{W}_\West$ and $\ha{W}_\East$) act functions $S^1\to\End(\ca{H}_1)$ (resp. $\End(\ca{H}_2)$) such that $\ha{W}_\South(u)$ and $\ha{W}_\North(u)$ (resp.~$\ha{W}_\West(u)$ and $\ha{W}_\East(u)$) are the operators $W^L_K$ and $W^R_K$ defined in \eqref{eq:dimone:def:W} associated to $K=\ca{S}_{\West\East}^Q(u)$ (resp. $K=\ca{S}_{\South\North}^Q(u)$).
\end{prop}
\begin{proof}
	Under assumption~\ref{assump:2Deasy}, $\ca{S}_{\South\North}^Q(u)$ and $\ca{S}_{\West\East}^Q(u)$ are quadratic forms on $\ca{H}_2$ and $\ca{H}_1$ that satisfy \ref{assump:1D} and thus the results of \cite{Bodiot} can be applied. In particular, the operator $\ha{W}_a(u)$ inherit the representation in terms of Fourier coefficients of $\ca{S}_{\South\North}^Q(u)$ and $\ca{S}_{\West\East}^Q(u)$, which can themselves be written in terms of Fourier coefficients of $\Psi_Q^{-1}$ (see \cite{BodiotThesis} for precise formulae).	
\end{proof}

We first present a direct corollary of the fold representation of the fixed point of theorem~\ref{theo:corner:fixedpoint:formulae}.
\begin{coro}\label{coro:cornerfixed:belongstospace}
	The element $\ov{Q}^{[\infty_\West,\infty_\South]}$ as defined in theorem~\ref{theo:corner:fixedpoint:formulae} belongs to $\ca{Q}_{\infty_\West,\infty_\South}$.
\end{coro}
\begin{proof}
	The proof follows exactly the same steps as for corollary~\ref{coro:hsfixed:belongstospace} using alternative fold operators $\fold_+^a$ and the previous lemma. Similar bounds (with additional constants induced by the folding) are obtained in the same way and the computations are not reproduced here (see \cite{BodiotThesis} for details).
\end{proof}

Moreover, using the same approach with projectors as for the proof of \eqref{eq:gluing:hstostrip:first}, one first verify that the corner operators can be glued together to obtain the half-plane operators as stated in equations~\ref{eq:gluing:hstostrip:ctohp} and we not provide the proof again. The content of theorem~\ref{theo:corner:fixedpoint:formulae} shows that \eqref{eq:gluing:hstostrip:ctohp} can be partially reversed by folding to provide the diagonal blocks as for strips and half-strips.

\begin{proof}[Proof of theorem~\ref{theo:corner:fixedpoint:formulae}]
	The proof of theorem~\ref{theo:corner:fixedpoint:formulae} is split in the following steps:
	\begin{enumerate}
		\item the fixed point property~\eqref{eq:corner:fixed:EE:1D} and \eqref{eq:corner:fixed:NN:1D} of the blocks \eqref{eq:corner:fixed:blocks:fold}  proved in lemma~\ref{lemm:fixed:diagOK}
		\item the recursion \eqref{eq:corner:fixed:nondiagonal:NE} and \eqref{eq:corner:fixed:nondiagonal:EN} for $V^{\South\West}_{\nwarrow}$ and $V^{\South\West}_{\nwarrow}$ defined in \eqref{eq:def:cornerV} in lemma~\ref{lemm:fixed:VOK}
		\item the fact that $V^{\South\West}_{\nwarrow}$ and $V^{\South\West}_{\nwarrow}$ are adjoint to each other proved in lemma~\ref{lemm:fixed:VadjointOK}
		\item the transverse shift property \eqref{eq:corner:fixed:NN:shiftinv} and \eqref{eq:corner:fixed:EE:shiftinv} of the diagonal blocks proved in lemma~\ref{lemm:fixed:diagshiftOK}
	\end{enumerate}
	All the lemmata are proved below. There are organized in order to emphasize on the role of the two main tools with natural operadic interpretations: foldings and square associativity. 
	
	The first two points use only foldings as defined in section~\ref{sec:fold}, which are used to map results on transfer matrices and 1D dynamics to the diagonal blocks of the corners (they do not make orthogonal directions interact).
	
	The last two points are purely two-dimensional: the definitions used for the operators $V^{\South\West}_{\nwarrow}$ and $V^{\South\West}_{\nwarrow}$ have a one-dimensional nature but their adjunction property requires to mix two orthogonal directions. Moreover the diagonal blocks obtained from a 1D dynamics are required to satisfy a shift property in the second direction. Proving these properties requires, in a way or another, to use the purely two-dimensional part of the guillotine operad: the square associativity discussed below.
\end{proof}

\paragraph*{The folding part of the proof}

\begin{lemm}\label{lemm:fixed:diagOK}
	The diagonal blocks $C^{\South\West}_{a,a}$ with $a\in\{\North,\East\}$ given by folding in \eqref{eq:corner:fixed:blocks}  satisfy the two fixed point equations. \eqref{eq:corner:fixed:EE:1D} and \eqref{eq:corner:fixed:NN:1D}.
\end{lemm}
\begin{proof}
	This is direct consequence of the fixed point property of the half-plane elements with an additional fold. By construction, we have
	\begin{align*}
		G^*_{\infty_\West}(u) &=
		\Schur_{1D}(G^*_{\infty_\West}(u), \ca{S}^{Q}_{\South\North}(u))
	\end{align*}
	Applying the morphism property of the folding \eqref{eq:fold:morphism} to this equation, we immediately obtain \eqref{eq:corner:fixed:EE:1D}. The same computation in the transverse direction provides \eqref{eq:corner:fixed:NN:1D}.
\end{proof}

\begin{lemm}\label{lemm:fixed:VOK}
	The blocks \eqref{eq:corner:fixed:blocks} satisfy the non-diagonal equations \eqref{eq:corner:fixed:nondiagonal:NE} and \eqref{eq:corner:fixed:nondiagonal:EN}.
\end{lemm}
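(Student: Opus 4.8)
The plan is to reproduce the argument of proposition~\ref{prop:hs:fixed:proofnondiag}: rewrite the fixed-point equation \eqref{eq:corner:fixed:nondiagonal:NE} as an explicit recursion on the components of the transverse block and recognise in it the recursive definition of $V^{\South\West}_{\nwarrow}$ from theorem~\ref{theo:corner:fixedpoint:formulae}. Because the diagonal reflection $s_\Delta$ of the dihedral group exchanges $\West\leftrightarrow\South$ and $\East\leftrightarrow\North$ while preserving the South-West corner, it carries \eqref{eq:corner:fixed:nondiagonal:NE} onto \eqref{eq:corner:fixed:nondiagonal:EN} and $V^{\South\West}_{\nwarrow}$ onto $V^{\South\West}_{\searrow}$. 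Under the standing dihedral invariance it therefore suffices to prove \eqref{eq:corner:fixed:nondiagonal:NE} for $C^{\South\West}_{\North,\East}=V^{\South\West}_{\nwarrow}$; equation \eqref{eq:corner:fixed:nondiagonal:EN} follows \emph{mutatis mutandis} for $V^{\South\West}_{\searrow}$, the two operators being adjoint by lemma~\ref{lemm:corner:couplingsadjoint}.

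First I would record the boundary data. Rotating \eqref{eq:fixed:block:transverse} to the South side gives $B^{\South}_{\West,\North}=B^{\South}_{\East,\North}=U^{\South}_Q$, hence by self-adjointness of the half-strip element $B^{\South}_{\North,\West}=B^{\South}_{\North,\East}=(U^{\South}_Q)^{*}$. The crucial step is then the transverse analogue of the one-dimensional identity $K^{-1}A=-W^{L}$ of proposition~\ref{prop:hs:fixed:proofnondiag}, namely $L_2^{-1}B^{\South}_{\West,\East}=-\fold^{\South}(\ha{W}_{\West})$. Using the block-preserving action of the fold, the diagonal formula \eqref{eq:corner:fixed:blocks:fold} and the dihedral rotation of \eqref{eq:fixed:block:halfline} identify $C^{\South\West}_{\East,\East}=\fold^{\South}(\ha{Q}^{\infty_{\West},\infty_{\South\North}})$, $B^{\South}_{\West,\West}=\fold^{\South}(\ha{Q}^{\infty_{\South\North}}(\cdot)_{\West\West})$ and $B^{\South}_{\West,\East}=\fold^{\South}(\ha{Q}^{\infty_{\South\North}}(\cdot)_{\West\East})$, so that $L_2=\fold^{\South}(\ha{Q}^{\infty_{\West},\infty_{\South\North}}+\ha{Q}^{\infty_{\South\North}}(\cdot)_{\West\West})$ by linearity of the fold. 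On the other hand, mode by mode in the South Fourier variable $u$, theorem~\ref{theo:uniquenessandexistence:halfplaneeigen:Fourier} identifies $\ha{Q}^{\infty_{\West},\infty_{\South\North}}(u)$ as the invariant boundary condition of the one-dimensional chain with edge weight $\ha{Q}^{\infty_{\South\North}}(u)$, for which the elementary relation $(\ha{Q}^{\infty_{\West},\infty_{\South\North}}(u)+\ha{Q}^{\infty_{\South\North}}(u)_{\West\West})^{-1}\ha{Q}^{\infty_{\South\North}}(u)_{\West\East}=-\ha{W}_{\West}(u)$ holds. Since under dihedral invariance all these symbols are even, the multiplicativity and inversion properties of the fold (lemma~\ref{lemm:multiplicative_prop_fold_op}) turn this pointwise identity into the desired operator identity $L_2^{-1}B^{\South}_{\West,\East}=-\fold^{\South}(\ha{W}_{\West})$.

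With these two facts the verification becomes purely algebraic, exactly as in the second path of proposition~\ref{prop:hs:charactblocks}. Substituting into the right-hand side of \eqref{eq:corner:fixed:nondiagonal:NE}, the upper entry becomes $V^{\South\West}_{\nwarrow}\fold^{\South}(\ha{W}_{\West})=\toep^{\West}(e^{i\bullet})V^{\South\West}_{\nwarrow}$ by the intertwining relation of lemma~\ref{lemm:FourierCoeffstoOperators:cornercase} (the corner analogue of \eqref{eq:WtoToep:throughU}), while the lower entry becomes $(U^{\South}_Q)^{*}(\id+\fold^{\South}(\ha{W}_{\West}))$, which is precisely the value at position $-1$ of $V^{\South\West}_{\nwarrow}$. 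Unfolding $D_1^{L}$ through $D_1^{L}(f,u)=\toep^{\West}(e^{-i\theta})f+u\,\indic{-1}$ from \eqref{eq:shiftmapasToeplitz}, and using that $\toep^{\West}(e^{-i\theta})\toep^{\West}(e^{i\theta})$ equals $\id_{\ca{W}_{\West}}$ minus the orthogonal projector onto $\indic{-1}$, the shift reproduces all the components of $V^{\South\West}_{\nwarrow}$ of index strictly below $-1$ while the $\indic{-1}$ term restores the component at $-1$; the right-hand side thus collapses to $V^{\South\West}_{\nwarrow}$ and \eqref{eq:corner:fixed:nondiagonal:NE} holds.

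I expect the genuine difficulty to lie in the key identity $L_2^{-1}B^{\South}_{\West,\East}=-\fold^{\South}(\ha{W}_{\West})$ rather than in the surrounding bookkeeping. In the half-strip case the analogous relation was elementary finite-dimensional algebra on $\ca{H}_2$, whereas here $L_2$ is a genuine operator on the infinite-dimensional $\ca{W}_{\South}$, and the fold commutes with inversion only for even symbols through lemma~\ref{lemm:multiplicative_prop_fold_op}; this is exactly why the dihedral hypothesis is imposed. The delicate point is that the Hankel (method-of-images) contributions hidden in the off-diagonal block $B^{\South}_{\West,\East}$ must be threaded correctly through the Schur-complement structure, so that the blockwise fold identifications above really hold. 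A fully rigorous treatment would either carry out the fold computation at the level of Fourier coefficients via \eqref{eq:1D:link:WwithFourier} and \eqref{eq:1D:FourierPhiInv:remarkable} (the painful first path, invoking also the transverse representation of lemma~\ref{lemm:Graal}), or derive all the needed relations directly from lemma~\ref{lemm:FourierCoeffstoOperators:cornercase}, whose intertwining and Hankel-representation identities are precisely the corner counterparts of lemma~\ref{lemm:FourierCoeffstoOperators} that closed the half-strip argument.
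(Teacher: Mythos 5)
Your proof is correct and follows essentially the same route as the paper's: both hinge on the pointwise one-dimensional identity yielding $L_2^{-1}B^{\South}_{\West,\East}=-\fold^{\South}(\ha{W}_{\West})$ via the fold's multiplicativity and inversion for even symbols (requiring the dihedral hypothesis, as you note), followed by the recursive reconstruction of $V^{\South\West}_{\nwarrow}$ through the shift map $D_1^L$ and the intertwining relation of lemma~\ref{lemm:FourierCoeffstoOperators:cornercase}. The only organizational difference is that you deduce \eqref{eq:corner:fixed:nondiagonal:EN} from \eqref{eq:corner:fixed:nondiagonal:NE} via the diagonal reflection $s_\Delta$ together with the adjunction lemma~\ref{lemm:corner:couplingsadjoint}, whereas the paper simply repeats the argument with the two directions exchanged ($L_1$ in place of $L_2$, West foldings of $\ha{W}_\South$ in place of South foldings of $\ha{W}_\West$) --- both are legitimate under the standing dihedral invariance.
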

\begin{proof}
	Using the results of \cite{Bodiot} on $W^{L|R}$ operators in dimension one, we have
	\begin{align*}
		\ca{S}^Q_{\South\North}(u)_{\West\West}+ G^*_{\infty_\West}
		&= - \ca{S}^Q_{\South\North}(u)\ha{W}_\West(u)^{-1}
	\end{align*}
	The operator $L_2$ in \eqref{eq:corner:fixed:nondiagonal:NE} is obtained by folding this expression with $\fold^{\South}$ and using the morphism property of folding  \eqref{eq:fold:morphism} to obtain 
	$L_2^{-1} = -\fold^\South(\ha{W}_\West (\ca{S}_{\South\North}^{Q})^{-1}  )$.
	Using this, the r.h.s.~of \eqref{eq:corner:fixed:nondiagonal:NE} is given by \[
	D_1^L\begin{pmatrix}
		V^{\South\West}_{\nwarrow} \fold^{\South}(\ha{W}_\West)
		\\
		(U_Q^\South)^* (\id+\fold^{\South}(\ha{W}_\West))
	\end{pmatrix}
	=
	V^{\South\West}_{\nwarrow}
	\]
	from the definitions of the map $D_1^L$ and the geometric structure in the definition of $V^{\South\West}_{\nwarrow}$, hence obtaining the result. The second equation \eqref{eq:corner:fixed:nondiagonal:EN} is proved similarly by permuting the two directions.
\end{proof}

\begin{lemm}\label{lemm:fixed:diagshiftOK}
	The diagonal blocks $C^{\South\West}_{a,a}$ with $a\in\{\North,\East\}$ defined in  \eqref{eq:corner:fixed:blocks} also satisfy \eqref{eq:corner:fixed:NN:shiftinv} and \eqref{eq:corner:fixed:EE:shiftinv}.
\end{lemm}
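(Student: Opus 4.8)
The plan is to reuse verbatim the two-step strategy of proposition~\ref{prop:hs:charactblocks}, since \eqref{eq:corner:fixed:NN:shiftinv} and \eqref{eq:corner:fixed:EE:shiftinv} have exactly the same formal shape as the half-strip South--North equation \eqref{eq:Westfixedpoint:SNtofold}: the half-plane element $\ha{Q}^{\infty_{\West\East},\infty_\South}$ plays the role of the strip element $\ov{Q}^{\infty_{\West\East}}$, the transverse couplings $V^{\South\West}_{\nwarrow}$ and $V^{\South\West}_{\searrow}$ play the role of $U^{\West}_Q$, the folded operators $\fold^\West(\ha{W}_\South)$ play the role of $W^L_{Q,\West\East}$, and the cut block $L_2$ plays the role of $K_\West$. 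I treat \eqref{eq:corner:fixed:NN:shiftinv} in detail and obtain \eqref{eq:corner:fixed:EE:shiftinv} by interchanging the two directions ($\fold^\West\leftrightarrow\fold^\South$, $\ha{W}_\South\leftrightarrow\ha{W}_\West$, $L_2\leftrightarrow L_1$).

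First I would reduce \eqref{eq:corner:fixed:NN:shiftinv} to a recursion. Testing the identity against the elementary functions $\indic{k},\indic{l}$ with $k,l<0$ and using that $D_1^L$ acts as a shift on $\setZ$ --- equivalently through its Toeplitz representation \eqref{eq:shiftmapasToeplitz} --- turns \eqref{eq:corner:fixed:NN:shiftinv} into a recursion on the partial contractions $\scal{\indic{k}}{C^{\South\West}_{\North,\North}\indic{l}}$, expressing the coefficient of order $(k,l)$ in terms of the one of order $(k+1,l+1)$ plus correction terms built from the coefficients of $V^{\South\West}_{\nwarrow}$, $V^{\South\West}_{\searrow}$, the South half-strip blocks provided by theorem~\ref{theo:hs:fixedpoint:formulae}, and $L_2^{-1}$, with separate boundary cases at $k=-1$ or $l=-1$. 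This step is purely formal and identical in spirit to the one already carried out for the half-strip.

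Then I would verify that the fold formula \eqref{eq:corner:fixed:blocks:fold}, namely $C^{\South\West}_{\North,\North}=\fold^\West(\ha{Q}^{\infty_{\West\East},\infty_\South})$, satisfies this recursion, following the algebraic (rather than coefficient-by-coefficient) path. The corner intertwining lemma~\ref{lemm:FourierCoeffstoOperators:cornercase} is exactly tailored for this: its first identity absorbs the shift produced by $D_1^L$ into a Toeplitz factor, while its Hankel identities --- together with the identification of $L_2^{-1}$ with $\fold^\South(\CPsiInv^{2,2}_{0,\bullet})$, obtained from the half-plane fixed point as in the proof of the non-diagonal equations --- identify the correction terms with $\hank^\West(\ha{Q}^{\infty_{\West\East},\infty_\South})$ and $P^\West\ha{Q}^{\infty_{\West\East},\infty_\South}P^\East$. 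Combining these with the Fourier-coefficient recursion \eqref{eq:phiK:recursion:FourierCoeffs}, read off mode by mode from \eqref{eq:halfplaneWfromFourier} for the half-plane coupling, makes both sides of the recursion coincide as operators, exactly as \eqref{eq:Westfixedpoint:SNtofold} was matched to \eqref{eq:fixed:block:halfline} in section~\ref{sec:proof:hsfixed}.

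The main obstacle is the bookkeeping in the second step. As stressed in section~\ref{sec:corners}, the base object of the recursion is no longer the face operator $Q$ but an already-folded half-plane coupling, so one must compose fold operations and keep track simultaneously of the translation-invariant (Toeplitz) and boundary (Hankel) parts; getting the boundary cases $k=-1$ and $l=-1$ to match requires the careful use of lemma~\ref{lemm:FourierCoeffstoOperators:cornercase} together with the half-plane fixed-point property of theorem~\ref{theo:uniquenessandexistence:halfplaneeigen:Fourier}. Once these intertwinings are in place the remaining algebra is routine and mirrors section~\ref{sec:proof:hsfixed}, which completes the verification that $(\ov{Q}^{[\infty_\West,1]},\ov{Q}^{[1,\infty_\South]},\ov{Q}^{[\infty_\West,\infty_\South]})$ is a South--West corner system of fixed points of $Q$ up to morphisms.
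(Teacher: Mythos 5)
Your overall route is the one the paper itself takes: the paper likewise abandons the coefficient-by-coefficient recursion for corners (it explicitly calls that path painful, since the base object is now the folded half-plane coupling rather than $Q$) and verifies \eqref{eq:corner:fixed:NN:shiftinv} directly at the operator level, computing the blocks of the right-hand side before conjugation by $D_1^L$, simplifying them with lemma~\ref{lemm:FourierCoeffstoOperators:cornercase} and a small intertwining lemma for $\ha{W}_\West$, and reassembling at the end through the Toeplitz representation \eqref{eq:shiftmapasToeplitz} of $D_1^L$. In structure and in the choice of key lemmas your proposal and the paper's proof coincide, including the reduction of \eqref{eq:corner:fixed:EE:shiftinv} to \eqref{eq:corner:fixed:NN:shiftinv} by exchanging the two directions.

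There is, however, one stated identity that is wrong, and it is exactly the one the verification hinges on. You identify $L_2^{-1}$ with $\fold^\South(\CPsiInv^{2,2}_{0,\bullet})$; the proof of the non-diagonal equations \eqref{eq:corner:fixed:nondiagonal:NE} (the source you cite) actually gives $L_2^{-1}=-\fold^\South\bigl(\ha{W}_\West\,(\ha{Q}^{\infty_{\South\North}}_{\West\East})^{-1}\bigr)$, which by the $\ha{W}$-identities equals $\fold^\South(\CPsiInv^{2,2}_{0,\bullet})-\fold^\South(\ha{W}_\West)\,\fold^\South(\CPsiInv^{2,2}_{0,\bullet})\,\fold^\South(\ha{W}_\West)^*$. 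The second term is not a harmless remainder: through the intertwining $V^{\South\West}_{\nwarrow}\fold^\South(\ha{W}_\West)=\toep^\West(e^{i\bullet})V^{\South\West}_{\nwarrow}$ it generates the shift-conjugated Hankel correction $\toep^\West(e^{i\bullet})\,\hank^\West(\ha{Q}^{\infty_{\West\East},\infty_\South})\,\toep^\West(e^{-i\bullet})$, which is precisely what turns the $\ca{W}_\West$-block of the right-hand side into $\toep^\West(e^{i\bullet})\,C^{\South\West}_{\North,\North}\,\toep^\West(e^{-i\bullet})$ so that conjugation by $D_1^L$ closes up. With your formula the correction collapses to $\hank^\West(\ha{Q}^{\infty_{\West\East},\infty_\South})$ alone, that block becomes the bare Toeplitz operator $\toep^\West(\ha{Q}^{\infty_{\West\East},\infty_\South})$, and the final $D_1^L$-conjugation then reproduces only the translation-invariant (Toeplitz) part of $C^{\South\West}_{\North,\North}$, losing its Hankel part entirely: the identity you are trying to prove would fail. (Also, the identity $-V^{\South\West}_{\nwarrow}\fold^{\South}(\CPsiInv^{2,2}_{0,\bullet})V^{\South\East}_{\swarrow}=P^\West\ha{Q}^{\infty_{\West\East},\infty_\South}P^\East$ you invoke plays no role here; it is used for gluing two corners into a half-plane, not for the diagonal-block equations.) The repair is exactly the telescoping you mention in passing but never perform; the paper carries it out block by block.
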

\begin{proof}
	We prove only \eqref{eq:corner:fixed:NN:shiftinv} and the second equation is obtained by similar computations in the transverse directions. We first establish a lemma about the operators $\ha{W}_\West(u)$, which is a generalization of lemma~\ref{lemm:FourierCoeffstoOperators} by identifying a 1D process for each $u$.
	\begin{lemm}
		We have the following identities under the assumptions of dihedral symmetry and \ref{assump:2Deasy}, for all $u\in S^1$:
		\begin{subequations}
			\begin{align}
				\ha{W}_\West(u) \CPsiInv^{2,2}_{0,\bullet}(u) &=  \CPsiInv^{2,2}_{0,\bullet}(u) \ha{W}_\West(u)^*
				\\
				\ha{W}_\West(u) (\ca{S}_{\South\North}^Q(u)_{\West,\East})^{-1} &= \left( \ha{W}_\West(u)^2 -\id_{\ca{H}_1}\right) \CPsiInv^{2,2}_{0,\bullet}
			\end{align}
		\end{subequations}
	\end{lemm}
	We also need the following properties. All the proofs are consequence of the recursive properties of the Fourier coefficients $\CPsiInv_{k,l}$ of $\Psi_Q^{-1}$ sometimes with additional foldings that satisfy \eqref{eq:fold:morphism} (see \cite{BodiotThesis} for detailed computation if needed).
	\begin{lemm}\label{lemm:FourierCoeffstoOperators:cornercase}
		The following intertwining relations are valid:
		\begin{align*}
			V^{\South\West}_{\searrow} \fold^\West(\ha{W}_\South) &=  \toep^{\South}(e^{i\bullet}) V^{\South\West}_{\searrow}
			&
			V^{\South\West}_{\nwarrow} \fold^\South\left(
			\ha{W}_\West\right) &= \toep^\West(e^{i\bullet}) V^{\South\West}_{\nwarrow}
			\\
			-V^{\South\West}_{\nwarrow} \fold^{\South}(\CPsiInv^{2,2}_{0,\bullet}) V^{\South\West}_{\searrow} &= \hank^{\West}(\ha{Q}^{\infty_{\West\East},\infty_{\South}})
			&
			-V^{\South\East}_{\nearrow} \fold^{\South}(\CPsiInv^{2,2}_{0,\bullet}) V^{\South\East}_{\swarrow} &= \hank^{\East}(\ha{Q}^{\infty_{\West\East},\infty_{\South}})
			\\
			-V^{\South\West}_{\nwarrow} \fold^{\South}(\CPsiInv^{2,2}_{0,\bullet}) V^{\South\East}_{\swarrow} &= P^\West \ha{Q}^{\infty_{\West\East},\infty_{\South}} P^\East
		\end{align*}
	\end{lemm}
	
	We now consider the r.h.s.~$R$ of \eqref{eq:corner:fixed:NN:shiftinv} without the conjugation by $D_1^L$ block by block and simplify each expression. We have, using the previous expression for $L_2$ and the previous lemma, the following identities:
	\begin{align*}
		R_{1,1} =& \fold^{\West}( G^*_{\infty_\South} ) + V^{\South\West}_{\nwarrow} \fold^\South\left(
		\ha{W}_\West  (\ca{S}_{\South\North}^Q(u)_{\West,\East})^{-1}\right) V^{\South\West}_{\searrow}
		\\
		=& \fold^{\West}(G^*_{\infty_\South}) + V^{\South\West}_{\nwarrow} \fold^\South\left(
		(\ha{W}_\West^2-\id) \CPsiInv^{2,2}_{0,\bullet}\right) V^{\South\West}_{\searrow}
		\\
		=&\fold^{\West}(G^*_{\infty_\South}) - V^{\South\West}_{\nwarrow} \fold^\South\left(
		\CPsiInv^{2,2}_{0,\bullet}\right) V^{\South\West}_{\searrow} 
		\\
		&+ V^{\South\West}_{\nwarrow} \fold^\South\left(
		\ha{W}_\West\right)  \fold^\South\left(\CPsiInv^{2,2}_{0,\bullet}\right) \left(V^{\South\West}_{\nwarrow}\fold^\South\left(
		\ha{W}_\West\right) \right)^*
	\end{align*}
	We now use lemma~\ref{lemm:FourierCoeffstoOperators:cornercase} and its equivalent lemma in the transverse direction to simplify each term and we obtain
	\begin{align*}
		R_{1,1} &= \fold^{\West}(  G^*_{\infty_\South}) + \hank^\West( G^*_{\infty_\South}) + \toep^\West(e^{i\bullet}) V^{\South\West}_{\nwarrow}\fold^\South\left(\CPsiInv^{2,2}_{0,\bullet}\right) V^{\South\West}_{\searrow} \toep^\West(e^{-i\bullet})
		\\
		&= \toep^{\West}(  G^*_{\infty_\South})+ \toep^\West(e^{i\bullet}) \hank^\West(\ha{Q}^{\infty_{\West\East},\infty_{\South}}) \toep^\West(e^{-i\bullet})
		= \toep^{\West}(e^{i\bullet})  C^{\South\West}_{\North,\North}   \toep^{\West}(e^{-i\bullet})
	\end{align*}
	We now treat the other terms with the same rules. We have indeed
	\begin{align*}
		R_{1,2} &=  V^{\South\West}_{\nwarrow} \fold^{\South}\left(\ha{W}_\West
		(\ca{S}_{\South\North}^Q(u)_{\West,\East})^{-1}\right)  U_Q^\South
		= V^{\South\West}_{\nwarrow} \fold^{\South}\left((\ha{W}_\West^2-\id) \CPsiInv^{2,2}_{0,\bullet}\right)  U_Q^\South
		\\
		&= -V^{\South\West}_{\nwarrow} \fold^{\South}\left( \CPsiInv^{2,2}_{0,\bullet}\right)  U_Q^\South
		+ V^{\South\West}_{\nwarrow} \fold^{\South}(\ha{W}_\West) \fold^\South(\CPsiInv^{2,2}_{0,\bullet}) (\fold^{\South}(\ha{W}_\West))^*  U_Q^\South
		\\
		&= -V^{\South\West}_{\nwarrow} \fold^{\South}\left( \CPsiInv^{2,2}_{0,\bullet}\right) (\id+ \fold^{\South}(\ha{W}_\West))^* U_Q^\South
		+ V^{\South\West}_{\nwarrow} \fold^{\South}(\ha{W}_\West) \fold^\South(\CPsiInv^{2,2}_{0,\bullet}) (\id+\fold^{\South}(\ha{W}_\West))^*  U_Q^\South
	\end{align*}
	where we have introduced a telescopic term in the last equation. Using the definition of $V^{\South\West}_{\nwarrow}$ at $-1$, we obtain from lemma~\ref{lemm:FourierCoeffstoOperators:cornercase}
	$
	R_{1,2} =  \toep^\West(e^{i\bullet})C^{\South\West}_{\North,\North} \iota_{-1}$	
	where $\iota_{-1}: \ca{H}_1\to \ca{W}_{\West}$ is defined by $\iota_{-1}(u) = u\indic{-1}$. Indeed, it is easy to check that $\hank^\West(A)\iota_{-1} = \toep^\West(e^{i\bullet}) \toep^{\West}(A)\iota_{-1}$. Taking the adjoint produces a similar identity for $R_{2,1}$. We are now left with the last block $R_{2,2}$: 
	\begin{align*}
		R_{2,2} &= B^\South_{\North,\North} - (U_Q^\South)^* \fold^\South\left(\ha{W}_\West
		(\ca{S}_{\South\North}^Q(u)_{\West,\East})^{-1}\right)U_Q^\South
		\\
		&=B^\South_{\North,\North} + (U_Q^\South)^* \fold^\South\left( \CPsiInv^{2,2}_{0,\bullet} \right) U_Q^\South -
		(U_Q^\South)^* \fold^{\South}(\ha{W}_\West) \fold^\South\left( \CPsiInv^{2,2}_{0,\bullet} \right)\fold^{\South}(\ha{W}_\West)^* U_Q^\South  
		\\
		&=\iota_{-1}^* \fold^{\West}(G^*_{\infty_\South}) \iota_{-1}
	\end{align*}
	where we have introduced telescopic terms in order to use lemma~\ref{lemm:FourierCoeffstoOperators:cornercase} to make Hankel terms appear: the remaining terms precisely corresponds to \[\scal{\indic{-1}}{\toep^\West(G^*_{\infty_\South})\indic{-1}}= \Fourier_0(G^*_{\infty_\South})\] from the transverse representation lemma~\ref{lemm:Graal} below (which is provided by the square associativity). The r.h.s. of \eqref{eq:corner:fixed:NN:shiftinv} is now given, using \eqref{eq:shiftmapasToeplitz}, by 
	$D_1^L R (D_1^L)^*  = C^{\South\West}_{\North,\North}$ and we obtain the result.
\end{proof}

\paragraph*{The square associativity part of the proof}

Square associativity (see \cite{Simon} for its role in guillotine operads) asserts that a product of four elements in a subdivised rectangle can be performed sequentially starting in any dimension:
\[
\begin{tikzpicture}[guillpart,yscale=1.1,xscale=1.1]
	\fill[guillfill] (0,0) rectangle (2,2);
	\draw[guillsep] (0,2)--(2,2)--(2,0)--(0,0)--(0,2) (1,0)--(1,2) (0,1)--(2,1);
	\node at (0.5,0.5) {$Q_1$};
	\node at (0.5,1.5) {$Q_2$};
	\node at (1.5,0.5) {$Q_3$};
	\node at (1.5,1.5) {$Q_4$};
\end{tikzpicture}_\ca{Q}
= \begin{tikzpicture}[guillpart,yscale=.9,xscale=.9]
	\fill[guillfill] (0,0) rectangle (2,1);
	\draw[guillsep] (0,0)--(2,0)--(2,1)--(0,1)--(0,0) (1,0)--(1,1);
	\node at (0.5,0.5) {$1$};
	\node at (1.5,0.5) {$2$};
\end{tikzpicture}\circ
\left(
\begin{tikzpicture}[guillpart,yscale=1.1,xscale=1.1]
	\fill[guillfill] (0,0) rectangle (1,2);
	\draw[guillsep] (0,0)--(1,0)--(1,2)--(0,2)--(0,0) (0,1)--(1,1);
	\node at (0.5,0.5) {$Q_1$};
	\node at (0.5,1.5) {$Q_3$};
\end{tikzpicture}
,		
\begin{tikzpicture}[guillpart,yscale=1.1,xscale=1.1]
	\fill[guillfill] (0,0) rectangle (1,2);
	\draw[guillsep] (0,0)--(1,0)--(1,2)--(0,2)--(0,0) (0,1)--(1,1);
	\node at (0.5,0.5) {$Q_2$};
	\node at (0.5,1.5) {$Q_4$};
\end{tikzpicture}
\right)
=\begin{tikzpicture}[guillpart,yscale=.9,xscale=.9]
	\fill[guillfill] (0,0) rectangle (1,2);
	\draw[guillsep] (0,0)--(1,0)--(1,2)--(0,2)--(0,0) (0,1)--(1,1);
	\node at (0.5,0.5) {$1$};
	\node at (0.5,1.5) {$2$};
\end{tikzpicture}\circ\left(
\begin{tikzpicture}[guillpart,yscale=1.1,xscale=1.1]
	\fill[guillfill] (0,0) rectangle (2,1);
	\draw[guillsep] (0,0)--(2,0)--(2,1)--(0,1)--(0,0) (1,0)--(1,1);
	\node at (0.5,0.5) {$Q_1$};
	\node at (1.5,0.5) {$Q_2$};
\end{tikzpicture}
,		
\begin{tikzpicture}[guillpart,yscale=1.1,xscale=1.1]
	\fill[guillfill] (0,0) rectangle (2,1);
	\draw[guillsep] (0,0)--(2,0)--(2,1)--(0,1)--(0,0) (1,0)--(1,1);
	\node at (0.5,0.5) {$Q_3$};
	\node at (1.5,0.5) {$Q_4$};
\end{tikzpicture}
\right)
\]
The boundary elements are obtained through fixed points but their probabilistic interpretation as marginal weights coming from an infinite-volume Gibbs measures suggest interpretations such as:
\begin{align}\label{eq:westhalfplane:hypotheticallimit}
	\ca{S}^Q_{\South\North} &\overset{\mathbf{?}}{=}"\lim_{q\to\infty}" \begin{tikzpicture}[guillpart,yscale=1.,xscale=1.2]
		\fill[guillfill] (0,0) rectangle (1,3);
		\draw[guillsep] (0,0)--(1,0)--(1,3)--(0,3)--(0,0) (0,1)--(1,1) (0,2)--(1,2) ;
		\node at (0.5,0.5) {$Q$};
		\node at (0.5,1.5) {$\vdots $};
		\node at (0.5,2.5) {$Q$};
		\draw[<->] (-0.2,0.) -- node [left] {$q$} (-0.2,3.);
	\end{tikzpicture}
	&
	\ov{Q}^{[\infty_\West,1]} &\overset{\mathbf{?}}{=}
	"\lim_{p\to\infty}" \begin{tikzpicture}[guillpart,yscale=1.,xscale=1.2]
		\fill[guillfill] (0,0) rectangle (3,1);
		\draw[guillsep] (0,0)--(3,0)--(3,1)--(0,1)--(0,0) (1,0)--(1,1) (2,0)--(2,1);
		\node at (0.5,0.5) {$Q$};
		\node at (1.5,0.5) {$\ldots$};
		\node at (2.5,0.5) {$Q$};
		\draw[<-] (0,-0.2) -- node [below] {$p$} (3,-0.2);
	\end{tikzpicture}
\end{align}
These analytical limits are not proved in the present paper: we only use them to guess the identities to use. Under these interpretations, we expect:
\begin{align}\label{eq:swcorner:hypotheticlimit}
	\ov{Q}^{[\infty_\West,\infty_{\South\North}]}
	\overset{\mathbf{?}}{=}
	"\lim_{p,q\to\infty}" \begin{tikzpicture}[guillpart,yscale=0.9,xscale=1.]
		\fill[guillfill] (0,0) rectangle (3,3);
		\draw[guillsep] (0,0)--(3,0)--(3,3)--(0,3)--(0,0) (0,1)--(3,1) (0,2)--(3,2) (1,0)--(1,3) (2,0)--(2,3);
		\node at (0.5,0.5) {$Q$};
		\node at (0.5,1.5) {$\vdots $};
		\node at (0.5,2.5) {$Q$};
		\node at (1.5,0.5) {$\ldots$};
		\node at (1.5,1.5) {$\ddots $};
		\node at (1.5,2.5) {$\ldots$};
		\node at (2.5,0.5) {$Q$};
		\node at (2.5,1.5) {$\vdots $};
		\node at (2.5,2.5) {$Q$};
		\draw[<->] (3.2,0.) -- node [right] {$q$} (3.2,3.);
		\draw[<-] (0,-0.2) -- node [below] {$p$} (3,-0.2);
	\end{tikzpicture}
\end{align}
The right big guillotine partition can be organized in two ways using square associativity provided the conjectural identity:
\begin{align*} \lim_{p\to\infty} 
	\begin{tikzpicture}[guillpart,yscale=1.5,xscale=1.5]
		\fill[guillfill] (0,0) rectangle (3,1);
		\draw[guillsep] (0,0)--(0,1) (1,0)--(1,1) (2,0)--(2,1) (3,0)--(3,1);
		\node at (0.5,0.5) {$\ca{S}^Q_{\South\North}$};
		\node at (1.5,0.5) {$\ldots$};
		\node at (2.5,0.5) {$\ca{S}^Q_{\South\North}$};
		\draw[<-] (0,-0.2) -- node [below] {$p$} (3,-0.2);
	\end{tikzpicture} 
	\overset{\mathbf{?}}{=} \lim_{p\to\infty} \begin{tikzpicture}[guillpart,yscale=1.1,xscale=1.5]
		\fill[guillfill] (0,0) rectangle (2,3);
		\draw[guillsep] (0,0)--(2,0)--(2,3)--(0,3) (0,1)--(2,1) (0,2)--(2,2);
		\node at (1,0.5) {$\ov{Q}^{[\infty_\West,1]}$};
		\node at (1,1.5) {$\vdots$};
		\node at (1,2.5) {$\ov{Q}^{[\infty_\West,1]}$};	
		\draw[<->] (2.2,0.) -- node [right] {$q$} (2.2,3.);
	\end{tikzpicture}
\end{align*}
On the left part, this corresponds to an horizontal 1D dynamics in $\ca{W}_{\South\North}$ and, in the Fourier space, $\ov{Q}^{[\infty_\West,\infty_{\South\North}]}$ is given by the 1D fixed point $G^*_{\infty_\West}$. The right part correspond to a vertical 1D dynamics on $\ca{W}_\West$ which can be studied by Fourier transform and integrating out mode by mode. This is formalized in the following key lemma, used at the end of the previous proof.

\begin{lemm}[transverse representation of half-plane fixed points]\label{lemm:Graal}
	The half-plane element $	G^*_{\infty_\West}(u)$ is equal, for all $u\in S^1$, to
	\begin{equation}\label{eq:graal:otherrep}
		\begin{split}
			G^*_{\infty_\West}(u)
			&= 	
			\ov{Q}^{[\infty_\West,1]}_{\East\East} 
			-
			\left(
			\ov{Q}^{[\infty_\West,1]}_{\East\South} 
			+
			\ov{Q}^{[\infty_\West,1]}_{\East\North} u
			\right)
			\phi_{\patterntype{hs}_\West}(u)^{-1}
			\left(
			\ov{Q}^{[\infty_\West,1]}_{\South\East} 
			+
			\ov{Q}^{[\infty_\West,1]}_{\North\East} u^{-1}
			\right)
			\\
			&= \mathrm{Schur}_{\ca{W}_\West}\begin{pmatrix}
				\ov{Q}^{[\infty_\West,1]}_{\East\East} & \ov{Q}^{[\infty_\West,1]}_{\East\South} 
				+
				\ov{Q}^{[\infty_\West,1]}_{\East\North} u
				\\
				\ov{Q}^{[\infty_\West,1]}_{\South\East} 
				+
				\ov{Q}^{[\infty_\West,1]}_{\North\East} u^{-1}& 	\phi_{\patterntype{hs}_\West}(u)^{-1}
			\end{pmatrix}
		\end{split}
	\end{equation}
	with the operator $\phi_{\patterntype{hs}_\West}(u)$ defined by
	\[
	\phi_{\patterntype{hs}_\West}(u)^{-1} = \ov{Q}^{[\infty_\West,1]}_{\North\North} +  \ov{Q}^{[\infty_\West,1]}_{\South\South} + \ov{Q}^{[\infty_\West,1]}_{\South\North} u + \ov{Q}^{[\infty_\West,1]}_{\North\South}u^{-1}
	\]
	obtained from the half-strip fixed point element $\ov{Q}^{[\infty_\West,1]}$.
\end{lemm}
\begin{proof}
	A very long proof may consist in writing all the operators in terms of powers of $W^{L|R}_K$ operators and then use Fourier coefficients representation of these operators but it totally hides the nature of the nature. We first consider
	\begin{align*}
		\begin{tikzpicture}[guillpart,yscale=0.5,xscale=2.]
			\fill[guillfill] (0,0) rectangle (3,3);
			\draw[guillsep] (2,0)--(2,3) (3,0)--(3,3);
			\node at (1,1.5) {r.h.s.\eqref{eq:graal:otherrep}};
			\node at (2.5,1.5) {$\ov{Q}^{\infty_{\South\North}}$};
		\end{tikzpicture}_{\ca{Q}}(u)
		& = 
		\Schur^{1D}\left(
		\mathrm{Schur}_{\ca{W}_\West}\Upsilon^v_{\ov{Q}^{[\infty_\West,1]}}(u),
		\mathrm{Schur}_{\ca{H}_1}\Psi^v_{Q}(u)
		\right)
	\end{align*}
	where $\Upsilon_{\ov{Q}^{[\infty_\West,1]}}(u)$ is the matrix in \eqref{eq:graal:otherrep} and $\Schur^{1D}$ is the one-dimensional Schur product \eqref{eq:Schur1D:def}. Using the associativity of the Schur complements, we then obtain, 
	\begin{align*}
		\begin{tikzpicture}[guillpart,yscale=0.5,xscale=2.]
			\fill[guillfill] (0,0) rectangle (3,3);
			\draw[guillsep] (2,0)--(2,3) (3,0)--(3,3);
			\node at (1,1.5) {r.h.s.\eqref{eq:graal:otherrep}};
			\node at (2.5,1.5) {$\ov{Q}^{\infty_{\South\North}}$};
		\end{tikzpicture}_{\ca{Q}}(u)
		&=	\mathrm{Schur}_{\ca{W}_\West\oplus\ca{H}_1} \circ\Upsilon^v_{Q'}(u)
	\end{align*}
	where $Q'$ is the following gluing of a face $Q$ on the West half-strip \emph{before the shift $D^1$}:
	\begin{align*}
		Q' &= \mathrm{Schur}_{\ca{H}_2^\mathrm{cut}}
		(j_{
			\begin{tikzpicture}[guillpart,yscale=0.4,xscale=0.4]
				\fill[guillfill] (0,0) rectangle (2,1);
				\draw[guillsep] (0,0)--(2,0)--(2,1)--(0,1) (1,0)--(1,1);
			\end{tikzpicture}
		}\left(\ov{Q}^{[\infty_\West,1]},Q\right)) 	&= D_1^L
		\begin{tikzpicture}[guillpart,yscale=1.5,xscale=1.5]
			\fill[guillfill] (0,0) rectangle (3,1);
			\draw[guillsep] (0,0)--(3,0)--(3,1)--(0,1) (2,0)--(2,1);
			\node at (1,0.5) {$\ov{Q}^{[\infty_\West,1]}$};
			\node at (2.5,0.5) {$Q$};
		\end{tikzpicture}_Q^{D^L}
		(D_1^L)^{-1} 
		\\
		&= D_1^L \begin{tikzpicture}[guillpart,yscale=1.5,xscale=1.5]
			\fill[guillfill] (0,0) rectangle (2,1);
			\draw[guillsep] (0,0)--(2,0)--(2,1)--(0,1) ;
			\node at (1,0.5) {$\ov{Q}^{[\infty_\West,1]}$};
		\end{tikzpicture}_Q
		(D_1^L)^{-1}
	\end{align*}
	A direct computation shows that the conjugation by $D_1^L$ commutes with $\Upsilon^v_\bullet(u)$ and intertwines $\mathrm{Schur}_{\ca{W}_{\West}\oplus\ca{H}_1}$ with $\mathrm{Schur}_{\ca{W}_{\West}}$, so that we have
	\[
	\begin{tikzpicture}[guillpart,yscale=0.4,xscale=2.]
		\fill[guillfill] (0,0) rectangle (3,3);
		\draw[guillsep] (2,0)--(2,3) (3,0)--(3,3);
		\node at (1,1.5) {r.h.s.\eqref{eq:graal:otherrep}};
		\node at (2.5,1.5) {$\ov{Q}^{\infty_{\South\North}}$};
	\end{tikzpicture}_{\ca{Q}}(u) 
	= \mathrm{r.h.s.~\eqref{eq:graal:otherrep}}
	\]
	For each $u\in S^1$, the unicity of the fixed point under assumption~\ref{assump:2Deasy} provides that the r.h.s.~of \eqref{eq:graal:otherrep} is then equal to $\ov{Q}^{\infty_{W},\infty_{\South\North}}(u)$.
\end{proof}

Square associativity has a second incarnation in the following adjunction lemma.

\begin{lemm}\label{lemm:fixed:VadjointOK}
	The two operators $V^{\South\West}_{\nwarrow}$ and $V^{\South\West}_{\searrow}$ defined in \eqref{eq:def:cornerV} are bounded and adjoint to each other.
\end{lemm}

\begin{proof}
	All the operators $W^{L|R}_\bullet(u)$ have an operator norm strictly smaller than one and folding is a morphism so that all the geometric sequences are convergent in the operator norm and the two operators are bounded.
	
	In order to find the correct commutative diagram, we first need to understand the structure of the formulae for $V^{\South\West}_{\searrow}$ and $V^{\South\West}_{\nwarrow}$ in terms of infinite gluing of half-strips in one direction or another:
	\begin{equation}\label{eq:adjunction:squareassoc}
		\begin{tikzpicture}[guillpart,yscale=1.15,xscale=1.5]
			\fill[guillfill] (0,0) rectangle (4,4);
			\draw[guillsep] (4,0)--(4,4)--(0,4) (0,1)--(4,1) (0,2)--(4,2) (0,3)--(4,3);
			\node at (2,0.5) {$\vdots$};
			\node at (2,1.5) {$\ov{Q}^{[\infty_\West,1]}$};
			\node at (2,2.5) {$\ov{Q}^{[\infty_\West,1]}$};
			\node at (2,3.5) {$\ov{Q}^{[\infty_\West,1]}$};
			\draw[ultra thick,red,->] (3,4)--(3,3);
			\draw[ultra thick,red,->] (3,3)--(3,2);
			\draw[ultra thick,red,->] (3,2)--(3,1);
			\draw[ultra thick,violet,->,dashed] (3,1)--(4,0.5);
			\draw[ultra thick,violet,->,dashed] (3,1)--(4,1.5);
		\end{tikzpicture}
		\quad\text{ and }\quad
		\begin{tikzpicture}[guillpart,yscale=0.8,xscale=2.5]
			\fill[guillfill] (0,0) rectangle (4,4);
			\draw[guillsep] (4,0)--(4,4)--(0,4) (1,0)--(1,4) (2,0)--(2,4) (3,0)--(3,4);
			\node at (0.5,2) {$\dots$};
			\node at (1.5,2) {$\ov{Q}^{[1,\infty_\South]}$};
			\node at (2.5,2) {$\ov{Q}^{[1,\infty_\South]}$};
			\node at (3.5,2) {$\ov{Q}^{[1,\infty_\South]}$};
			\draw[ultra thick,red,->] (4,3)--(3,3);
			\draw[ultra thick,red,->] (3,3)--(2,3);
			\draw[ultra thick,red,->] (2,3)--(1,3);
			\draw[ultra thick,violet,->,dashed] (1,3)--(0.5,4);
			\draw[ultra thick,violet,->,dashed] (1,3)--(1.5,4);
		\end{tikzpicture}
	\end{equation}
	where the red full arrows correspond respectively to the successive powers of $\fold^{\West}(\ha{W}_{\South})$ and $\fold^{\South}(\ha{W}_\West)$ and the violet dashed arrows to the operators $(U_Q^\West)^*$ and $(U_Q^{\South})^*$ respectively. The horizontal and vertical half-strips can be interpreted as in \eqref{eq:westhalfplane:hypotheticallimit} and we thus expect both operators to be adjoint.
	
	Many rigorous proofs along the same line can be written. For example, one can write, for any $k,l>0$,
	\begin{align*}
		\scal{v_\South\indic{-k-1}}{ V^{\South\West}_{\searrow} v_\West \indic{-l-1}}  = \sum_{m\geq 0}\int_{S^1} f_{k,l,m}(\theta,v_\South,v_\West)\frac{d\theta}{2\pi}
	\end{align*}
	with
	\begin{align*}
		&f_{k,l,m}(\theta,v_\South,v_\West)	=
		\\
		& \scal{ U(\id+W^L_{Q,\West\East})(W^L_{Q,\West\East})^m v_\South}{
			(\id+\ha{W}_\South(\theta)) \ha{W}_\South(\theta)^k v_\West (e^{i(m-l)\theta}-e^{i(l+m+1)\theta})}
	\end{align*}	
	and a similar expression for $\scal{ V^{\South\West}_{\nwarrow},v_\South\indic{-k-1}}{ v_\West \indic{-l-1}}$.
	Using expressions of all the $W$ operators in terms of Fourier coefficients \eqref{eq:1D:link:WwithFourier} of $\phi$ functions and Schur complements, a lengthy computations provides the result.
	
	Another proof consist in studying the full quadratic form on the lattice of \eqref{eq:swcorner:hypotheticlimit} and minimizing it with the boundary conditions on North and East: the quadratic form on the minimizing harmonic function is equal to the corner quadratic form. By dividing the domain into horizontal or vertical strips as in \eqref{eq:adjunction:squareassoc}, the coupling between both sides can be shown to equal to $V^{\South\West}_{\nearrow}$ or $(V^{\South\West}_{\nearrow})^*$. The result can also be obtained in various way through representations of quadratic forms by Green functions of random walks cut along the various axis of \eqref{eq:adjunction:squareassoc} or even by a careful treatment of the infinite size limits in \eqref{eq:adjunction:squareassoc} through \cite{Bodiot}. In all the cases, the computations is related to the geometry of square associativity.
\end{proof}

\section{Complementary results}\label{sec:complementary}
\subsection{Quick reminder on Schur complements}\label{sec:complementary:Schur_Complements}
Most algebraic operations in the present paper rely on the Schur complement of lemma~\ref{lemm:Schur}. We now summarize quickly most useful properties of the Schur complement, see \cite{zhang2006schur} for more details.

Given an operator $M$ with the following block structure on the decomposition $\ca{H}=\oplus_{1\leq j\leq r}\ca{H}_j$
\[
M = \begin{pmatrix}
	M_{11}	&	M_{12} & \ldots & M_{1r}	\\
	M_{21}  &  M_{22} & \ldots & M_{2r}	\\
	\vdots  & \vdots & \ddots & \vdots \\
	M_{r1}  & M_{r2} & \ldots & M_{rr}
\end{pmatrix},
\]
the $k$-th Schur complement $\mathrm{Schur}_{\ca{H}_k}(M)$ is defined whenever $M_{kk}$ is invertible and is an operator acting on  $\ca{H}'=\oplus_{j\neq k} \ca{H}_j$ with $r-1$ blocks indexed by $\{1,\ldots,r\}\setminus\{k\}$ given by
\begin{equation}\label{eq:Schur_compl}
	\left(\mathrm{Schur}_{\ca{H}_k}(M)\right)_{ij} =  M_{ij}-M_{ik}M_{kk}^{-1}M_{kj}
\end{equation}
Without loss of generality, the blocks can be permuted so that the removed block is the last one. In this case, a unique LDU decomposition provides
\begin{equation}\label{eq:Schur:LDU}
	\begin{pmatrix}
		M_{\bullet} &  M_{\bullet k} \\
		M_{k\bullet} & M_{kk} 
	\end{pmatrix}
	=
	\begin{pmatrix}
		I &  M_{\bullet k}M_{kk}^{-1}
		\\
		0 & I
	\end{pmatrix}
	\begin{pmatrix}
		\mathrm{Schur}_{\ca{H}_k}(M) & 0 \\
		0 & M_{kk}
	\end{pmatrix}
	\begin{pmatrix}
		I & 0  \\
		M_{kk}^{-1}M_{k\bullet} & I
	\end{pmatrix}
\end{equation}
from which most results can be are derived easily. The inverse of $M$ can be written easily and one also has \begin{equation}\label{eq:Schur:det}
	\det(M) = \det(M_{kk})\det\mathrm{Schur}_{\ca{H}_k}(M)
\end{equation}
From the LDU decomposition, one also obtains that, $M$ is positive definite if and only if $M_{kk}$ and $\mathrm{Schur}_{\ca{H}_k}(M)$ are both positive definite. Moreover, we also have the following associativity property used all through the paper:
\begin{equation}\label{eq:Schurcomp:assoc}
	\mathrm{Schur}_{\ca{H}_l}\left(	\mathrm{Schur}_{\ca{H}_k} (M) \right)
	=
	\mathrm{Schur}_{\ca{H}_k}\left(	\mathrm{Schur}_{\ca{H}_l} (M) \right)
	=
	\mathrm{Schur}_{\ca{H}_l\oplus\ca{H}_k} (M)
\end{equation}
as soon as the Schur complements are well defined. In Gaussian integration, it corresponds to the successive marginalizations of a joint law and Fubini's theorem.

\subsection{Reminder on the one-dimensional Gaussian Markov processes}\label{sec:dimone:reminder}

We consider the following one-dimensional graph $\setZ$ with vertices $V_n=\{0,1,\ldots,n\}$ and edges $E_n=\{(k-1,k); 0< k \leq n\}$. Given self-adjoint positive definite  operators $(Q_{n-1,n})_{N_1 \leq n\leq N_2}$ on a Hilbert space $\ca{H}_2$ and $G_{N_1}$ and $G_{N_2}$ on $\ca{H}$, we can consider Gaussian $\ca{H}$-valued random variables $(X_{n})_{N_1\leq n\leq N_2}$ on the vertices with a  joint law has a density w.r.t. the Lebesgue measure on $\ca{H}^{n+1}$ given by:
\begin{equation}\label{eq:def:1Dproc}
	\exp\left( -\frac{1}{2} \sum_{N_1\leq k\leq N_2} \begin{pmatrix}
		x_{k-1} \\ x_k
	\end{pmatrix}^* Q_{k-1,k} \begin{pmatrix}
		x_{k-1} \\ x_k
	\end{pmatrix} -\frac{1}{2} \sum_{i\in\{N_1,N_2\}}x_{N_i}^* G_{N_i} x_{N_i} 
	\right)
\end{equation}
A straightforward consequence of Gaussian integration (see lemma~\ref{lemm:Schur}) is that the marginal law of $(x_i)_{i\in I}$ with $I\subset [N_1,N_2]$ has a density given by Schur complements of the previous global quadratic form on $\ca{H}^{N_2-N_1+1}$. As a one-dimensional Markov process, there is an associative product corresponding the marginalization w.r.t. one variable. 

\begin{prop}\label{prop:def:Schur1D}
	Let $A$ and $B$ be self-adjoint positive definite operators on $\ca{H}^2$ respectively with block structures
	\begin{align*}
		A &= \begin{pmatrix}
			A_{11} & A_{12} \\
			A_{21} & A_{22} 
		\end{pmatrix}
		&
		B &= \begin{pmatrix}
			B_{11} & B_{12} \\
			B_{21} & B_{22} 
		\end{pmatrix}
	\end{align*}
	then the operator $j_{1D}(A,B)$ on $\ca{H}^3$ defined by
	\begin{equation}
		\label{eq:def:joinoperator}
		j_{1D}(A,B) = \begin{pmatrix}
			A_{11} & A_{12} & 0 \\
			A_{21} & A_{22}+B_{11} & B_{12} \\
			0 & B_{21} & B_{22}
		\end{pmatrix} 
	\end{equation}
	is again self-adjoint positive definite and the Schur product defined by the Schur complement with respect to the second copy of $\ca{H}$
	\begin{equation}\label{eq:Schur1D:def}
		\Schur_{1D}(A,B) = \mathrm{Schur}_{\ca{H}_2}( j_{1D}(A,B) )
		= \begin{pmatrix}
			A_{11}-A_{12}T^{-1}A_{21} & -A_{12}T^{-1}B_{12} \\
			-B_{21}T^{-1}A_{21} & B_{22}-B_{21}T^{-1}B_{12}
		\end{pmatrix},
	\end{equation}
	with $T=A_{22}+B_{11}$,
	is a well-defined self-adjoint positive definite operator on $\ca{H}^2$ and is associative, i.e. 
	\[
	\Schur_{1D}( \Schur_{1D}(A_1,A_2), A_3) = \Schur_{1D}( A_1,\Schur_{1D}(A_2,A_3))
	\]
\end{prop}
This proposition is a direct consequence of the previous properties. In order to deal with boundaries, we also introduce the two following left and right action.
\begin{prop}\label{prop:Schur1D:actions}
	Let $Q$ and $G$ be self-adjoint positive definite quadratic forms respectively on $\ca{H}^2$ and $\ca{H}$. We define $\Schur_{1D}^{(L)}(G,Q)$ and $\Schur_{1D}^{(R)}(Q,G)$ by the Schur complements
	\begin{align*}
		\Schur_{1D}^{(L)}(G,Q)	&= \Big(Q+(G,0) / (Q+(G,0))_{11}\Big)
		\\
		\Schur_{1D}^{(R)}(Q,G)	&=\Big(Q+(0,G) / (Q+(0,G)_{22})\Big)
	\end{align*}
	which are again self-adjoint positive definite quadratic forms on $\ca{H}$. Moreover, we have the following action property:
	\begin{subequations}
		\label{eq:Schur:leftrightmodules}
		\begin{align}
			\Schur_{1D}^{(L)}(\Schur_{1D}^{(L)}(G,Q_1),Q_2) &= \Schur_{1D}^{(L)}(G,\Schur_{1D}(Q_1,Q_2))
			\\
			\Schur_{1D}^{(R)}(Q_1,\Schur_{1D}^{(R)}(Q_2,G)) &= \Schur_{1D}^{(R)}(\Schur_{1D}(Q_1,Q_2),G)
		\end{align}
	\end{subequations}
\end{prop}
Then, for any $N_1\leq M_1\leq M_2\leq N_2$, the marginal law of the Gaussian process on $\{M_1,\ldots,M_2\}$ is given by the subset of matrices $Q_{i-1,i}$, $M_1<i\leq M_2$ and boundary elements \begin{align*}
	G'_{M_1} &= \Schur_{1D}^{(L)}(G_{N_1},\Schur_{1D}(Q_{N_1,N_1+1},\ldots Q_{M_1-1,M_1})) \\
	G'_{M_2} &= \Schur_{1D}^{(R)}(\Schur_{1D}(Q_{M_2,M_2+1},\ldots Q_{N_2-1,N_2}),G_{N_2}) 
\end{align*}

When all operators $Q_{k,k+1}$ are equal to some operator $Q$, Perron-Frobenius eigenvectors $\ee_{G_L}$ and $\ee_{G_R}$, needed to define a infinite-volume Gibbs measure on $\setZ$ by Kolmogorov's extension theorem are then determined by
\begin{align}\label{eq:dimone:Schureigen}
	G^*_L &= \Schur_{1D}^{(L)}(G^*_L,Q) & G^*_R&= \Schur_{1D}^{(R)}(Q,G^*_R)
\end{align}
and are studied in detail in \cite{Bodiot}. 

From $Q$ alone, we may also define a Gibbs measure on $\setZ$ without using the Markov property but only the Fourier transform, as for the two-dimensional process. As a generalization of \cite{georgii} to the vector case, we may introduce the function $\phi_K: \setC^*\to \End(\ca{H})$, 
$
\phi_Q(z) = Q_{LL}+Q_{RR} + Q_{LR}z+Q_{RL}z^{-1}
$
(the function $J$ in chapter~13 of \cite{georgii}), the set of zeroes
\[
\ca{S}^{1D}_K = \{ z\in \setC^*; \det\phi_K(z)=0 \}
\]
Equations in $G^*_L$ and $G^*_R$ are non-linear due to the non-linearty of Schur complements: these difficulties, combined with the need of a $u$-dependent description in proposition \ref{prop:transfer1D:fourier} in the two-dimensional case, require a  parametrization of these operators in terms of more fundamental objects, which can be generalized to the two-dimensional situation. Most of the results in \cite{Bodiot} about these parametrizations are stated under the following assumptions.
\begin{assumptionp}{(simple1D)}
	\label{assump:1D}
	The edge operator $K$ is such that the multiplicity of each non-zero roots $z\in\ca{S}^{1D}_K$ for the polynomial $\det( z\phi_K(z) )$ is equal to $1$ and $\dim\ker \phi_K(z)=1$.
\end{assumptionp}
If they are not satisfied, then formulae have to be modified in order to incorporate higher multiplicity and degeneracy in the poles if $\phi_K(z)^{-1}$: computations are still feasible in this non-generic case but this is not the purpose of the present paper. Under assumption \ref{assump:1D}, we introduce the unique operators $W^{L}_K$ and $W^{R}_K$ on $\ca{H}$ defined by
\begin{subequations}
	\label{eq:dimone:def:W}
	\begin{align}
		W^L_K v &= \begin{cases}
			(1/z) v & \text{for all $|z|>1$ and $v\in\ker\phi_Q(z)$} \\
			0 & \text{for $v\in \ker K_{LR}$}
		\end{cases}
		\\
		W^R_K v &= \begin{cases}
			z v & \text{for all $|z|<1$ and $v\in\ker\phi_Q(z)$} \\
			0 & \text{for $v\in \ker K_{RL}$}
		\end{cases}
	\end{align}
\end{subequations}

\begin{lemm}[from \cite{Bodiot}]\label{lemm:onedim:invariantfromW}
	Under assumption~\ref{assump:1D}, the unique operators $G^*_L$ and $G^*_R$ solutions of \eqref{eq:dimone:Schureigen} are given by
	\begin{align*}
		G^*_L &= Q_{RR}+Q_{RL} W^{L}_Q
		&
		G^*_R &= Q_{LL}+Q_{LR} W^{R}_Q
	\end{align*}
\end{lemm}
We also check that the sum of the two operators are related to the zero coefficient of the Fourier transform
\begin{equation}\label{eq:1D_somme_G_L_G_R}
	G^*_L+G^*_R = \left({\Fourier}_0(\phi_Q(\cdot)^{-1}) \right)^{-1} 
\end{equation}
and the relations between the operator $W^{L|R}_Q$ and the Fourier transform are deeper. Using $f_n=\Fourier_n(\phi_Q^{-1})$, it is indeed shown in \cite{Bodiot} that, for any $k\geq 0$,
\begin{align}
	\label{eq:1D:link:WwithFourier}
	(W_Q^L)^k &=  f_k f_0^{-1} 
	&
	(W_Q^R)^k &=  f_{-k} f_0^{-1}
\end{align}
As shown in \cite{Bodiot}, it is shown that, for any $k,l\in\setZ$ with $kl\geq 0$ (same sign), one has
\begin{align}
	\label{eq:1D:FourierPhiInv:remarkable}
	f_k f_0^{-1} f_l = f_{k+l}
\end{align}
We also have the following recursive relation:
\begin{equation}\label{eq:phiK:recursion:FourierCoeffs}
	(Q_{LL} + Q_{RR}) f_n + Q_{LR} f_{n-1} + Q_{RL}f_{n+1}
	=\id \delta_{n,0}
\end{equation}
and an extensive use of these identities is made in the proof of theorems~\ref{theo:hs:fixedpoint:formulae} and \ref{theo:corner:fixedpoint:formulae}.

\subsection{Operadic structure of quadratic forms}\label{sec:moreonguill}

\subsubsection{Strip structure: pairings and shifts.}

The fixed point equations of section~\ref{sec:fixedpointeqs} require only the detailed structure of half-strips and corners. However, in order to take into account associativity diagrams of the type
\begin{align*}
	\begin{tikzpicture}[guillpart,yscale=1.1,xscale=1.3,baseline=0.25cm]
		\fill[guillfill] (0,0) rectangle (4,1);
		\draw[guillsep] (0,0)--(4,0) (0,1)--(4,1) (1.5,0)--(1.5,1) (2.75,0)--(2.75,1) ;
		\node at (0.75,0.5) {$1$};
		\node at (2.125,0.5) {$2$};
		\node at (3.375,0.5) {$3$};
		\node (P1) at (3.5,0) [circle, fill, inner sep=0.5mm] {};
		\node (P2) at (3.5,1) [circle, fill, inner sep=0.5mm] {};
		\draw[thick, dotted] (P1)--(P2);
		\draw[->] (2.75,1.2)-- node [midway, above] {$s$} (3.5,1.2);
	\end{tikzpicture}_\ca{Q}
	&= 
	\begin{tikzpicture}[guillpart,yscale=1.1,xscale=1.3,baseline=0.25cm]
		\fill[guillfill] (0,0) rectangle (4,1);
		\draw[guillsep] (0,0)--(4,0) (0,1)--(4,1) (2.75,0)--(2.75,1) ;
		\node at (1.375,0.5) {$1'$};
		\node at (3.375,0.5) {$2'$};
		\node (P1) at (3.5,0) [circle, fill, inner sep=0.5mm] {};
		\node (P2) at (3.5,1) [circle, fill, inner sep=0.5mm] {};
		\draw[thick, dotted] (P1)--(P2);
		\draw[->] (2.75,1.2)-- node [midway, above] {$s$} (3.5,1.2);
	\end{tikzpicture}_\ca{Q}\left(
	\begin{tikzpicture}[guillpart,yscale=1.1,xscale=1.3,baseline=0.25cm]
		\fill[guillfill] (0,0) rectangle (3,1);
		\draw[guillsep] (0,0)--(3,0)--(3,1)--(0,1) (2,0)--(2,1);
		\node at (1,0.5) {$1$};
		\node at (2.5,0.5) {$2$};
	\end{tikzpicture}_{\ca{Q}}
	,
	\id_3
	\right)
	\\
	&= 
	\begin{tikzpicture}[guillpart,yscale=1.1,xscale=1.3,baseline=0.25cm]
		\fill[guillfill] (0,0) rectangle (4,1);
		\draw[guillsep] (0,0)--(4,0) (0,1)--(4,1) (1.5,0)--(1.5,1)  ;
		\node at (0.75,0.5) {$1'$};
		\node at (2.25,0.5) {$2'$};
		\node (P1) at (3.5,0) [circle, fill, inner sep=0.5mm] {};
		\node (P2) at (3.5,1) [circle, fill, inner sep=0.5mm] {};
		\draw[thick, dotted] (P1)--(P2);
		\draw[->] (1.5,1.2)-- node [midway, above] {$p+s$} (3.5,1.2);
	\end{tikzpicture}_\ca{Q}\left(
	\id_1, 
	\begin{tikzpicture}[guillpart,yscale=1.1,xscale=1.3,baseline=0.25cm]
		\fill[guillfill] (0,0) rectangle (2.5,1);
		\draw[guillsep] (2.5,0)--(0,0)--(0,1)--(2.5,1) (1,0)--(1,1); 
		\node at (0.5,0.5) {$2$};
		\node at (2,0.5) {$3$};
	\end{tikzpicture}_{\ca{Q}}
	\right)
\end{align*}
designed in \cite{Simon}, we require the following minimal definition.
\begin{defi}[shift-with-pairing property]\label{def:shiftpairingprop}
	Let $\ca{H}$ be a Hilbert space. Let $\ca{W}_L$ and $\ca{W}_R$ be Hilbert spaces with, respectively, the left and the right $\ca{H}$-shift property. Let $\ca{W}_{LR}$ be a Hilbert space with an action $(\tau_p)$ of $\setZ$-translations. The triplet $(\ca{W}_L,\ca{W}_R,\ca{W}_{LR})$ satisfy the shift-with-pairing property if there exists an isometry $h:\ca{W}_L\oplus\ca{W}_R \to\ca{W}_{LR}$ such that
	\[
	\tau_s \circ h\left( D_p^L(w_L,h) , w_R \right) = \tau_{s+p} \circ h\left( w_L , D_{p}^R(h,w_R) \right)
	\]
\end{defi}
This definition ensures the necessary identifications in guillotine partitions as in  $\rho_0$ but with a infinite line in at least one dimension and the corresponding pointings. 

\subsubsection{Proof of the guillotine structure theorem}\label{sec:proofguillot}

\begin{proof}[Proof of theorem~\ref{theo:guillstruct}]
	From \cite{Simon}, it is enough to show the following steps:
	\begin{itemize}
		\item for any elementary guillotine partition of the type
		\begin{equation}\label{eq:typeguill}
			\begin{tikzpicture}[guillpart,yscale=1.5,xscale=1.]
				\fill[guillfill] (0,0) rectangle (3,2);
				\draw[guillsep,dashed] (0,0)--(3,0)--(3,2)--(0,2)--(0,0);
				\draw[guillsep] (0,1)--(3,1);
				\node at (1.5,0.5) {$Q_1$};
				\node at (1.5,1.5) {$Q_2$};
			\end{tikzpicture}
			\qquad\text{or}\qquad
			\begin{tikzpicture}[guillpart,yscale=1,xscale=1.5,rotate=-90]
				\fill[guillfill] (0,0) rectangle (3,2);
				\draw[guillsep,dashed] (0,0)--(3,0)--(3,2)--(0,2)--(0,0);
				\draw[guillsep] (0,1)--(3,1);
				\node at (1.5,0.5) {$Q_1$};
				\node at (1.5,1.5) {$Q_2$};
			\end{tikzpicture}
		\end{equation}
		where each of the four boundary sides is full or absent depending on the type of boundary or rectangles, the product is well defined between suitable spaces $\ca{Q}_{p_1,q_1}$ and $\ca{Q}_{p_2,q_2}$ with compatible sizes and takes values in the suitable $\ca{Q}_{p',q'}$. It is enough to show that the Schur complements are well-defined under the boundedness assumptions on the quadratic forms and satisfy themselves the same type of boundedness assumptions. This is done in lemma~\ref{lemm:Schurbounds}
		
		\item the three fundamental associavities ---horizontal, vertical and square--- are satisfied: this is direct consequence of \eqref{eq:Schurcomp:assoc} since the three-part or four-part guillotine partitions of associativies corresponds to the same Schur complements taken in different orders. 
	\end{itemize}
	
	We now proove the key analytic lemma for the first step. For any bounded operator $K$ on a Hilbert space $\ca{H}$, we define
	\[
	m_K =\inf_{x\neq 0} \frac{\scal{x}{Kx}}{\scal{x}{x}}
	\]
	
	\begin{lemm}\label{lemm:Schurbounds}
		Let $A$ and $B$ be two operators respectively on $\ca{H}_1\oplus\ca{H}_2$ and $\ca{H}_2\oplus\ca{H}_3$ where the $\ca{H}_i$ are three Hilbert spaces such that $A$ and $B$ are bounded and satisfy $m_A>0$ and $m_B>0$. Then the operator $j(A,B)$ on the orthogonal sum $\ca{H}_1\oplus\ca{H}_2\oplus\ca{H}_3$ defined as \eqref{eq:def:joinoperator} is bounded and satisfy $m_{j(A,B)}> \min(m_A,m_B)$.
		Moreover, the Schur complement $C=\mathrm{Schur}_{\ca{H}_2}(j(A,B))$ is well-defined, bounded and satisfy again $m_C>0$.
	\end{lemm}
	\begin{proof}
		The bounds on $j(A,B)$ are easy from the definition of $j$ by writing $x=(x_1,x_2,x_3)$. By restriction, the block $J_2=A_{22}+B_{11}$ is also bounded and satisfy $m_{J_2}\geq m_J>0$ and is thus invertible: the Schur complement is well defined and we have:
		\[
		j(A,B) =  \begin{pmatrix} 
			I & L^* \\ 0 & I
		\end{pmatrix}\begin{pmatrix}
			C & 0 \\
			0 & J_2
		\end{pmatrix} \begin{pmatrix} 
			I &0 \\ L & I
		\end{pmatrix}
		\] with bounded operators $L$, $C$ and $J_2$. From the triangular structure, one obtains $\scal{x}{Cx}\geq \min(m_A,m_B)||(x,-Lx)||^2$ and $m_C>0$.
	\end{proof}
	For any guillotine partition of type \eqref{eq:typeguill}, we then consider $\ca{H}_1$ as the orthogonal sum of the spaces in the external boundaries of $Q_1$, $\ca{H}_2$ the space associated to the cut in the partition and $\ca{H}_3$, the orthogonal sum of the spaces in the external boundaries of $Q_2$ and then use this lemma to obtain the guillotine structure of all the the spaces $\ca{Q}_{a,b}$ introduced here.
\end{proof}

\subsection{Eigenvalues from renormalization}\label{sec:eigenval}

All the computations of Gaussian densities have been lifted, using proposition~\ref{prop:fromweightstoquadraticforms}, at the level of the parameter space $\ca{Q}_{p,q}$ using Schur complements in place of (measure-theoretical) Gaussian integration.

There is no reference measure on the spaces $\ca{W}_{a}$ and $\ca{W}_{ab}$ due to the infinite dimension and products cannot be defined simply by integration of densities on spaces of densities. However, at the level of quadratic forms, no problem occurs. In order to come back to probability laws at the end in order to have practical computations, we still have to extend our computations on quadratic forms in \emph{all} the spaces $\ca{Q}_{\bullet,\bullet}$ (boundaries included) to \emph{all} the spaces $\setR\times \ca{Q}_{\bullet,\bullet}$ where the first element encodes a suitable renormalization of "densities" using generalization of the cocycles $\gamma_{ab}$ in \eqref{eq:horizSchur} and \eqref{eq:vertSchur}.

\paragraph*{From cylinders to strips}  The interested reader may acquire an intuition of these cocycles for strips, half-strips and corners by computing by hand the cocycles as finite products in the context of the cylinders discussed in section~\ref{sec:cylindertostrip} and sending $p\to\infty$ and considering $p$-th roots of the cycles: all the finite products then becomes exponential of Riemann sums that converge to suitable eigenvalues such as \eqref{eq:eigenval:expression}. Detailed computation are presented in the thesis \cite{BodiotThesis}.

\paragraph*{A renormalization approach}

We will now see how to relate the eigenvalue $\Lambda$ of \eqref{eq:eigenval:expression} to the computation of the term
\[
\begin{tikzpicture}[guillpart,yscale=1.15,xscale=1.3]
	\fill[guillfill] (0,0) rectangle (3,1);
	\draw[guillsep] (0,0)--(3,0)--(3,1)--(0,1) (2,0)--(2,1);
	\node at (2.5,0.5) {$Q$};
	\node at (1,0.5) {$\ov{Q}^{[\infty_\West,1]}$};
\end{tikzpicture}
\]
with Schur complements that correspond to Gaussian integration.
The expression~\eqref{eq:eigenval:expression} of $\Lambda$ involves the determinant of the function $\Psi_Q$: using \eqref{eq:Schur:det}, this can be rewritten as
\[
\det \Psi_Q(u_1,u_2) = \det \phi_Q^{\West\East}(u_1) \det \mathrm{Schur}_{\ca{H}_2}(\Psi_Q(u_1,u_2))
\]
and thus the eigenvalue factorizes into $\Lambda = \Lambda^{1D}_{\West\East} \Lambda'$ where $\Lambda^{1D}_{\West\East}$ is the one-dimensional eigenvalue associated to $Q_{[\West\East],[\West\East]}$ and $\Lambda'$ is given by
\begin{equation}
	\label{eq:transverseeigenval}
	\Lambda' = (2\pi)^{d_1}\exp\left(-\frac{1}{4\pi^2} \int_{S_1\times S_1}
	\log \det \ca{S}(e^{i\theta_1},e^{i\theta_2}) d\theta_1 d\theta_2
	\right)
\end{equation}
with $\ca{S}(u)$ given by
\[
\ca{S}(u_1,u_2)  = \phi_Q^{\South\North}(u_2) -\Psi_Q(u_1,u_2)_{1,2}\phi_Q^{\West\East}(u_1)^{-1} \Psi_Q(u_1,u_2)_{2,1}
\]
The one-dimensional part $\Lambda^{1D}_{\West\East}$ is easily understood: it corresponds to the eigenvalue of the 1D process on vertical edges on the strips with zero boundary conditions on the horizontal edges and can be deduced from the 1D results of \cite{Bodiot}. In particular, it corresponds to the Gaussian integration over the vertical cut in the previous guillotine partition, which produces a normalization factor
\[
\Lambda^{1D}_{\West\East}=(2\pi)^{d_2} \det\left( \ov{Q}^{[\infty_\West,1]}_{\East\East} + Q_{\West\West}\right)^{-1} =\gamma_{\West\East}(\ov{Q}^{[\infty_\West,1]},Q)
\]
for the cocycle described in \eqref{eq:horizSchur}. 

The second part $\Lambda'$ is more involved and is related to the morphism property of the half-strip fixed point $\ov{Q}^{[\infty_\West,1]}$ under the shift maps $D_1^L$. To do so, we need to understand the renormalized mass carried by the half-strip. In the transverse direction, this half-strip element can be glued with a corner element along a half-line: if traditional Gaussian integration would hold, this would produce a factor $(2\pi)^{d_1\times \infty}\det( C^{\South\West}_{\North,\North}+\ov{Q}^{[\infty_\West,1]}_{\South,\South})$ but this expression is ill-defined since $\ca{W}_{\West}$ is infinite-dimensional. Renormalization is required as for the strips above. To do so, we way truncate the spaces $\ca{W}_{\West}$ at order $n$, i.e. consider the sequence of smaller spaces $l^2(\{-n,-n+1,\ldots,-1\};\ca{H}_1)\simeq \ca{H}_1^n$ and consider the corresponding truncation of
\[
\ca{R}_n(\ov{Q}^{[\infty_\West,1]}_{[\South\North],[\South\North]}) = \toep^{\West}_n(\ha{Q}^{\infty_{\West\East}})-\hank^{\West}_n(\ha{Q}^{\infty_{\West\East}})
\]
These truncated blocks define a sequence of vertical dynamics on the truncated half-line space $\ca{H}_1^n$ with its own $1D$ eigenvalue as studied in \cite{Bodiot} with the eigenvalue
\[
\Lambda^{1D}_{\ca{R}_n(\ov{Q}^{[\infty_\West,1]}_{[\South\North],[\South\North]})} = (2\pi)^{n d_1} \exp\left(
-\frac{1}{2\pi} \int_0^{2\pi} \log \det \phi^{1D}_{\ca{R}_n(\ov{Q}^{[\infty_\West,1]}_{[\South\North],[\South\North]})}(e^{i\theta}) d\theta
\right)
\]
Now, in the parameter space $\setR\times \ca{Q}_{\infty_\West,1}$, we the consider the renormalized element $(1/\Lambda^{1D}_{\ca{R}_n(\ov{Q}^{[\infty_\West,1]}_{[\South\North],[\South\North]})}, \ca{R}_n(\ov{Q}^{[\infty_\West,1]}))$
However, we have seen that elements $\ov{Q}^{[\infty_\West,1]}$ are fixed points up to shift morphisms $D_1^L$ but this shift exactly produce a shift of $1$ in the truncation order. Thus, a careful description of the renormalization of the masses along the sequence of subspaces $l^2(\{-n,-n+1,\ldots,-1\};\ca{H}_1)$ produces an additional factor
\begin{equation}\label{eq:lambdaasrenormalizedratio}
	\Lambda' = \lim_{n\to\infty} \frac{\Lambda^{1D}_{\ca{R}_{n+1}(\ov{Q}^{[\infty_\West,1]}_{[\South\North],[\South\North]})}}{\Lambda^{1D}_{\ca{R}_n(\ov{Q}^{[\infty_\West,1]}_{[\South\North],[\South\North]})}}
\end{equation}

We still have to show how this limit is related to the expected expression \eqref{eq:transverseeigenval}. To do so, we treat separately the Toeplitz and the Hankel part, in the following way,
\[
\phi^{1D}_{\ca{R}_n(\ov{Q}^{[\infty_\West,1]}_{[\South\North],[\South\North]})}(e^{i\theta}) 
= \phi^{1D}_{\toep^{\West}_n(\ha{Q}^{\infty_{\West\East}})}(e^{i\theta})
\left(
\id_{\ca{H}_1} - \phi^{1D}_{\toep^{\West}_n(\ha{Q}^{\infty_{\West\East}})}(e^{i\theta})^{-1}\phi^{1D}_{\hank^{\West}_n(\ha{Q}^{\infty_{\West\East}})}(e^{i\theta})
\right)
\]
which produces a factorization of the 1D eigenvalue into two terms
\[
\begin{split}
	\Lambda^{1D}_{\ca{R}_n(\ov{Q}^{[\infty_\West,1]}_{[\South\North],[\South\North]})} = &
	\Lambda^{1D}_{\toep^{\West}_n(\ha{Q}^{\infty_{\West\East}})}
	\\
	& \times e^{-\int_0^{2\pi} \log \det\left(
		\id_{\ca{H}_1} - \phi^{1D}_{\toep^{\West}_n(\ha{Q}^{\infty_{\West\East}})}(e^{i\theta})^{-1}\phi^{1D}_{\hank^{\West}_n(\ha{Q}^{\infty_{\West\East}})}(e^{i\theta})
		\right) \frac{d\theta}{2\pi}}
\end{split}
\]
The determinantal term in the exponential is a Fredholm determinant since the Hankel part can be seen as a Hilbert-Schmidt perturbation of the Toeplitz part (see the Fourier coefficients): we thus expect the exponential to converge to a finite contribution $\Lambda''$, which becomes irrelevant in the asymptotic ratio \eqref{eq:lambdaasrenormalizedratio}. From the definition of the operator $\ov{Q}^{\infty_{\West\East}}$, we now identify directly
\[
\phi^{1D}_{\toep^{\West}_n(\ha{Q}^{\infty_{\West\East}})}(e^{i\theta}) = \toep^\West_{n} \left( \mathrm{Schur}_{\ca{H}_2}\Psi_Q(\bullet,e^{i\theta}) \right)
\]
where the Toeplitz is applied to the $\bullet$ part of the function at fixed $e^{i\theta}$. Using now Szeg\"o's limit formula \cite{szego1915grenzwertsatz, bottcher2013analysis}, we now obtain 
\[
\lim_{n\to\infty} \tfrac{\det \toep^\West_{n+1} \left( \mathrm{Schur}_{\ca{H}_2}\Psi_Q(\bullet,e^{i\theta}) \right)}{\det \toep^\West_{n} \left( \mathrm{Schur}_{\ca{H}_2}\Psi_Q(\bullet,e^{i\theta}) \right)}
= e^{
	\int_0^{2\pi}
	\log\det\left(
	\mathrm{Schur}_{\ca{H}_2}\Psi_Q(e^{i\phi},e^{i\theta})
	\right) \frac{d\phi}{2\pi}
}
\]
so that the ratio \eqref{eq:lambdaasrenormalizedratio} converges to the expression \eqref{eq:transverseeigenval} of $\Lambda'$.


\begin{thebibliography}{1}
	
	\bibitem{BodiotThesis}
	Emilien Bodiot.
	\newblock {\em Conditions aux bords pour les champs gaussiens markoviens
		discrets: une approche op{\'e}radique}.
	\newblock PhD thesis, Sorbonne Universit{\'e}, 2023, \url{https://theses.hal.science/tel-04607320/}.
	
	\bibitem{Bodiot}
	Emilien Bodiot.
	\newblock One-dimensional discrete {Gaussian} {Markov} processes: Harmonic
	decomposition of invariant boundary conditions.
	\newblock {\em accepted in S{\'e}minaire de Probabilit{\'e}s,Lecture Notes in
		Mathematics, Springer}, 2023, \url{https://arxiv.org/abs/2305.18892}.
	
	\bibitem{bottcher2013analysis}
	Albrecht B{\"o}ttcher and Bernd Silbermann.
	\newblock {\em Analysis of Toeplitz operators}.
	\newblock Springer Science \& Business Media, 2013.
	
	\bibitem{friedlivelenik}
	Sacha Friedli and Yvan Velenik.
	\newblock {\em Statistical Mechanics of Lattice Systems}.
	\newblock Cambridge University Press, 2017.
	
	\bibitem{georgii}
	Hans-Otto Georgii.
	\newblock {\em Gibbs Measures and Phase Transitions}.
	\newblock Studies in Mathematics. de Gruyter, second edition, 2011.
	
	\bibitem{Simon}
	Damien Simon.
	\newblock Operadic approach to {Markov} processes with boundaries on the square
	lattice in dimension 2 and larger.
	\newblock {\em submitted}, 2023, \url{https://arxiv.org/abs/2306.11126}.
	
	\bibitem{szego1915grenzwertsatz}
	Gabor Szegő.
	\newblock Ein grenzwertsatz {\"u}ber die toeplitzschen determinanten einer
	reellen positiven funktion.
	\newblock {\em Mathematische Annalen}, 76(4):490--503, 1915.
	
	\bibitem{zhang2006schur}
	Fuzhen Zhang.
	\newblock {\em The Schur complement and its applications}, volume~4.
	\newblock Springer Science \& Business Media, 2006.
	
\end{thebibliography}

\end{document}